\pgfplotsset{compat=1.11}
\newcommand{\R}{\mathbb{R}}
\newcommand{\N}{\mathbb{N}}
\newcommand{\Z}{\mathbb{Z}}
\newcommand{\F}{\mathcal{F}}
\newcommand{\M}{\mathcal{M}}
\newcommand{\HM}{\mathcal{H}}
\newcommand{\LL}{\mathcal{L}}
\newcommand{\LM}{\mathscr{L}}
\newcommand{\T}{\mathcal{T}}
\newcommand{\PP}{\mathcal{P}}
\newcommand{\p}{\rho}
\newcommand{\RH}{Rankine-Hugoniot }
\newcommand{\1}{\mathds{1}}
\newtheorem{teo}{Theorem}[section]
\newtheorem{prop}[teo]{Proposition}
\newtheorem{lem}[teo]{Lemma}
\theoremstyle{definition}
\newtheorem{defi}[teo]{Definition}
\newtheorem{rmk}[teo]{Remark}
\title{Wave-front tracking for a quasi-linear scalar conservation law with hysteresis II: the case of Preisach}
\author{Fabio Bagagiolo\footnote{Department of Mathematics, University of Trento, Italy, fabio.bagagiolo@unitn.it}\ \ and Stefan Moreti\footnote{Department of Mathematics, University of Trento, Italy, stefan.moreti@unitn.it}}
\date{}
\begin{document}
\maketitle

\begin{abstract}
We consider the Cauchy problem for the quasi-linear scalar conservation law \[u_t+\F(u)_t+u_x=0,\] where $\F$ is a specific hysteresis operator. 
Hysteresis models a rate-independent memory relationship between the input $u$ and its output, giving a non-local feature to the equation.

In a previous work the authors studied the case when $\mathcal{F}$ is the Play operator. In the present article, we extend the analysis to the case of Preisach operator, which is probably the most versatile mathematical model to describe hysteresis in the applications, especially for the presence of some kind of internal variables. This fact has required a new analysis of the equation. 

Starting from the Riemann problem, we address the so-called wave-front tracking method for a solution to the Cauchy problem with bounded variation initial data. An entropy-like condition is also exploited for uniqueness. 
\end{abstract}


\section*{Introduction}\label{S0}
In the recent paper \cite{BFMS} the authors study the Cauchy problem for a conservation law with hysteresis nonlinearities as follows:
\begin{equation}\label{eq: intro1}
    \begin{cases}
        u_t+w_t+u_x=0 &\quad \text{on } \R \times [0,T),\\
        w(x,t)={\cal F}[u(x,\cdot),w^0(x)](t)&\quad \forall\ t\in[0,T),\ \text{a.e. } x\in\R\\
        u(x,0)=u_0(x) & \quad \text{in } \R, \\ w(x,0)=w_0(x) & \quad \text{in } \R.
    \end{cases}
\end{equation}

\noindent
where: $\cal F$ is a so-called hysteresis operator, namely representing a memory-dependent input-output relationship between the pair of scalar functions $(t\mapsto u(x,t),t\mapsto w(x,t))$, one for almost every $x$; $u_0$ is the initial datum for the solution $u$; $w^0$ is a suitable space-dependent function for the initial values of the output $w$. The memory dependence represented by $\cal F$ is rate-independent, which is the main characterization of the hysteresis phenomena. The main goal was, starting from the Riemann problem, to give an existence and uniqueness result for \eqref{eq: intro1} for general $BV$ (bounded variation) initial data, by applying the so called wave-front tracking method.

In \cite{BFMS} the case where $\cal F$ is the Play operator is considered. In the present paper we study the case where $\cal F$ is the Preisach operator which is more versatile and useful to analytically represent many hysteresis phenomena from the applications.

As already pointed out in \cite{BFMS}, the presence of the hysteretic (memory) term in \eqref{eq: intro1} gives a non-local feature to the PDE which requires to re-write the equation in the (more standard) form $u_t+f(u)u_x=0$, with $f=F'$, for a suitable definition of a flow function $F$ in order to encode the memory hysteretic effect (see \eqref{eq: flux}). The main novelties of this kind of problem are in the definition of the function $f$ which also in general depends on the output $w$ (which encodes the memory) and presents several non-standard singularities and discontinuities. 

When the hysteresis relationship is represented by the Play operator, as in \cite{BFMS}, the function $f$ above becomes of piecewise constant type which requires, for the Riemann problem,  a fine analysis and combination of shocks. In the present paper, with the Preisach hysteresis, $f$ turns out to be piecewise continuous and non-constant. This fact gives a big difference with respect to the previous case, because besides shocks one has the presence of several combinations of rarefaction waves, and this also reflects in the wave-front tracking procedure.

One of the main differences between Play and Preisach operators is that the second one takes account of the evolution of a family of so-called internal variables (which is a common feature of many real-world hysteresis phenomena). The evolution of such internal variables is encoded by the evolution of a switching function defined in a half plane (the Preisach plane) or equivalently by the evolution of a maximal antimonotone graph on that half plane viewed as the evolution in a suitable metric space. Hence, for the wave-front tracking procedure, in case of $BV$ data, a suitable definition and analysis of BV functions with values in a metric space must be considered.

As a consequence of the above discussion, the construction of a solution via the wave-front tracking method in this work involves several non-standard difficulties:
\begin{enumerate}[i)]
    \item unlike in the classical scalar case and in the Play operator case studied in \cite{BFMS}, interactions between rarefaction waves are present in the wave-front tracking;
    \item as in \cite{BFMS}, the function $u$ solving the Cauchy problem belongs only to BV. Hence, we need a relaxation of the hysteresis relationship, rewriting it as a suitable measure-dependent variational integral inequality (adapting the one in Visintin \cite{AVH});
    \item we are required to work in a suitable metric space of antimonotone graphs, which introduces additional difficulties in ensuring compactness and convergence of the wave-front tracking scheme and the relaxed hysteresis relationship.
\end{enumerate}
Our main results are the existence of a solution of \eqref{eq: intro1} and the uniqueness in the class of functions satisfying an entropy condition. 

A similar equation to \eqref{eq: intro1} but with a different hysteresis operator, namely the delayed Relay, was studied \cite{AVH1} by Visintin, where it is remarked that the results can be extended to the Preisach case. However, the approach followed there differs from that of \cite{BFMS} and from the one in the present article. Here, we adopt a more constructive approach focused on the study of characteristics and their interactions. 

Hysteresis is a phenomenon commonly observed in various natural and engineered systems, typically characterized by a lag or delay in the system’s response to changes in the input. Classical examples include the behavior of ferromagnetic materials, the stress-strain relation in plasto-elastic materials, and the behavior of thermostats. For comprehensive accounts of mathematical models for hysteresis and their use in connection with PDEs, we refer the reader to Krasnoselskii and Pokrovskii \cite{CORR1} and Visintin \cite{AVH}.

Hyperbolic and scalar conservation laws with hysteresis have been investigated in various applied contexts in works such as \cite{Simile}, \cite{ZHANG}, \cite{marchesin}, \cite{KOP1}, \cite{KOR1}, \cite{CF1}, \cite{MR}, \cite{CF2}, \cite{CF3}. Some of these works include an analysis of wave propagation and characteristics but no one of them uses a wave-front tracking approximation and its limit procedure to prove the existence of solutions. This indeed seems to be a novelty of our analysis, particularly for what concerns the passage to the limit in the hysteresis relationship. We refer to the Introduction of \cite{BFMS} for a brief discussion on the areas of interest and applications for some of those works. 

A wave-front tracking procedure is actually discussed in Fan \cite{F1} raising some questions about the convergence of the wave-front tracking limit concerning the hysteresis relation. Recently, Amadori, Bressan, and Shen in \cite{ADBA1}, \cite{ADBA2} study, using also wave-front tracking methods, a scalar conservation law in which the flux depends discontinuously on the spatial derivative $u_x$, with possible applications to  traffic flow with some kind of hysteresis effects.

For general basic theory on scalar conservation laws, we refer to the textbook by Evans \cite{EV}. For more specific results about the classical wave-front tracking method we refer to the books by Bressan \cite{AB3}, Holden and Risebro \cite{HH}, and the seminal paper by Dafermos \cite{DCP}.

The article is structured as follows. In Section \ref{S1} we introduce the Relay operator and its suitable extensions. Then in Section \ref{S2} instead, we present the Preisach operator $\F$ that appears in the PDE \eqref{eq: intro1}. In Section \ref{S3} we introduce a suitable formulation of \eqref{eq: intro1} and in Section \ref{S4} we study the corresponding Riemann problem. In Section \ref{S5} we prove existence of weak solutions for the case of $BV$ initial data, in particular we perform the limit in the wave-front tracking procedure. In Section \ref{S6} we give an entropy condition, showing that the solution constructed in the previous section is the only entropy solution. In the Appendix \ref{A1} we state some results about BV functions with values in metric spaces.


\numberwithin{equation}{section}


\section{The Relay Operator for hysteresis and its extensions}\label{S1}

Figure \ref{fig: Relay} represents the so-called input-output Relay hysteresis relationship between a time-dependent continuous scalar input $u$ and a time dependent scalar output $z.$ Let us fix a couple $(\p_1,\p_2)=:\p$, with $\p_1<\p_2$ and denote by \begin{equation}\label{eq: Lrho}
\LL_\p=\{ (u,z)\,|\,u > \p_1,\, z= +1\} \cup \{(u,z)\,|\, u < \p_2,\, z= -1\} 
\end{equation}
the so called hysteresis region, which is the region in $\R^2$ where the couple $(u(t),z(t))$ must belong. If at a certain time $t$, the couple $(u(t),z(t))$ is such that $z(t)=+1$, then certainly $u(t) > \p_1$, moreover, if the input changes in time, then the output will not change until $u$ possibly reaches the lower threshold $\p_1$. Once that threshold is reached then $z$ switches value from $+1$ to $-1$. If instead at some time $t$ the couple $(u(t),z(t))$ is such that $z(t)=-1,$ then certainly $u(t)<\rho_2$ and $z$ remains constant in time until $u$ possibly reaches the upper threshold $\p_2$. In that case $z$ at that time becomes $-1$.
\par 

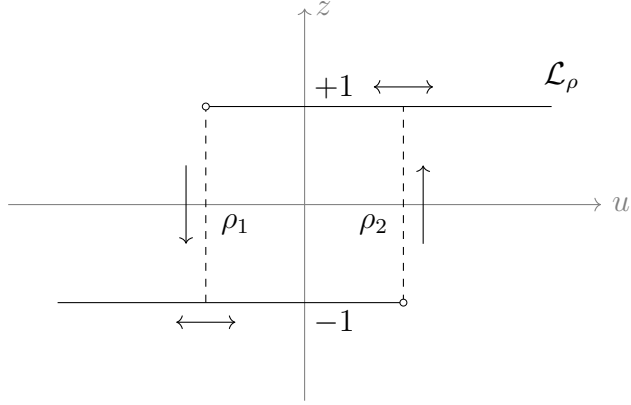
\begin{figure}
    \centering
    \begin{tikzpicture}[scale= 1.3]
        \draw[->, gray] (-3, 0) -- (3, 0) node[right] {$u$}; 
    \draw[->, gray] (0, -2) -- (0, 2) node[right] {$z$};
    \node at (0.7,-0.2) {$\rho_2$} ;
    \node at (-0.7,-0.2) {$\rho_1$} ;
    \node at (0.3,-1.2) {$-1$} ;
    \node at (0.3,1.2) {$+1$} ;
    \node at (2.6,1.3) {$\LL_\p$} ;
    \draw[-](-2.5,-1)--(1,-1);
    \draw[-](2.5,1)--(-1,1);
    \draw[-,dashed](-1,-1)--(-1,1);
    \draw[-,dashed](1,-1)--(1, 1);
    \draw[<->] (0.7,1.2)--(1.3,1.2);
    \draw[<->] (-0.7,-1.2)--(-1.3,-1.2);
    \draw[->] (1.2,-0.4)--(1.2,0.4);
    \draw[->] (-1.2,0.4)--(-1.2,-0.4);

    \filldraw [color=black, fill=white, ultra thin] (-1, 1) circle (1pt);
    \filldraw [color=black, fill=white, ultra thin] (1, -1) circle (1pt);
    \end{tikzpicture}
    \caption{Delayed Relay operator.}
    \label{fig: Relay}
\end{figure} 

Given a continuous input $u\in C^0([0,T])$ and an initial datum $z_0$, for the output, such that $(u(0),z_0)\in \LL_\p$, the previous heuristic argument can be made analytically rigorous defining then a unique output $z:[0,T]\to\mathbb{R}$, see \cite[Chapter~IV]{AVH}.

We will use the following notation $z=[h^\p(u,z_0)]$ to denote the output $z$ with initial state $z_0$ to the Relay operator with threshold $\p=(\p_1,\p_2)$ generated by input $u$. Sometimes we will highlight the dependence of $z$ from $\p$ by writing $z(\cdot\,; \p):[0,T] \to \{-1,1\}.$\par

Still referring to \cite{AVH}, for fixed $\p=(\p_1,\p_2)$, the Relay operator acts as follows
\[\begin{split}
h^\p: ~& \tilde{\cal L_\p}\to L^\infty(0,T)\cap BV([0,T])\cap PC(0,T)\\&(u(\cdot),z_0)\to [h^\p(u,z_0)](\cdot)=:z(\cdot) \end{split}\]
where $PC(0,T)$ is the space of piecewise constant functions and

\[
\tilde{\cal L_\p}=\left\{(u,z_0)\in C^0([0,T])\times\R\,\Big|\,(u(0),z_0)\in{\cal L_\p}\right\},
\]

\noindent
which satisfies the following properties (and in particular it is a hysteresis operator)
\begin{enumerate}[i)]
    \item \textit{Causality}: $\forall~(u_1,z_0), (u_2,z_0)$, such that $u_1=u_2$ in $[0,t]$ then 
    \[[h^\p(u_1,z_0)](t)=[h^\p(u_2,z_0)](t).\]
    \item \label{rateindep}\textit{Rate-independence}: $\forall~(u,z_0)$, $\forall ~t\in [0,T]$ if $s:~ [0,T]\to [0,T]$ is an increasing homeomorphism, then \begin{equation}\label{eq: rateindependence}[h^\p(u\circ s, z_0)](t)=[h^\p(u,z_0)](s(t)).\end{equation}
    \item \textit{Semigroup property}: $\forall~(u,z_0)$, $\forall ~t_1<t_2\in [0,T]$ setting $z(t_1):=[h^\p(u,z_0)](t_1)$ then we have that \begin{equation}\label{eq: semigroup}
    [h^\p(u,z_0)](t_2)=[h^\p(u(t_1+\cdot),z(t_1))](t_2-t_1).\end{equation}
\end{enumerate}

In the following, we will need to consider $h^\p$ as applied to $u$, with $u$ solution to a Riemann problem for a conservation law. Hence we have to extend the definition of $h^\p$ to piece-wise constant inputs with a finite number of jumps. Such an extension is the same as done in \cite{BFMS}: we consider a sequence of continuous functions $u_\varepsilon$ that fill the discontinuities of $u$ in a monotone way and such that $u_\varepsilon \to u$ pointwisely; then we define $z:=\lim_{\varepsilon \to 0} [h^\p(u_\varepsilon,z_0)]$ (see Figure \ref{fig: Relayapprox} for a specific example). In particular, such a construction of the output $z$, as function in $L^1(0,T)$, is independent of how we monotonically fill the jump of the input $u$ because of the rate-independence property \ref{rateindep}) above. 
\par

\begin{figure}
    \centering
    \begin{tikzpicture}[scale=0.8]
         \draw[->, gray, yshift=1cm] (-0.5, 0) -- (4, 0) node[right] {$t$}; 
        \draw[->, gray,yshift=1cm] (0, -0.5) -- (0, 2.5) node[right] {$u$};

        \draw[-,yshift=1cm](-0.1,1.5)--(0.1,1.5);
        \node[text = black, below] (r) at (-0.3,2.6) {\scriptsize{$\p_2$}};

        \draw[-,yshift=1cm](-0.1,-0.3)--(0.1,-0.3);
        \node[text = black, below] (r) at (-0.3,0.7) {\scriptsize{$\p_1$}};
        
        \draw[-,thick,yshift=1cm] (0,0)--(1.5,0);
        \draw[-,thick,yshift=1cm] (1.5,2)--(3,2);

        \draw[-,yshift=1cm](3,0.1)--(3,-0.1);
        \draw[-,dashed,yshift=1cm](1.5,0)--(1.5,2);

         \node[text = black, below] (r) at (1.5,1) {\scriptsize{$t^*$}};
        \node[text = black, below] (r) at (3,1) {\scriptsize{$T$}};

        \draw[->, gray, yshift=-3cm] (-0.5, 1) -- (4, 1) node[right] {$t$}; 
        \draw[->, gray,yshift=-3cm] (0, -0.5) -- (0, 2.5) node[right] {$z$};

        \draw[-,thick,yshift=-3cm] (0,0)--(1.5,0);
        \draw[-,thick,yshift=-3cm] (1.5,2)--(3,2);

        \draw[-,yshift=-3cm](3,1.1)--(3,0.9);
        \draw[-,dashed,yshift=-3cm](1.5,0)--(1.5,2);

         \node[text = black, below] (r) at (1.3,-2) {\scriptsize{$t^*$}};
        \node[text = black, below] (r) at (3,-2) {\scriptsize{$T$}};

        \draw[-](-0.1,-1)--(0.1,-1);
        \draw[-](-0.1,-3)--(0.1,-3);
        \node[text = black, below] (r) at (-0.3,-0.9) {\scriptsize{$+1$}};
        \node[text = black, below] (r) at (-0.3,-3) {\scriptsize{$-1$}};

        \draw[->, gray, yshift=1cm, xshift = 7cm] (-0.5, 0) -- (4, 0) node[right] {$t$}; 
        \draw[->, gray,yshift=1cm, xshift = 7cm] (0, -0.5) -- (0, 2.5) node[right] {$u_\varepsilon$};

        \draw[-,yshift=1cm, xshift = 7cm](-0.1,1.5)--(0.1,1.5);
        \node[text = black, below] (r) at (6.7,2.6) {\scriptsize{$\p_2$}};

        \draw[-,yshift=1cm, xshift = 7cm](-0.1,-0.3)--(0.1,-0.3);
        \node[text = black, below] (r) at (6.7,0.7) {\scriptsize{$\p_1$}};
        
        \draw[-,thick,yshift=1cm, xshift = 7cm] (0,0)--(1.3,0)--(1.7,2)--(3,2);
       
        \draw[-,yshift=1cm, xshift = 7cm](3,0.1)--(3,-0.1);
         \draw[-,yshift=1cm, xshift = 7cm](1.5,0.1)--(1.5,-0.1);

         \node[text = black, below] (r) at (8.5,1) {\scriptsize{$t^*$}};
        \node[text = black, below] (r) at (10,1) {\scriptsize{$T$}};

        \draw[->, gray, yshift=-3cm, xshift = 7cm] (-0.5, 1) -- (4, 1) node[right] {$t$}; 
        \draw[->, gray,yshift=-3cm, xshift = 7cm] (0, -0.5) -- (0, 2.5) node[right] {$z_\varepsilon$};

        \draw[-,thick,yshift=-3cm, xshift = 7cm] (0,0)--(1.6,0);
        \draw[-,thick,yshift=-3cm, xshift = 7cm] (1.6,2)--(3,2);

        \draw[-,yshift=-3cm, xshift = 7cm](3,1.1)--(3,0.9);
        \draw[-,yshift=-3cm, xshift = 7cm](1.5,1.1)--(1.5,0.9);
        \draw[-,dashed,yshift=-3cm, xshift = 7cm](1.6,0)--(1.6,2);

         \node[text = black, below] (r) at (8.3,-2) {\scriptsize{$t^*$}};
        \node[text = black, below] (r) at (10,-2) {\scriptsize{$T$}};

        \draw[-, xshift = 7cm](-0.1,-1)--(0.1,-1);
        \draw[-, xshift = 7cm](-0.1,-3)--(0.1,-3);
        \node[text = black, below] (r) at (6.7,-0.9) {\scriptsize{$+1$}};
        \node[text = black, below] (r) at (6.7,-3) {\scriptsize{$-1$}};
    
    \end{tikzpicture}
    \caption{From top-left clockwise: a piecewise constant input $u$; its continuous monotone approximation $u_\varepsilon$; the corresponding output $z_\varepsilon$; the limit output $z$; $u$ is such that after $t^*$, the threshold $\p_2$ is exceeded, hence in the limit $z$ jumps from $z_0=-1$ to $+1$ at time $t^*$.}
    \label{fig: Relayapprox}
\end{figure}
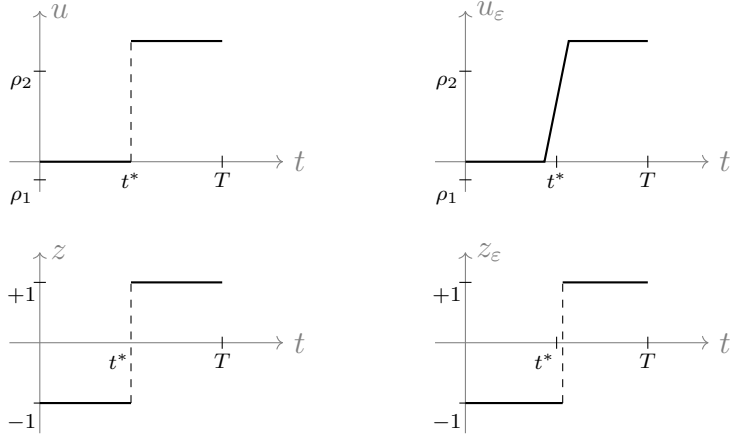

In the sequel of the paper we will also perform a limit of solutions of approximating Riemann problems, and hence we will need an extension of the Relay operator to even less regular inputs, as $BV$ functions. We have to weaken the Relay operator, closing in $\mathbb{R}^2$ the hysteresis region $\LL_\rho$ \eqref{eq: Lrho}, taking

\[
\bar{\LL}_\p=\{ (u,z)\,|\,u \geq \p_1,\, z= +1\} \cup \{(u,z)\,|\, u \leq \p_2,\, z= -1\}
\]
Given a continuous input $u:[0,T]\to \R$ and a piecewise constant function $z: [0,T]\to \{-1,1\}$, for fixed $\p=(\p_1,\p_2)$ and $z_0$ such that $(u(0),z_0)\in \bar{\LL}_\p$, we say that $z \in [\bar{h}^\p(u, z_0)]$ if and only if:
    \begin{enumerate}
        \item $z(0)=z_0$ and $(u(t),z(t))\in \bar{\LL}_\p$ $\forall\ t\in[0,T]$;
        \item if $u(t)\not=\p_1,\p_2$ then $z$ is constant in a neighborhood of $t$;
        \item if $u(t)=\p_2$ then $z$ is nondecreasing in a neighborhood of $t$;
        \item if $u(t)=\p_1$ then $z$ is nonincreasing in a neighborhood of $t$;
    \end{enumerate}
Note that the above definition may give more than one output for the same input (and indeed it is written as an inclusion $z \in [\bar{h}^\p(u, z_0)]$).
Also note that the points above define the closed Relay operator $\bar{h}^\p$ with the only difference from the previously defined $h^\p$ for the following greater freedom of the outputs in the threshold points (they may switch or not): 1) if at some time $t_1$, $z(t_1^-)=1$ and for $t_2\ge t_1$, $u(t)=\p_1$ in $[t_1,t_2]$ then for at most one time instant $t_3\in [t_1,t_2]$, from that moment on, $z$ may have changed value and this can be done only from $+1$ to $-1$, with any possible value $z(t_3)\in \{-1,+1\}$; 2) if at some time $t_1$, $z(t_1^-)=-1$ and for $t_2\ge t_1$, $u(t)=\p_2$ in $[t_1,t_2]$ then for at most one time instant $t_3\in [t_1,t_2]$, from that moment on, $z$ may have changed value and this can be done only from $-1$ to $+1$, with any possible value $z(t_3)\in \{-1,+1\}$. Finally note that $[h^\rho(u,z_0)]\in[{\bar h}^\rho(u.z_0)]$. For a more detailed view of this so-called completed Relay operator we refer to \cite[Section~6.1]{AVH}.

As above for piecewise constant functions $u,z$ we say that $z\in [\bar{h}^\p(u,z_0)]$ if there exists a sequence of continuous functions $u_\varepsilon$, filling in a monotone way the discontinuities of $u$, and a sequence $z_\varepsilon$ such that $z_\varepsilon \in [\bar{h}^\p(u_\varepsilon,z_0)]$ and $z=\lim_{\varepsilon\to0} z_\varepsilon$.\par Finally we can give the characterization via measure, which will be useful when working with $BV$ inputs $u$.
\begin{prop} \label{prop: Relayweak}
    Let us fix $z_0 \in \{-1,+1\}$ and a couple $\p=(\p_1,\p_2)$ with $\p_1<\p_2$, and consider a couple of function $u,z: [0,T] \to \R$ piecewise constant with a finite number of discontinuities. Moreover, denote by $\tilde{u}$ and $\tilde{z}$ the right-continuous representatives of $u$ and $z$ respectively and suppose $u(0)=\tilde{u}(0)$ and $z(0)=\tilde{z}(0)=z_0$, with $(u(0),z_0)\in\bar{\mathcal L}_\rho$. Then $(u,z)$ satisfies the Relay relationship $z\in[\bar{h}^{\p}(u,z_0)]$ if and only if \begin{enumerate}[1)]
        \item\label{prop1} for almost every $t\in [0,T]$, \begin{equation}\label{eq: weakhis}
            \int_{0}^{t} \tilde{u} ~d(Dz)\geq \int_{0}^{t} \p_2 ~d(D^+z)-\int_{0}^{t} \p_1 ~d(D^-z)=: \Psi_\p(z;(0,t));
        \end{equation}
        \item\label{prop2} for almost every $t\in [0,T]$, $|z|=1$ and \begin{equation} 
            \begin{cases}
                (z-1)(u-\p_2) \geq 0 \\
                (z+1)(u-\p_1)\geq 0,
            \end{cases}
        \end{equation}
        where $Dz$ denotes the measure associated to distributional derivative of $z$, and $D^+z$ and $D^-z$ respectively the positive and negative part of this measure.
    \end{enumerate}
\end{prop}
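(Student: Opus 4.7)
The plan is to reduce both implications to statements about the finitely many jump times of $z$. Since $u$ and $z$ are piecewise constant, $Dz$ is a finite combination of Diracs $\sum_i \Delta z_i \delta_{t_i}$ with $\Delta z_i\in\{\pm 2\}$, so the integrals in 1) become finite sums, and condition 2) only needs to be checked on the constancy intervals. First I would observe that 2) is just the algebraic rewriting of $(u,z)\in\bar\LL_\p$ with $|z|=1$: for $z=+1$ the factor $(z-1)(u-\p_2)$ vanishes and the second inequality becomes $u\ge\p_1$; for $z=-1$ the roles swap. So 2) is the pointwise confinement in the closed hysteresis region.

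For the implication ``$z\in[\bar h^\p(u,z_0)]\Rightarrow$ 1) and 2)'' I would read 2) off directly from $(u,z)\in\bar\LL_\p$. For 1) I would use the switching rule inherited from the continuous Relay through the monotone-fill extension: an up-jump of $z$ at $t_i$ can only arise when the monotone filling of the discontinuity of $u$ crosses the upper threshold, forcing $\tilde u(t_i)\ge\p_2$; symmetrically a down-jump forces $\tilde u(t_i)\le\p_1$. Substituting these pointwise bounds,
$$\int_0^t \tilde u \, d(Dz)-\Psi_\p(z;(0,t))=2\sum_{\Delta z_i=+2,\,t_i\le t}(\tilde u(t_i)-\p_2)+2\sum_{\Delta z_i=-2,\,t_i\le t}(\p_1-\tilde u(t_i))$$
is a sum of nonnegative terms, which gives 1).

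For the converse, I would use 2) to obtain the pointwise confinement, and then use condition 1), evaluated for $t$ immediately before and immediately after each discontinuity $t_k$ of $z$, to extract a sign condition on the $k$-th jump increment, namely $2(\tilde u(t_k)-\p_2)$ at an up-jump and $2(\p_1-\tilde u(t_k))$ at a down-jump. Combined with the pointwise confinement from 2) (which already gives the weaker bounds $\tilde u(t_k)\ge\p_1$ at up-jumps and $\tilde u(t_k)\le\p_2$ at down-jumps) and with the fact that up and down jumps of $z$ alternate, this yields $\tilde u(t_k)\ge\p_2$ at every up-jump and $\tilde u(t_k)\le\p_1$ at every down-jump. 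With these threshold conditions in hand I can build monotone continuous fillings $u_\varepsilon\to u$ of the discontinuities of $u$, together with outputs $z_\varepsilon\in[\bar h^\p(u_\varepsilon,z_0)]$ that switch precisely at the corresponding threshold crossings and satisfy $z_\varepsilon\to z$, concluding $z\in[\bar h^\p(u,z_0)]$.

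The main difficulty I expect is precisely the $(\Leftarrow)$ direction: extracting a sign at each single jump from the cumulative form of 1) requires combining it carefully with the confinement from 2) and with the alternation of jump signs, since a priori an isolated negative increment could be masked by earlier positive increments in the partial sum; the role of 2) is exactly to rule out this masking and to make the jump-by-jump analysis work.
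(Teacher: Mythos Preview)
Your forward direction is correct and is essentially the paper's argument made explicit: writing $\int_0^t\tilde u\,d(Dz)-\Psi_\p(z;(0,t))$ as the finite sum $\sum_{t_i\le t}S_i$ with $S_i=2(\tilde u(t_i)-\p_2)$ at up-jumps and $S_i=2(\p_1-\tilde u(t_i))$ at down-jumps, the threshold inequalities forced by the monotone-fill definition make every $S_i\ge0$. Condition 2) is immediate from $(u,z)\in\bar{\LL}_\p$.

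For the converse you have correctly located the weak point, but your proposed cure does not close it. Condition 2) yields only $\tilde u(t_k)\ge\p_1$ at an up-jump and $\tilde u(t_k)\le\p_2$ at a down-jump, hence merely $S_k\ge-2(\p_2-\p_1)$; together with the cumulative bound $\sum_{j\le k}S_j\ge0$ from 1) and with alternation this still permits individual $S_k<0$. Concretely, take $\p_1=0$, $\p_2=1$, $z_0=-1$, and $(u,z)$ equal to $(0.5,-1)$ on $[0,t_1)$, $(2,+1)$ on $[t_1,t_2)$, $(0.5,-1)$ on $[t_2,T]$. Then 2) holds everywhere and the partial sums in 1) are $2$ and $1$, both nonnegative; yet every monotone filling of $u$ at $t_2$ decreases from $2$ to $0.5$ without reaching $\p_1=0$, so no $z_\varepsilon\in[\bar h^\p(u_\varepsilon,z_0)]$ can switch back down, and hence $z\notin[\bar h^\p(u,z_0)]$. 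So the masking you anticipated is \emph{not} ruled out by 2) plus alternation. The paper's own proof handles only the single-jump case explicitly (where 1) alone gives $S_1\ge0$ directly) and then asserts the multi-jump case ``by the same reasoning'', so it shares this gap; only the forward implication is actually used downstream (e.g.\ to obtain \eqref{eq: dis4}), and that direction is sound.
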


\begin{proof}
    ``$(\implies)$" Suppose $z\in[\bar{h}^\p(u,z_0)]$, then $\ref{prop2})$ follows immediately, since $(u,z)\in \bar{\mathcal{L}}_\rho$ for almost every $t$. Now we prove $\ref{prop1})$: if in the interval $[0,T]$, $z$ remains constant then $\ref{prop1})$ is trivially true. Now suppose that we have a single discontinuity for $z$ at time $t^*$, such that, e.g.,  $z$ jumps from $-1$ to $+1$. This means that $\tilde{u}(t^*)\geq \p_2$ and since \[ \int_{0}^{t} \tilde{u} ~d(Dz) = 2 \tilde{u}(t^*),\quad\int_{0}^{t} \p_2 ~d(D^+z) = 2 \p_2 \quad \text{and} \quad\int_{0}^{t} \p_1 ~d(D^-z) = 0,\] then $1)$ follows. Similarly we can deal with the case when $z$ jumps from $+1$ to $-1$ and proceeding recursively we can prove $\ref{prop1})$ for a finite number of jumps.

    ``$(\impliedby )$" Now assume instead $\ref{prop1})$ and $\ref{prop2})$. From $\ref{prop2})$ we have that whenever $z=1$ then $u\geq \p_1$ and whenever $z=-1$ then $u\leq \p_2$. So if $z $ is constant in an interval, this means that either $u$ has taken values in $(-\infty, \p_2]$ if $z\equiv -1$ or in $[ \p_1,+\infty)$ if $z\equiv +1$. Moreover, $\ref{prop2})$ implies that if $u>\p_2$ then $z=1$ and if $u<\p_1$ then $z=-1.$ Suppose now instead that $z$ has a single jump at time $t^*$, for example from $-1$ to $1$ then from $\ref{prop1})$ we get that $\tilde{u}(t^*)\geq \p_2$. Hence an approximation $u_\varepsilon$ as in Figure \ref{fig: Relayapprox} is easily constructed and the corresponding output $z_\varepsilon$ almost everywhere converge to $z$, showing that $z\in [\bar{h}^\p(u),z_0].$  By the same reasoning it can be $z \in [\bar{h}^\p(u,z_0)]$ for functions of a finite number of jumps. 
\end{proof}

The extension of hysteresis operators to piece-wise constant or BV inputs is just a part of the wider problem of extending them to non-regular inputs. The piece-wise continuous input approximation we performed can be seen as a special application of such extensions, for further references we then cite \cite{RF2}, \cite{RF1} and \cite{RV}.


\section{Preisach Operator}\label{S2}

Preisach operators are constructed by combining the action of a family of Relay operators with different thresholds, but with the same input working in parallel. We define first the Preisach plane as the half plane of all possible pairs of thresholds \begin{equation}
    \PP:=\{\p=(\p_1,\p_2)\in \R^2~|~ \p_1<\p_2\}.
\end{equation} Hence any point $\rho\in{\mathcal P}$ is identified with a Relay $\bar{h}^\rho$ and vice-versa.

Consider $\M$ the set of Borel measurable function from $\PP$ to $\{-1,1\}$ and denote by $z_0$ a generic element of $\M$, so for almost every $ \p=(\p_1,\p_2)\in \PP$ we have $z_0(\p) \in \{-1,1\}.$ The function $z_0$ represents for each $\p$ the initial state of the corresponding Relay $\bar{h}^\rho$ having threshold $\p$. We fix a parameter $a>0,$ and consider the triangle $\T$ on the plane $\PP$ with vertices $(-a,-a),(a,a)$ and $(-a,a)$. On $\T$ we then consider the two dimensional Lebesgue measure $\LM^2_{\llcorner \T}$.
\begin{defi}\label{def: POpe}
    The Preisach operator is defined as 
    \begin{equation}
    \label{eq:Preisach}
        \begin{split}
            \HM_{\LM^2_{\llcorner \T}} :~& \tilde{\LL} \subset C^0([0,T]) \times \M \to L^{\infty}(0,T) \\
            &(u,z_0)\to \int_{\PP} 
            z(t;\p)
            \,d \LM^2_{\llcorner \T} (\p)= \int_\T z(t;\p)\, d\p,
            \end{split}
        \end{equation}
        where $z(\cdot\,;\p)\in[\bar{h}^\p (u, z_0(x;\p))]\,$ and $\tilde{\LL}:=\{(u,z_0)\in C^0([0,T])\times \M \,|\, (u(0),z_0(\p)) \in \bar{\LL}_\p,\forall \p \in \PP\}.$ In the sequel, the condition $(u,z_0)\in\tilde{\LL}$ will be sometimes quoted saying that $u$ and $z_0$ are compatible.
\end{defi}

\begin{rmk}
    In Definition \ref{def: POpe} we wrote the explicit dependence of $\HM$ on $\LM^2_{\llcorner \T}$, since the Preisach operator can be constructed by choosing any finite (possibly signed) Borel measure $\mu$ over $\PP$. Nevertheless, in the following we will deal only with the operator $\HM_{\LM^2_{\llcorner \T}}$ so we drop the explicit dependence on $\LM^2_{\llcorner \T}$ and we write $w(\cdot):=[\HM(u,z_0)](\cdot)$ as the output of the operator. Moreover, as we are going to see (Remark \ref{rmk:scelta}), the choice of the Lebesgue measure yields the uniqueness of the output $w$ even if every Relay $\bar{h}^\rho$ still remains multi-valued. 
\end{rmk}

There is a useful interpretation of the evolution of the output $w$ by geometrical considerations on the Preisach plane. 

 Let us fix a function $u \in C^0([0,T])$ and $z_0=z_v \in \M$,
 which we call the virgin state (symmetric: never experienced switching evolution of the Relays), that is \begin{equation}
    z_v(\p)= \begin{cases}
        -1 \quad &\text{if} \quad \p_2 > - \p_1,\\
        +1 \quad &\text{if} \quad \p_2 < - \p_1.
    \end{cases}
\end{equation} Notice that the set $\{\p_2=-\p_1\}$ has Lebesgue measure $0$ so it is not necessary to specify the values of $z_0$ on that set, moreover, since the measure $\LM^2_{\llcorner \T}$ is only supported on $\T$, we consider $z_0$ restricted only to $\T$. We set then 
\begin{equation}
    \label{eq:csi}
\xi_z(t):=\partial \{ \p \in \PP ~|~ z(t;\p) =-1 \}\cap \partial \{ \p \in \PP ~|~ z(t;\p) =+1\}
\end{equation}
and note that by \eqref{eq:Preisach} it is
\begin{equation}
    \label{eq:w}
w(t)=\LM^2_{\llcorner \T}\left(\{ \p \in \PP ~|~ z(t;\p) =+1\}\right)-\LM^2_{\llcorner \T}\left(\{ \p \in \PP ~|~ z(t;\p) =-1 \}\right)
\end{equation}

Suppose that $u(0)=0$ and that it is non decreasing in the time interval $[0,t_1]$. By the definition of the Relay operator we conclude that  \begin{equation}\label{eq: zpp} z(t_1;\p)=
    \begin{cases}
        -1 \quad & \text{if} \quad u(t_1)< \p_1,\\
        +1 \quad & \text{if} \quad u(t_1) > \p_2,\\
        z_v \quad & \text{if} \quad \p_1<u(t_1)<\p_2,
    \end{cases}
\end{equation}
because the only possible switches in the time interval $[0,t_1]$ are from $-1$ to $1$ for those Relays whose upper threshold $\rho_2$ is bypassed by the non decreasing input $u$.
So $\xi_z(t_1)$ is the union of a horizontal segment right-anchored at $\rho=(u(t_1),u(t_1))$ and part of the original diagonal $\{\p_1=-\p_2\}$. Then suppose that $u$ is non-increasing in the time interval $[t_1,t_2]$. Then the only possible switches in the time interval $[t_1,t_2]$ are from $1$ to $-1$ for those Relays whose lower threshold $\rho_1$ is bypassed by the non-increasing input $u$. In this case $\xi_z(t_2)$ is the union of a vertical segment bottom-anchored at $\rho=(u(t_2),u(t_2))$, the remaining part of the horizontal segment here above and of the part of the original diagonal of the virgin state. For a piecewise monotone input $u$, we can repeat the alternating considerations above and see that, when $u$ increases, $\xi_z$ moves up generating horizontal segments, when $u$ decreases, $\xi_z$ goes to the left generating vertical segments (see Figure \ref{fig: prei2}). Notice that the intersection between $\xi_z(t)$ and the diagonal $\p_1=\p_2$ is always at $(u(t), u(t)).$ 

\begin{rmk}\label{rmk:scelta}Note that in all cases here above, in view of \eqref{eq:w}, the state of the Relays with threshold $\rho\in\xi_z$ is irrelevant because $\xi_z$ has null Lebesgue measure. Since the multivalued feature of the closed delayed Relay $\bar{h}^\rho$ may play a role only when $\rho\in\xi_z(t)$, we conclude that \eqref{eq:Preisach} is not more multivalued and that it uniquely defines the output $w$ of the Preisach operator which, by \eqref{eq:csi} and \eqref{eq:w} is uniquely determined by the evolution of the graph $t\mapsto\xi(t)$ on the Preisach plane. In other words, regardless of the choice of the selection $z\in[\bar{h}^\p (u, z_0(\p))]$, formula \eqref{eq:Preisach} always gives the same output $w$. Hence in the sequel, when dealing with the Preisach operator, we will always assume that in \eqref{eq:Preisach} we take $z(\cdot,\rho)=[h^\rho(u,z_0(\rho))](\cdot)$.

Moreover, it can be proven that for any continuous input $u$ (not only piecewise monotone), starting from $z_v$, the corresponding $\xi_z$ will be an antimonotone graph, union of part of the diagonal $\p_1=-\p_2$ with an either finite or countable family of alternating vertical and horizontal segments, possibly accumulating at $(u(t),u(t))$. The inverse is also true, for any $\xi_z$ of that form we can find a function $u$ that generates it starting from $\xi_v$ (see \cite[Theorem~2.2 Chapter~IV]{AVH}). 
\end{rmk}

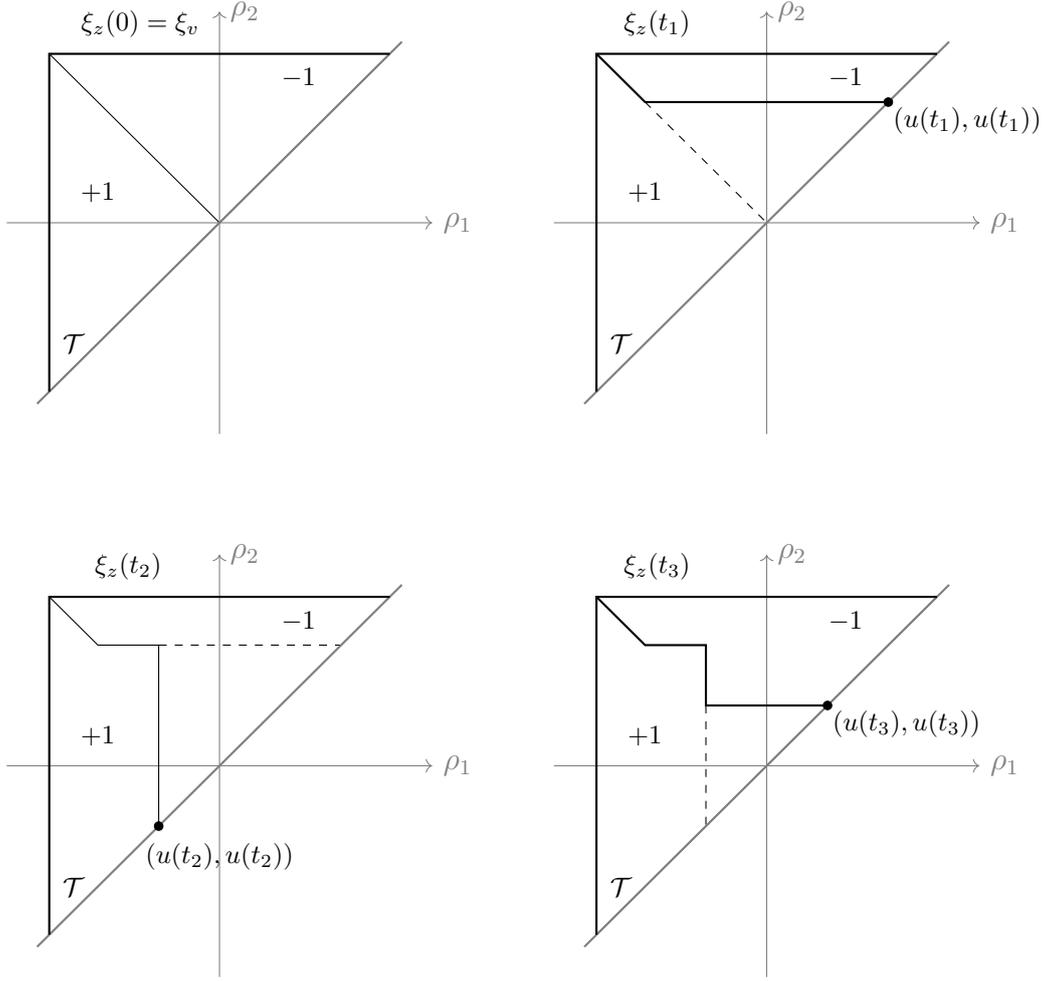
\begin{figure}[H]
    \centering
    \begin{tikzpicture}[scale=0.8]
    \draw[->, gray] (-3.5, 0) -- (3.5, 0) node[right] {$\p_1$}; 
    \draw[->, gray] (0, -3.5) -- (0, 3.5) node[right] {$\p_2$}; 

    \draw[black] (-2.8,-2.8)--(2.8,2.8);
    \draw[black] (-2.8,2.8)--(0,0);
    \draw[thick, gray] (-3,-3)--(3,3);
    \draw[thick ] (-2.8,-2.8)--(-2.8,2.8)--(2.8,2.8);
    
    \node at (-2,0.5) {\footnotesize{$+1$}};
    \node at (-1.3,3.3) {\footnotesize{$\xi_z(0)=\xi_v$}};
    \node at (1.3,2.4) {\footnotesize{$-1$}};
    \node at (-2.4,-2) {\footnotesize{$\T$}};

    \draw[->, gray, xshift =9 cm] (-3.5, 0) -- (3.5, 0) node[right] {$\p_1$}; 
    \draw[->, gray, xshift = 9 cm] (0, -3.5) -- (0, 3.5) node[right] {$\p_2$}; 

    \draw[black, xshift = 9 cm] (-2.8,-2.8)--(2.8,2.8);
    \draw[black, dashed, xshift = 9 cm] (-2,2)--(0,0);
    \draw[thick, gray, xshift = 9 cm] (-3,-3)--(3,3);
    \draw[thick, xshift = 9 cm] (-2.8,2.8)--(-2,2)--(2,2);
    \filldraw[xshift = 9cm] (2,2) circle (2pt);
    
    \draw[thick, xshift = 9 cm] (-2.8,-2.8)--(-2.8,2.8)--(2.8,2.8);
    
    \node at (7,0.5) {\footnotesize{$+1$}};
    \node at (7.2,3.3) {\footnotesize{$\xi_z(t_1)$}};
    \node at (12.3,1.7) {\footnotesize{$(u(t_1),u(t_1))$}};
    \node at (10.3,2.4) {\footnotesize{$-1$}};
    \node at (6.6,-2) {\footnotesize{$\T$}};

     \draw[->, gray, yshift = -9cm] (-3.5, 0) -- (3.5, 0) node[right] {$\p_1$}; 
    \draw[->, gray, yshift = -9cm] (0, -3.5) -- (0, 3.5) node[right] {$\p_2$}; 

    \draw[black, yshift =- 9cm] (-2.8,-2.8)--(2.8,2.8);
    \draw[black, yshift = -9cm] (-2.8,2.8)--(-2,2)--(-1,2)--(-1,-1);
    \draw[black, dashed, yshift = -9cm] (-1,2)--(2,2);
    \draw[thick, gray, yshift = -9cm] (-3,-3)--(3,3);
    \draw[thick, yshift = -9cm ] (-2.8,-2.8)--(-2.8,2.8)--(2.8,2.8);

    \filldraw[ yshift = -9cm] (-1,-1) circle (2pt);
    
    \node at (-2,-8.5) {\footnotesize{$+1$}};
    \node at (-1.5,-5.7) {\footnotesize{$\xi_z(t_2)$}};
    \node at (1.3,-6.6) {\footnotesize{$-1$}};
    \node at (-2.4,-11) {\footnotesize{$\T$}};
    \node at (0,-10.5) {\footnotesize{$(u(t_2),u(t_2))$}};

    \draw[->, gray, xshift =9 cm, yshift = -9cm] (-3.5, 0) -- (3.5, 0) node[right] {$\p_1$}; 
    \draw[->, gray, xshift = 9 cm, yshift = -9cm] (0, -3.5) -- (0, 3.5) node[right] {$\p_2$}; 

    \draw[black, xshift = 9 cm, yshift = -9cm] (-2.8,-2.8)--(2.8,2.8);
    \draw[black, dashed, xshift = 9 cm, yshift = -9cm] (-1,1)--(-1,-1);
    \draw[thick, gray, xshift = 9 cm, yshift = -9cm] (-3,-3)--(3,3);
    \draw[thick, xshift = 9 cm, yshift = -9cm] (-2.8,2.8)--(-2,2)--(-1,2)--(-1,1)--(1,1);
    \filldraw[xshift = 9cm, yshift = -9cm] (1,1) circle (2pt);
    
    \draw[thick, xshift = 9 cm, yshift = -9cm] (-2.8,-2.8)--(-2.8,2.8)--(2.8,2.8);
    
    \node at (7,-8.5) {\footnotesize{$+1$}};
    \node at (7.2,-5.7) {\footnotesize{$\xi_z(t_3)$}};
    \node at (11.3,-8.3) {\footnotesize{$(u(t_3),u(t_3))$}};
    \node at (10.3,-6.6) {\footnotesize{$-1$}};
    \node at (6.6,-11) {\footnotesize{$\T$}};
    \end{tikzpicture}
    \caption{A possible $\xi_z(t)$, where the input is piecewise monotone as described in the paragraph. Here, we represent $\xi_z(t_{i-1})$ by the dashed line together with $\xi_z(t_{i})$, represented by the solid line, to highlight the evolution in time of $\xi_z(\cdot)$;}
    \label{fig: prei2}
\end{figure}

Inspired by Remark \ref{rmk:scelta}, we state the following.

\begin{defi}
    \label{def:S} We denote by $\tilde{\mathcal S}$ the set of all $z\in \M$ such that the function $z:{\mathcal P}\to\{-1,1\}$ is generated starting from $z_v$ via an input $u\in C^0([0,T])$, that is, for $\rho\in{\mathcal P}$,

\[
z(\rho)=[h^\rho(u,z_v(\p))](T).
\] 
Any $z\in \M$ will be refereed as a configuration of the Preisach plane. Similarly, we denote by $\tilde{\mathcal B}$ the set of the corresponding maximal anti-monotone graphs $\{ \xi_z \,|\, z\in \tilde{\mathcal S}\}$ associated to a specific configuration. We also consider the metric space $(\tilde{\mathcal S}, d)$ setting \[d(z_1,z_2):= \int_\T |z_1-z_2| \, d\p,\] which is isometric to the metric space $(\tilde{\mathcal B}_z, d')$ with $d'(\xi_{z_1}, \xi_{z_2}):=d(z_1,z_2).$ We finally denote by $\tilde{\mathcal S}_L$ the set of the configurations $z\in\tilde{\mathcal S}$ generated by $L-$lipshitz inputs $u$, and consequently by $\tilde{\mathcal B}_L:=\{\xi_z ~|~ z \in \tilde{\mathcal S}_L\}.$
\end{defi}

\begin{rmk}
    $\tilde{\mathcal{S}}$ and $\tilde{\mathcal B}$ are not compact metric spaces, this is why we introduced $\tilde{\mathcal S}_L$ and $\tilde{\mathcal B}_L$ which can be proven to be compact subset of $\tilde{\mathcal S}$ and $\tilde{\mathcal B}$ respectively (see the continuity properties of the Preisach operator in \cite[Chapter~IV]{AVH}).
\end{rmk}

Similarly to Proposition \ref{prop: Relayweak}, the following characterization via measures for the Preisach operator holds.

\begin{prop} \label{prop: preisweak}
    Let us fix $z_0 \in \tilde{\mathcal S}$ and a couple of function $u,w: [0,T] \to \R$ piecewise constant with a finite number of discontinuities with $w=\int_\T z\,d\p$ for some $z:[0,T] \mapsto \tilde{\mathcal S}$. Moreover, denote by $\tilde{u}$ and $\tilde{z}$ the right-continuous representatives of $u$ and $z$ respectively and suppose $u(0)=\tilde{u}(0)$, $z(0)=\tilde{z}(0)=z_0$ and that $(u(0),z_0(\rho))\in{\cal \LL_\p}$ for a.e. $\p$. Then $(u,w)$ satisfies the Preisach relationship $w(t)=[\HM(u,z_0)](t)$ in $[0,T]$ if and only if \begin{enumerate}[1)]
        \item for almost every $t\in [0,T]$ and for any measurable subset $\T'\subset\T$, it holds 
        \begin{equation}\label{eq: preisweak1}\begin{split}
            \int_{\T'} \int_{0}^{t} \tilde{u} ~d(Dz(\cdot\,;\p))\, d\p&\geq \int_{\T'}\left(\int_{0}^{t} \p_2 ~d(D^+z(\cdot\,;\p))-\int_{0}^{t} \p_1 ~d(D^-z(\cdot\,;\p))\right) d\p\\&= \int_{\T'} \Psi_\p(z;(0,t))\,d\p
            \end{split}
        \end{equation}
        \item for almost every $t\in [0,T]$ and almost every $\p\in\T$\begin{equation}\label{eq: preisweak2}
         z(t;\p)=[h^\p(u,z_0(\p))](t).\end{equation}
    \end{enumerate}
\end{prop}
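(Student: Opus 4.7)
The strategy is to reduce Proposition \ref{prop: preisweak} to a pointwise-in-$\p$ application of Proposition \ref{prop: Relayweak} followed by integration over the Preisach triangle $\T$. Since, by Definition \ref{def: POpe} and Remark \ref{rmk:scelta}, the Preisach operator is the $\T$-integral of the (essentially uniquely determined) Relay outputs, the equivalence should lift directly from the Relay level, provided the quantities appearing in \eqref{eq: preisweak1} depend measurably on $\p$.

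For the forward direction, suppose $w(t)=[\HM(u,z_0)](t)$. Remark \ref{rmk:scelta} allows us to choose the representative $z(\cdot;\p)=[h^\p(u,z_0(\p))](\cdot)$ without altering $w$, and this immediately yields condition \eqref{eq: preisweak2}. Applying Proposition \ref{prop: Relayweak} at each fixed $\p$ gives the pointwise inequality $\int_0^t\tilde u\,d(Dz(\cdot;\p))\ge\Psi_\p(z;(0,t))$ for almost every $t$. Because $u$ is piecewise constant with finitely many jumps, the switching times of each Relay $\bar h^\p$ form a finite set contained in that of $u$, determined by whether $\p_1$ or $\p_2$ lies in intervals attached to the jumps of $u$. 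Thus $\p\mapsto\int_0^t\tilde u\,d(Dz(\cdot;\p))$ and $\p\mapsto\Psi_\p(z;(0,t))$ are Borel measurable on $\T$, and integration over any measurable $\T'\subset\T$ via Tonelli yields \eqref{eq: preisweak1}.

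For the backward direction, condition \eqref{eq: preisweak2} identifies $z(\cdot;\p)$ with $[h^\p(u,z_0(\p))](\cdot)$ for almost every $\p$. Integrating over $\T$ and using Definition \ref{def: POpe} gives
\[
w(t)=\int_\T z(t;\p)\,d\p=\int_\T [h^\p(u,z_0(\p))](t)\,d\p=[\HM(u,z_0)](t).
\]
The main technical point I expect is verifying the measurability in $\p$ of the switching times of $z(\cdot;\p)$ and hence of the signed measures $Dz(\cdot;\p)$, $D^{\pm}z(\cdot;\p)$; this remains tractable because $u$ has only finitely many discontinuities, so each switching set is cut out of $\T$ by finitely many linear inequalities in $\p$. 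Observe that in this piecewise-constant setting condition \eqref{eq: preisweak2} alone already characterizes the Preisach relation, while \eqref{eq: preisweak1} is a (partially redundant) measure-theoretic formulation included for later use in Section \ref{S5}, where one passes to a $BV$ limit and \eqref{eq: preisweak2} can no longer be evaluated pointwise.
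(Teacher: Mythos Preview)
Your forward direction matches the paper's one-line argument: apply Proposition~\ref{prop: Relayweak} at each $\p$ and integrate over $\T'$, using positivity of Lebesgue measure. Your attention to $\p$-measurability is a welcome addition that the paper leaves implicit.

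Your backward direction, however, is markedly shorter than the paper's, and this is the point worth flagging. You correctly observe that condition~\eqref{eq: preisweak2}, \emph{as literally written}, identifies $z(\cdot;\p)$ with the Relay output $[h^\p(u,z_0(\p))]$ for a.e.\ $\p$, so integrating over $\T$ immediately yields $w=[\HM(u,z_0)]$; condition~\eqref{eq: preisweak1} is then redundant for the equivalence. The paper instead runs a case-by-case contradiction argument driven entirely by~\eqref{eq: preisweak1}: starting from constant $u\equiv u_0$, it takes $\T'=\{\p_1<u_0,\ \p_2>u_0\}$, splits it into $\T'_\pm$ according to the sign of a putative jump of $z$, and derives a strict inequality that violates~\eqref{eq: preisweak1}, thereby forcing $z\equiv z_0$ on $\T'$; the jump case is then handled inductively.

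The discrepancy is almost certainly due to an editorial slip in the statement: condition~2) was evidently \emph{intended} to be the pointwise inequality pair from Proposition~\ref{prop: Relayweak}, item~2) (namely $|z|=1$ together with $(z-1)(u-\p_2)\ge0$ and $(z+1)(u-\p_1)\ge0$), not the full Relay relation. Two pieces of evidence support this: first, the paper's backward proof only uses~\eqref{eq: preisweak2} to extract the sign of $z$ on $\{\p_2\le u_0\}$ and $\{\p_1\ge u_0\}$, which is exactly what the inequalities give and for which the full Relay identity would be gratuitous; second, the later Definition~\ref{def: weak} of weak solution uses precisely that inequality form in~\eqref{eq: hdis}. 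Under the intended reading, your trivial integration argument would not suffice and the paper's use of~\eqref{eq: preisweak1} becomes essential. Under the statement exactly as printed, your argument is correct and your closing remark about the redundancy of~\eqref{eq: preisweak1} is on the mark.
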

\begin{proof}    
``$(\implies)$" This implication follows directly from Proposition \ref{prop: Relayweak}, indeed the measure $\LM^2$ is positive.\par
    ``$(\impliedby )$" Now assume instead $1)$ and $2)$. If $u$ is constant in time, let's say $u\equiv u_0$, then from \eqref{eq: preisweak2} it follow that, on the Preisach plane, on $\{\p_2 \leq u_0\},$ $z(\cdot;\p)\equiv +1,$ and on $\{\p_1 \geq u_0\},$ $z(\cdot;\p)\equiv -1.$ Consider now the set $\T':=\{\rho=(\rho_1,\rho_2)|\p_1 < u_0,\, \p_2 >u_0\}$ we claim that $z\equiv z_0$ in $\T'$. 
    Suppose by contradiction that this is not true, that is, at a time $t^*$, $z(t^*+,\cdot)\not = z(t^*-,\cdot)$ on $\T'$, and consider $\T'_+:=\{\p\in \T' \,|\, z(t^*-,\rho)=-1,\, z(t^*+,\rho)=+1\} $ and $\T'_-:=\{\p \in \T'\,|\, z(t^*-,\rho)=+1,\, z(t^*+,\rho)=-1\}.$ By absurd hypothesis at least one among $\T'_-$ and $\T'_+$ must have positive Lebesgue measure. From \eqref{eq: preisweak1} applied for some $t>t^*$ and from the definition of $\T'$ we then get the contradicting inequalities \[u_0 (\LM^2(\T'_+)-\LM^2(\T'_-)) \geq \int_{\T'_+} \p_2 \, d\p - \int_{\T'_-} \p_1 \, d\p> u_0 (\LM^2(\T'_+)-\LM^2(\T'_-)).  \] Hence it must be $\LM^2(\T'_+)=\LM^2(\T'_-)=0$, which means that $z \equiv z_0$ and consequently $w(t)=w(0)=[\HM(u,z_0)](0)=[\HM(u,z_0)](t)$ in $[0,T]$. Now assume $u$ not constant with one jump discontinuity at $t^*$. Again on the set $\{\rho\in{\mathcal P}|\p_2 \leq u(t^*+)\}$ it is $z(t^*,\cdot)=1$, on $\{\p\in{\mathcal P}|\p_1 \geq  u(t^*+)\}$ it is $z(t^*,\cdot)=-1$, and it can be similarly proved as above that on $\T':=\{\p\in{\mathcal P}|\p_1 \leq  u(t^*+),\, \p_2 \geq  u(t^*+)\}$ $z$ does not switch: $z(t^*+,\cdot)=z(t^*-,\cdot)=z_0(t',\cdot)$ for any $t'<t^*$ such that $u$ has no jumps in the time interval $[t',t^*]$. This implies that $\xi_z(t^*+)$ defined as in \eqref{eq:csi} is exactly  the maximal anti-monotone graph associated to the process $\HM(u,z_0)$ and hence $w=\HM(u.z_0)$ in $[0,T]$. 
\end{proof}

We now prove that for an input $u$ piecewise constant the state $z$ generated is in $\tilde{\mathcal S}_L$ for some suitable $L$ depending on the total variation of $u$. To do so, following \cite[Section~III.6]{AVH}, we introduce the so called reduced memory sequence of $u$. Define \[\begin{split}   
M:= \max_{[0,T]} u, &\quad m := \min_{[0,T]} u, \\ \quad t_M:= \sup \{ t\in [0,T]\,|\, u(t)=M\} \quad & \text{and} \quad t_m:= \sup \{ t\in [0,T]\,|\, u(t)=m\}.\end{split}\] Excluding the trivial case when $t_m=t_M$, suppose for example that $t_M < t_m\leq T$, the other case is quite similar. Then we set  \[ t_1=t_M,\quad \eta^1=M, \quad \eta_2:=\min_{[t_1,T]} u  \quad \text{and} \quad t_2:= \sup\{ t\in ]t_1,T]\,|\, u(t) = \eta_2\},  \] and if $t_2= T$ we stop. If not we set \[\eta^3:=\max_{[t_2,T]} u  \quad \text{and} \quad t_3:= \sup\{ t\in ]t_2,T]\,|\, u(t) = \eta^3\}\] and we go on alternating local maxima and minima until $t_k=T$. Notice that the procedure always ends as we suppose the function to have a finite number of constant pieces. Then the resulting sequence $\{\eta^1,\eta_2,\eta^3,\eta_4,\dots\}$ (or $\{\eta_1,\eta^2,\eta_3,\eta^4,\dots\}$ if we start with a minimum) is called reduced memory sequence, denoted by $RMS(u; [0,T])$ and it gives information about the sequence of maximal peaks (resp. minimal peaks) not deleted by subsequent greater peaks (resp. lower peaks). Such a sequence is important for the Preisach operator, indeed the output of the operator with input $u$ depends only on the $RMS(u; [0,T])$, that is given two inputs $u_1,u_2$ either continuous or piecewise constant, if $RMS(u_1; [0,T])=RMS(u_2; [0,T])$, then $w_1=[\HM(u_1,z_0)]=[\HM(u_2,z_0)]=w_2$.\par Exploiting this property we can give the criterion for a piecewise constant input to generate configuration $z \in \tilde{\mathcal S}_L.$ We first need the following lemma.

\begin{lem}\label{lemma: propPreis1}
    Consider a piecewise constant function $u:[0,T]\to\R$, consisting of a finite number of pieces, with its total variation $Var(u)\leq C$. Then there exists a function $u_{RMS}:[0,T]\to \R$ that is $L$-Lipschitz with $L$ depending on $C$ and $T$ such that $u_{RMS} (0)=u(0)$, $RMS(u;[0,T])=RMS(u_{RMS};[0,T])$ and $Var(u_{RMS})\leq C$.
\end{lem}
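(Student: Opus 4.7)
The plan is to construct $u_{RMS}$ as a piecewise linear curve that hits the successive values of the reduced memory sequence of $u$ at a uniform slope $\pm L$, padded by a constant piece at the end.

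First, I would compute $RMS(u;[0,T]) = (\eta_1,\eta_2,\dots,\eta_k)$ as described in the paragraph preceding the lemma (alternating maxima and minima, with $k<\infty$ since $u$ has finitely many constant pieces), and set $\eta_0 := u(0)$. Writing $0 = t_0<t_1<\dots<t_k=T$ for the associated times, on each interval $[t_{j-1},t_j]$ the function $u$ joins $\eta_{j-1}$ to $\eta_j$, so $Var(u;[t_{j-1},t_j])\ge |\eta_j-\eta_{j-1}|$; summing yields
\[
V \;:=\; \sum_{j=1}^{k} |\eta_j - \eta_{j-1}| \;\le\; Var(u) \;\le\; C.
\]
Now set $L:=C/T$, let $s_0:=0$ and $s_j:=s_{j-1}+|\eta_j-\eta_{j-1}|/L$ for $j=1,\dots,k$, so that $s_k=V/L\le T$, and define $u_{RMS}$ to be the piecewise linear interpolant of $(s_0,\eta_0),(s_1,\eta_1),\dots,(s_k,\eta_k)$ on $[0,s_k]$, extended as the constant $\eta_k$ on $[s_k,T]$.

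The first three required properties are then immediate: $u_{RMS}(0)=u(0)$ by construction, each linear segment has slope exactly $\pm L$ and the tail has slope $0$, so $u_{RMS}$ is $L$-Lipschitz with $L=C/T$ depending only on $C$ and $T$, and $Var(u_{RMS})=V\le C$. The nontrivial point is the equality $RMS(u_{RMS};[0,T]) = RMS(u;[0,T])$. Here one uses that the surviving peaks in the RMS of $u$ are strictly decreasing ($\eta_1>\eta_3>\cdots$) and the surviving valleys strictly increasing ($\eta_2<\eta_4<\cdots$): otherwise the definition of each $t_j$ as a sup would contradict the choice of an earlier $\eta_i$. This strict monotonicity, combined with the fact that $u_{RMS}$ is monotone on each $[s_{j-1},s_j]$ and constant on $[s_k,T]$, implies that each $\eta_j$ is realized as the required extremum of $u_{RMS}|_{[s_{j-1},T]}$ and that $\sup\{t\in(s_{j-1},T]: u_{RMS}(t)=\eta_j\}$ equals $s_j$ for $j<k$ and $T$ for $j=k$. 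Applying the RMS algorithm to $u_{RMS}$ therefore reproduces exactly $(\eta_1,\dots,\eta_k)$, and the stopping condition $t_k=T$ is guaranteed by the constant tail.

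The main obstacle is precisely this RMS-preservation step; it reduces to showing that padding by a constant on $[s_k,T]$ neither creates a new extremum nor disturbs the surviving ones, and that the strict monotonicity of peaks and valleys in $RMS(u)$ carries over to $u_{RMS}$. Both reduce to elementary checks about the alternating sequence $(\eta_j)$, but the bookkeeping must be done with some care, especially in degenerate subcases such as $u(0)=\eta_1$ (where $s_1=s_0=0$) or $u$ constant (where the RMS reduces to a single element and $u_{RMS}\equiv u(0)$ is trivially $L$-Lipschitz for any $L\ge 0$).
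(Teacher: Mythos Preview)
Your proposal is correct and follows essentially the same construction as the paper: both build $u_{RMS}$ as the piecewise linear interpolant through the points $(s_j,\eta_j)$ with $s_j-s_{j-1}$ proportional to $|\eta_j-\eta_{j-1}|$, yielding constant slope bounded by $C/T$. The only cosmetic difference is that the paper rescales the nodes so that $s_k=T$ exactly (slope $\tilde C/T\le C/T$), whereas you fix the slope at $C/T$ and pad with a constant tail on $[s_k,T]$; your version actually makes the RMS-preservation check more transparent, and you spell out the strict monotonicity of surviving peaks and valleys that the paper simply declares ``clear''.
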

\begin{proof}
    Consider the reduced memory sequence of $u$, which will be of the form $\{\eta^1,\eta_2,\eta^3,\dots\}$ or $\{\eta_1,\eta^2,\eta_3,\dots\}$, where $\eta^i$ are maxima and $\eta_i$ are minima. We know that, denoting the elements of RMS by $v_i$ with $v_0=u(0)$, \[\sum_{i=1} |v_{i}-v_{i-1}|\leq Var(u)\leq C.\]
    Now define the sequence \[z_j:= \sum_{i=1}^{j}|v_i-v_{i-1}|\] so that $\{z_j\}_j$ is a monotonically increasing sequence bounded by $\tilde{C}\leq C$, with $\tilde{C} = \max z_j$ and set $z_0=u(0).$ By rescaling the points to the interval $[0,T]$, that is by setting
    \[
    x_i:= \frac{T}{\Tilde{C}} z_i,
    \] 
    we can define the function $u_{RMS}$ as the piecewise linear function that interpolates the points $(x_i,v_i)$. \par 
    It is clear that $RMS(u;[0,T])=RMS(u_{RMS};[0,T])$ and $Var(u_{RMS})\leq C$. \par
    Moreover, considering the modulus $m_i$ of the slopes of the segments corresponding to the linear parts we have that
    \[m_i = \frac{|v_i-v_{i+1}|}{|x_{i+1}-x_i|} = \frac{\tilde{C}}{T} \frac{|v_i-v_{i+1}|}{|z_{i+1}-z_i|} = \frac{\tilde{C}}{T} \leq \frac{C}{T},\]
    hence $u_{RMS}$ is $L-$lipshitz with $L\leq C/T$.
\end{proof}

\begin{prop}\label{prop: Ltilda}
Let $L>0$ and $z_0\in\tilde{\mathcal S}_L$ and $u:[0,T]\to\mathbb{R}$ piecewise constant, with $Var(u)\le C$ and $u$ compatible with $z_0$. Then, for every $\tau\in[0,T]$, the function $z(\tau):\rho\mapsto [h^\rho(u,z_0(\rho))](\tau)$ belongs to $\tilde{\mathcal S}_{2\max(L,C/T)}$.
\end{prop}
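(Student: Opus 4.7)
The strategy is to build an explicit Lipschitz input $\tilde u:[0,T]\to\R$ whose Preisach evolution from the virgin state $z_v$ yields $z(\tau)$ at time $T$, with Lipschitz constant at most $2\max(L,C/T)$. By definition of $\tilde{\mathcal S}$, this is exactly what is needed to conclude.

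Since $z_0\in\tilde{\mathcal S}_L$, fix an $L$-Lipschitz $\bar u_0:[0,T]\to\R$ with $\bar u_0(0)=0$ and $z_0(\p)=[h^\p(\bar u_0,z_v(\p))](T)$; in particular $\mathrm{Var}(\bar u_0)\le LT$. Set $v_*:=\bar u_0(T)$: the maximal antimonotone graph $\xi_{z_0}$ meets the main diagonal $\p_1=\p_2$ precisely at $(v_*,v_*)$, so arbitrarily close to this corner one finds thresholds $\p$ with $z_0(\p)=+1$ and $\p_1$ arbitrarily close to $v_*$ from below, as well as thresholds with $z_0(\p)=-1$ and $\p_2$ arbitrarily close to $v_*$ from above. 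The constraints $u(0)\ge\p_1$ and $u(0)\le\p_2$ extracted from $(u(0),z_0(\p))\in\bar{\LL}_\p$ then force $u(0)=v_*=\bar u_0(T)$ in the limit, which is the key identity for the concatenation below.

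Next, run the piecewise-linear interpolation from the proof of Lemma \ref{lemma: propPreis1} on the reduced memory sequence of $u|_{[0,\tau]}$ (of total variation at most $C$), but distributing the resulting segments over $[0,T]$ rather than $[0,\tau]$; this produces a $(C/T)$-Lipschitz $u_{\mathrm{RMS}}:[0,T]\to\R$ with $u_{\mathrm{RMS}}(0)=u(0)$, $\mathrm{Var}(u_{\mathrm{RMS}})\le C$ and $RMS(u_{\mathrm{RMS}};[0,T])=RMS(u|_{[0,\tau]};[0,\tau])$. By Remark \ref{rmk:scelta} (the Preisach output depends on the input only through its RMS), $u_{\mathrm{RMS}}$ acts on $z_0$ identically to $u|_{[0,\tau]}$. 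Define $\hat u:[0,2T]\to\R$ by $\hat u=\bar u_0$ on $[0,T]$ and $\hat u(t)=u_{\mathrm{RMS}}(t-T)$ on $[T,2T]$: since $\bar u_0(T)=u(0)=u_{\mathrm{RMS}}(0)$, $\hat u$ is continuous and piecewise Lipschitz with $\mathrm{Var}(\hat u)\le LT+C$, and by the semigroup property it generates $z(\tau)$ from $z_v$ at time $2T$.

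Finally, compute $RMS(\hat u;[0,2T])$, whose total variation is dominated by $\mathrm{Var}(\hat u)\le LT+C$. Applying once more the piecewise-linear interpolation of Lemma \ref{lemma: propPreis1} to this reduced memory sequence, but over target interval $[0,T]$, yields $\tilde u:[0,T]\to\R$ starting at $\tilde u(0)=0$, with the same RMS as $\hat u$ and with constant slope magnitude $\mathrm{Var}(RMS)/T\le(LT+C)/T=L+C/T\le 2\max(L,C/T)$. By Remark \ref{rmk:scelta}, $\tilde u$ generates $z(\tau)$ from $z_v$ on $[0,T]$, which gives $z(\tau)\in\tilde{\mathcal S}_{L+C/T}\subset\tilde{\mathcal S}_{2\max(L,C/T)}$. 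The main obstacle is the compatibility identification $u(0)=\bar u_0(T)$ of the first step: without it, $\hat u$ would have a jump at $T$ of uncontrolled amplitude, spoiling the additive variation bound; once that identity is secured, the remaining steps are direct applications of Lemma \ref{lemma: propPreis1} and of the RMS-only dependence of the Preisach operator.
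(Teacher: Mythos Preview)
Your argument is correct and follows essentially the same strategy as the paper: concatenate an $L$-Lipschitz generator of $z_0$ with the $(C/T)$-Lipschitz RMS interpolation of $u|_{[0,\tau]}$, then fit the result into $[0,T]$. The paper does this in one step by time-compressing both pieces by a factor of $2$ (first half $u^0(2t)$, second half $(u^\tau)_{\mathrm{RMS}}(2t-T)$), directly yielding Lipschitz constant $2\max(L,C/T)$. You instead first concatenate on $[0,2T]$, then take a second RMS and re-interpolate over $[0,T]$; this detour buys you the slightly sharper constant $L+C/T$ before you relax it to $2\max(L,C/T)$.

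One genuine improvement in your write-up: you make explicit the compatibility identity $u(0)=\bar u_0(T)$ needed for continuity at the junction, and you justify it correctly from the structure of $\xi_{z_0}$ near the diagonal. The paper's proof relies on this identity implicitly without comment. A small quibble: the RMS-only dependence of the Preisach configuration is stated in the paragraph preceding Lemma~\ref{lemma: propPreis1}, not in Remark~\ref{rmk:scelta}; and Lemma~\ref{lemma: propPreis1} is formulated for piecewise constant inputs, so your second application of it to the continuous $\hat u$ needs the obvious remark that the interpolation construction works verbatim for any function with a finite reduced memory sequence.
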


\begin{proof}
By Definition \ref{def:S}, there exists a $L$-Lipschitz function $u^0:[0,T]\to\mathbb{R}$ such that $z_0(\rho)=[h^\rho(u^0,z_v(\rho))](T)$. We define $\tilde{u}:[0,T] \to \R$ as \[\tilde{u}(t):=\begin{cases}
    u^0(t) \quad & t\in \left[0,\frac{T}{2}\right],\\
    (u^{\tau})_{RMS}(2t-T) \quad & t\in \left[\frac{T}{2}, T\right],
\end{cases}\] where $u^{\tau}:[0,T]\to \R$ is \[u^\tau(t):=\begin{cases}
    u(t) \quad& t\leq \tau,\\
    u(\tau) \quad& t>\tau.
\end{cases}\] The conclusion follows from Lemma \ref{lemma: propPreis1} as $[h^\rho(u,z_0(\rho))](\tau) = [h^\rho(\tilde{u},z_v(\rho))](T)$.

\end{proof}

We define now $\tilde{\mathcal B}_L^{(n)}$ the set of graphs of $\tilde{\mathcal B}_L$ consisting of a finite number of segments, with vertices belonging to the set $Q_n:=(A_n\times A_n) \cap \T$ with $A_n=\{k 2^{-n} ~|~k\in \Z\}$ and $\tilde{\mathcal S}_L^{(n)}$ is the set of corresponding configurations. This set will be used when constructing the wave-front tracking approximation in Section \ref{S5}. 

\begin{lem}\label{lemma: propPreis4}
    Let $\xi_z\subset \tilde{\mathcal B}_L$ be an antimonotone graph composed by a finite number of horizontal and vertical segments. Then for every $n\in\N$ there exists an antimonotone graph $\xi_{z_n} \in \tilde{\mathcal B}_L^{(n)}$ such that \begin{equation}
        d(\xi_z,\xi_{z_n})\leq c_1 2^{-n},
    \end{equation} with $c_1$ depending only on the triangle $\T$. 
\end{lem}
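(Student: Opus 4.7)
The plan is to construct $\xi_{z_n}$ by snapping each horizontal segment of $\xi_z$ to the nearest grid height and each vertical segment to the nearest grid position, using a directed rounding rule so that the new staircase is still generated by an $L$-Lipschitz input. Enumerate the horizontal segments of $\xi_z$ by their heights $h_1>h_2>\cdots>h_p$ and the verticals by their positions $v_1<v_2<\cdots<v_q$, which interleave along the staircase from the triangle boundary down to the diagonal endpoint $(\bar u,\bar u)$. Set
\[
\tilde h_i:=\lfloor 2^n h_i\rfloor\, 2^{-n},\qquad \tilde v_j:=\lceil 2^n v_j\rceil\, 2^{-n},\qquad \tilde u:=\lfloor 2^n \bar u\rfloor\, 2^{-n},
\]
and build $\xi_{z_n}$ with the same combinatorial structure using the rounded heights and positions (collapsing any segment whose endpoints coincide after rounding, and similarly for the last segment if it degenerates at the diagonal endpoint). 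By construction every corner $(\tilde v_j,\tilde h_i)$ lies in $Q_n$; and the monotonicity of $\lfloor\cdot\rfloor,\lceil\cdot\rceil$ preserves $\tilde h_1\geq\tilde h_2\geq\cdots$ and $\tilde v_1\leq\tilde v_2\leq\cdots$, so $\xi_{z_n}$ remains antimonotone.

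Next I would check $\xi_{z_n}\in\tilde{\mathcal B}_L$. Let $u$ be an $L$-Lipschitz input generating $\xi_z$; its RMS is, up to the initial value, the interleaved sequence $\ldots,h_i,v_j,h_{i+1},\ldots,\bar u$ and has total variation $\leq LT$. The directed rounding gives $\tilde h_i\leq h_i$ and $\tilde v_j\geq v_j$, so each consecutive difference of the rounded sequence satisfies $|\tilde h_i-\tilde v_j|=\tilde h_i-\tilde v_j\leq h_i-v_j$; hence the rounded RMS also has total variation $\leq LT$. Choosing any piecewise constant input on $[0,T]$ that realises this rounded RMS and applying Lemma \ref{lemma: propPreis1} yields an $L$-Lipschitz input generating $\xi_{z_n}$, so $\xi_{z_n}\in\tilde{\mathcal B}_L^{(n)}$.

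For the distance estimate, the symmetric difference between the $+1$-regions of $z$ and $z_n$ inside $\T$ is a union of rectangular strips: along each horizontal segment of length $\ell$ in $\xi_z$, a strip of area $\leq\ell\cdot 2^{-n}$ (height shifted by at most $2^{-n}$), and similarly for each vertical, plus a corner correction of area $O(2^{-2n})$ near the diagonal endpoint. Since the horizontals project onto pairwise disjoint sub-intervals of $[-a,a]$, their total length is $\leq 2a$, and likewise for the verticals; hence the symmetric difference has area $\leq 4a\cdot 2^{-n}+O(2^{-2n})$. Because $d(z,z_n)=2\,|\{\rho\in\T:z(\rho)\neq z_n(\rho)\}|$, this gives $d(\xi_z,\xi_{z_n})\leq c_1\, 2^{-n}$ for a constant $c_1$ depending only on $a$, i.e.\ only on $\T$.

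The main obstacle is the $\tilde{\mathcal B}_L$ membership: a naive rounding could produce an RMS whose total variation exceeds $LT$, forcing the generating input to have Lipschitz constant larger than $L$. The directed choice (maxima rounded down, minima rounded up) is the essential trick that makes the variation non-increasing, so that Lemma \ref{lemma: propPreis1} returns a Lipschitz input with the correct constant $L$; the remaining checks on antimonotonicity and on the distance bound are then elementary.
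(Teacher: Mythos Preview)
Your proposal is correct and follows essentially the same approach as the paper: both round the maxima of the reduced memory sequence down and the minima up (your $\tilde h_i=\lfloor 2^n h_i\rfloor 2^{-n}$, $\tilde v_j=\lceil 2^n v_j\rceil 2^{-n}$ coincide with the paper's $\tilde\eta^i=\max\{t\in A_n:t\le\eta^i\}$, $\tilde\eta_i=\min\{t\in A_n:t\ge\eta_i\}$), observe that this directed rounding does not increase the total variation so an $L$-Lipschitz generating input still exists, and bound $d(\xi_z,\xi_{z_n})$ by the area of a $2^{-n}$-strip around $\xi_z$. The only cosmetic difference is that you phrase the estimate via disjoint projections of the segments while the paper packages the same bound as $2|S_n|\le 4(a+1)2^{-n}$.
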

\begin{proof}
    Consider the reduced memory sequence $\{\eta^1,\eta_2,\dots\}$ (or $\{\eta_1,\eta^2,\dots\}$) of the input $u$ that generates $\xi_z$. Now redefine $\tilde{\eta}^i:=\max\{ t \in A_n\,|\, t\leq \eta^i\}$ and $\tilde{\eta}_i:=\min\{ t \in A_n\,|\, t\geq \eta_i\}$ and consider the new sequence $\{\tilde{\eta}^1,\tilde{\eta}_2,\dots\}$ (or $\{\tilde{\eta}_1,\tilde{\eta}^2,\dots\}$). Observe that, at least for $n$ small, $\{\tilde{\eta}^1,\tilde{\eta}_2,\dots\}$ might not be of the form of a reduced memory sequence as it could happen that $\tilde{\eta} ^{i} \leq \tilde{\eta} _{i+1}$ or $\tilde{\eta} _{i} \geq \tilde{\eta} ^{i+1}$ for some $i$, but in that case $|\eta ^j - \eta _j| \leq 2^{-n}$ for $j\geq i$ so we could simply truncate the new sequence at $\tilde{\eta}^i$ or $\tilde{\eta}_i$. Modifying the initial input $u$, it is easy to construct the input $u_n$ that is still $L-$Lipschitz with reduced memory sequence $\{\tilde{\eta}^1,\tilde{\eta}_2,\dots\}$ (or $\{\tilde{\eta}_1,\tilde{\eta}^2,\dots\}$), that hence generates a configuration $z_n \in \tilde{\mathcal S}_L^{(n)}$ and graph $\xi_{z_n}\in\tilde{\mathcal B}_L^{(n)}$. By denoting the vertices $v_1,\dots, v_k$ of $\xi_z$ it is easy to see that the vertices of $\xi_{z_n}$ belong to the strip \[ S_n := \bigcup_{(x',y')\in \xi_z} \{ (x,y) ~|~ |x-x'|\leq 2^{-n}, |y-y'|\leq 2^{-n}\}\cap \T\] hence \begin{equation*} d(\xi_z,\xi_{z_n})\leq 2~|S_n|\leq 4 ~ 2^{-n}(a+2^{-n}) \leq 4  (a+1) 2^{-n}=c_1 2^{-n},
    \end{equation*} see also Figure \ref{fig: regioneSn}.
\end{proof}

    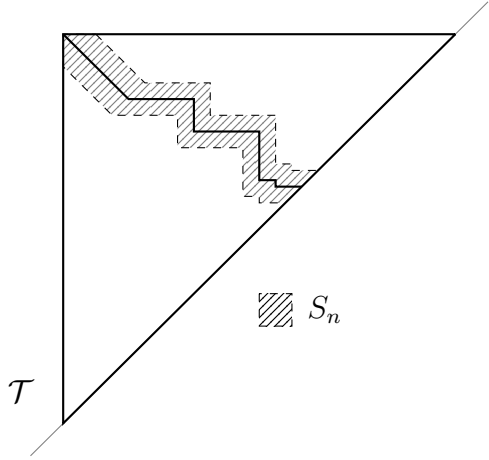
\begin{figure}
        \centering
        \begin{tikzpicture}[scale =0.43]
        
         \filldraw[ dashed, thin,pattern=north east lines, pattern color=black!50] (-5,6)--(-3.5,4.5)--(-1.5,4.5)--(-1.5,3.5)--(0.5,3.5)--(0.5,2)--(1,2)--(1,1.8)--(1.8,1.8)--(0.8,0.8)--(0,0.8)--(0,1)--(-0.5,1)--(-0.5,2.5)--(-2.5,2.5)--(-2.5,3.5)--(-4.5,3.5)--(-6,5)--(-6,6)--cycle;
    
        \draw[-,gray] (7,7)--(-7,-7);
        \draw[-,thick] (6,6)--(-6,-6)--(-6,6)--(6,6);
        \draw[-, thick] (-6,6)--(-4,4)--(-2,4)--(-2,3)--(0,3)--(0,1.5)--(0.5,1.5)--(0.5,1.3)--(1.3,1.3);
        \filldraw[dashed,thin ,pattern=north east lines, pattern color=black] (0,-2)--(0,-3)--(1,-3)--(1,-2)--cycle;
        \node at (2,-2.5) {$S_n$};
        \node at (-7.3,-5) {$\T$};
         
        \end{tikzpicture}
       
        \caption{Referring to the proof of Lemma \ref{lemma: propPreis4}, we highlight, inside the triangle $\T$, the region $S_n$ where $\xi_{z_n}$ belongs to. The filled graph is the original  graph $\xi_z$.}
         \label{fig: regioneSn}
    \end{figure}

\begin{rmk}\label{rmk: state}
When we introduce the operator $\HM$ in a PDE, we have to define it on functions $u$ both depending on a space variable $x\in\R^n$, for some $n$, and on time $t$. In this case, for every fixed $x\in\R^n$, we see $u(x,\cdot)$ as a function of time only, and then we define the output as \begin{equation}
     w(x,t):= [\HM(u(x,\cdot),z_0(x))](t), \quad \text{a.e}\ x, \forall\ t,
\end{equation} 

\noindent
where the initial configuration of the output is now a given function depending on $x$.
\end{rmk}
    

\section{Weak formulation for the Cauchy problem}\label{S3}

We present and explain the weak formulation for our problem

\begin{equation}\label{eq: hpde}
    \begin{cases}
        u_t+w_t+u_x=0\quad &\text{in } \R \times [0,T),\\
        w=[\HM(u,z_0)] \quad &\text{in } \R \times [0,T),\\
        u(x,0)=u_0(x) \quad &\text{in } \R ,\\
        z_0(x) \in \tilde{\mathcal S}_L\quad &\text{in } \R,
    \end{cases}
\end{equation} where the state-dependent hysteresis operator is defined as in Remark \ref{rmk: state}, $u_0$ and $z_0$ are compatible for each $x\in \R$ and $L>0$.

\begin{defi}\label{def: weak}
    A couple of functions $(u,w)$ with $u,w \in C^0([0,T];L^1_{loc}(\R))$ is a weak solution to \eqref{eq: hpde} if: \begin{enumerate}[i)]
    \item it satisfies the following weak formulation of the PDE \begin{equation}\label{eq: hweaksol}
        \int\limits_0^{+\infty} \int\limits_{-\infty }^{+\infty}[(u+w) \phi_t+u \phi_x] ~dx~ dt +\int\limits_{-\infty}^{+\infty} (u_0(x) +w_0(x))\phi(x,0) ~dx=0,
    \end{equation} for every $C^1$ function $\phi$ with compact support in $\R \times [0,T)$, with $w_0(x)= \int_\T z_0(x)\,d\p;$
    \item there exists a function $z: \R \times [0,T) \to \tilde{\mathcal S}$ such that for almost every $(x,t)$ and almost every $\p=(\p_1,\p_2)\in{\cal P}$ we have \begin{equation}\label{eq: def11}
        w(x,t) = \int_{\T} z (x,t;\p) ~d\p,
    \end{equation}$|z(x,t;\p)|=1$ and \begin{equation}\label{eq: hdis}
            \begin{cases}
                (z(x,t; \p)-1)(u-\p_2) \geq 0 \\
                (z(x,t;\p)+1)(u-\p_1) \geq 0.
            \end{cases}
        \end{equation}  
    \item for almost every $x$ and $\p$, the distributional derivative $\frac{\partial z }{\partial t} (x, \cdot; \p)$ is a measure on $[0,T)$ (denoted in the same way) that satisfies \begin{equation}
        \label{eq: genweakhis}
        \frac{1}{2}\int_\R (u(x,t)^2-u_0(x)^2)~dx+\int_\R \int_\T \Psi_\p(z; (0,t)) ~d\p dx \leq 0
        \end{equation}  for almost every $t\in (0,T)$, where $\Psi_\p(z;(0,t))$ is as defined in \eqref{eq: weakhis}.
\end{enumerate}
\end{defi}
\begin{rmk} Condition i) is the standard weak integral formulation of the PDE. Condition ii) is the requirement that $w$ is the output of the Preisach operator with input $u$, being $z(\cdot,\cdot;\rho)$ the corresponding output of the Relay with threshold $\rho$. In condition iii), similarly to \cite{AVH1} and as in \cite{BFMS}, equation \eqref{eq: genweakhis} can be interpreted as an equivalent formulation of \eqref{eq: preisweak1}, in the case of $H^1$ functions which also depend on the state $x$. Indeed suppose that both $u, w$ are in $H^1(\R \times (0,T))$  solution of the PDE \eqref{eq: hpde} then \eqref{eq: genweakhis} reads as follows: \begin{equation}
        \int_\R \int_0^t u u_t~dtdx+\int_\R \int_\T \Psi_\p(z; (0,t)) ~d\p dx \leq 0
    \end{equation} which, by the PDE, \begin{equation}
        \int_\R \int_0^t u (-w_t-u_x)~dtdx+\int_\R \int_\T \Psi_\p(z; (0,t)) ~d\p dx \leq 0.
    \end{equation} Now because of \eqref{eq: def11} and since $u$ has compact support then we can conclude that \begin{equation}
        -\int_\R  \int_\T \left[ \int_0^t u\, z_t - \Psi_\p(z; (0,t))\right] ~d\p dx \leq 0,
    \end{equation} which is \eqref{eq: preisweak1} integrated on the whole domain $\T$ and generalized to the space dependence.
\end{rmk}

If the weak solution has a jump discontinuity on a curve $(s(t),t)$, e.g. say $u_-(t)\not= u_+(t)$ or $w_-(t)\not=w_+(t)$ then we get, in a standard way, the following (extended) \RH condition \begin{equation}\label{eq: hrhcond}
    \frac{u_-(t)-u_+(t)}{u_-(t)-u_+(t)+w_-(t)-w_+(t)} = s'(t).
\end{equation}

\noindent
In Section \ref{S6} we will also introduce an entropy condition which will ensure uniqueness.


\section{The Riemann problem}\label{S4}

We deal now with the Riemann problem for \eqref{eq: hpde}, thus we set initial conditions 
\begin{equation}\label{eq: datiR}
u_0= \begin{cases}
    u_l \quad x<0,\\
    u_r \quad x>0,
\end{cases} \quad z_0= \begin{cases}
    z_l \quad x<0,\\
    z_r \quad x>0.
\end{cases}
\end{equation}
We also suppose $z_l,z_r\in\tilde{\cal S}_L$ for some $L>0$, with $(u_0(x),z_0(x;\p))\in \LL_\p$ for a.e. $\p\in\T,$ and that $\xi_l,\xi_r\in \tilde{\mathcal B}_L$, the graphs associated to $z_l$ and $z_r$ respectively, consist of a finite collection of vertical and horizontal segments.
\par \textit{\textbf{Case 1: $\mathbf{u_l>u_r}.$}}
For fixed $x>0$, we are expecting the solution $u(x,\cdot)$ to increase from $u_r$ to $u_l$ as the discontinuity of the initial data propagates to the right. 
Let us denote by $M_i$ the ordinate of the horizontal segments of $\xi_r$ strictly above the ordinate $u_r$ and $m_i$ the abscissa of the vertical ones (see Figure \ref{fig: segmenti}). We order them in the following way: $M_1< \dots < M_N$ and $m_1> \dots > m_N$. For compatibility reasons we either have $u_r = m_1$ if the last segment of $\xi_r$ is vertical or $u_r=M_0$ for some $M_0< M_1$ if the last segment of $\xi_r$ is horizontal. 

\begin{figure}
        \centering
        \begin{tikzpicture}[scale =0.43]
         \begin{scope}
            
            \draw[dashed,gray] (-2,4)--(5,4) node[right]{$M_3$};
            \draw[dashed,gray] (0,3)--(5,3) node[right]{$M_2$};
            \draw[dashed,gray] (0.5,1.5)--(5,1.5) node[right]{$M_1$};
    
            \draw[dashed,gray] (-2,3)--(-2,-5) node[below]{$m_3$};
            \draw[dashed,gray] (0,1.5)--(0,-5) node[below]{$m_2$};
            \draw[dashed,gray] (0.5,1.3)--(0.5,-5) node[below right]{$m_1$};
        
            \draw[-,gray] (7,7)--(-7,-7);
            \draw[-,thick] (6,6)--(-6,-6)--(-6,6)--(6,6);
            \draw[-] (-6,6)--(-4,4)--(-2,4)--(-2,3)--(0,3)--(0,1.5)--(0.5,1.5)--(0.5,1)--(1,1) node[below right]{$(u_r,u_r)$};

            \filldraw[black] (1,1) circle (2.5pt);
            
        \end{scope}
            \begin{scope}[xshift = 18cm]
        
            \draw[dashed,gray] (-2,4)--(5,4) node[right]{$M_3$};
            \draw[dashed,gray] (0,3)--(5,3) node[right]{$M_2$};
            \draw[dashed,gray] (0.5,1.5)--(5,1.5) node[right]{$M_1$};
    
            \draw[dashed,gray] (-2,3)--(-2,-5) node[below]{$m_3$};
            \draw[dashed,gray] (0,1.5)--(0,-5) node[below]{$m_2$};
            \draw[dashed,gray] (0.5,0.5)--(0.5,-5) node[below right]{$m_1$};
        
            \draw[-,gray] (7,7)--(-7,-7);
            \draw[-,thick] (6,6)--(-6,-6)--(-6,6)--(6,6);
            \draw[-] (-6,6)--(-4,4)--(-2,4)--(-2,3)--(0,3)--(0,1.5)--(0.5,1.5)--(0.5,0.5) node[below right]{$(u_r,u_r)$};

            \filldraw[black] (0.5,0.5) circle (2.5pt);
            
        \end{scope}
        
        \end{tikzpicture}
        \caption{Here we highlight the notation used when dealing with the Riemann problem; on the left the case when the last segment of $\xi_r$ is horizontal, on the right when it is vertical.}
        \label{fig: segmenti}
\end{figure}
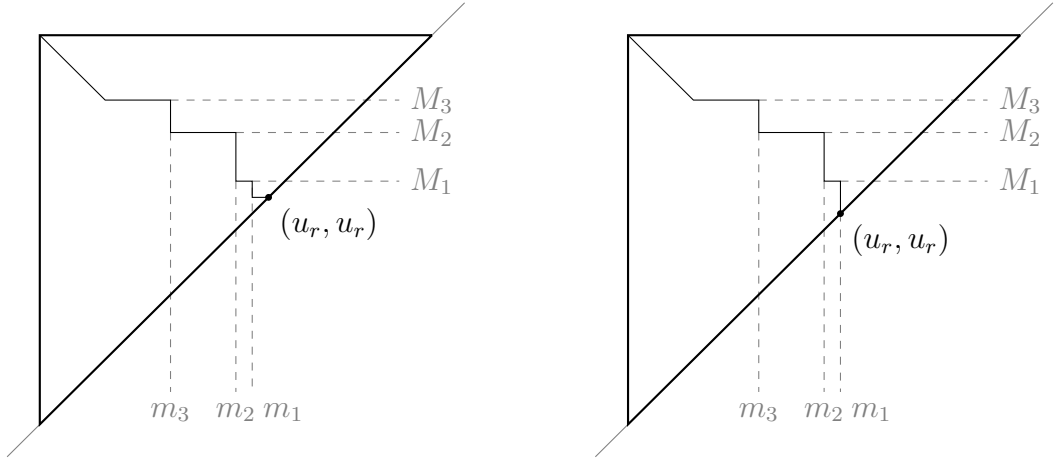

\par While $u(x,\cdot)$ increases from $u_r$ to $u_l$, the graph $\xi(x,\cdot)$ will move up as explained in Section \ref{S2}. Assume, for the moment, that $t\mapsto u(x,t)$ is absolutely continuous. For $u_r \leq u(x,\cdot) \leq M_1$ we have that $w(x,\cdot)$ increases from $w_r$ adding a quantity equal to twice the area of a trapezoid if $u_r=M_0$ (or of a triangle if $u_r=m_1$). Hence the time derivative $w_t(x,\cdot)$, while $u\leq M_1$, is equal to twice the length of an horizontal segment, which is $u(x,\cdot)-m_1$, times $u_t(x,\cdot)$. 
Using the same reasoning for every $M_i$ such that $M_i\leq u_l$, if $u_l\leq M_k$ we get that \begin{equation}
    w_t = \begin{cases} 
        2(u-m_1) u_t &\quad u_r < u < M_{1}\\
        2(u-m_2) u_t &\quad M_1 < u < M_2 \\
        \dots \\
        2(u-m_i)u_t &\quad M_{i-1} < u< M_i \\
        \dots \\
        2(u-m_k)u_t &\quad M_{k-1} < u < u_l \leq M_k.
    \end{cases}
\end{equation}
So we can rewrite the equation as follows \begin{equation}\label{eq: pdeg}
    u_t g(u) + u_x = 0,
\end{equation} with 
\begin{equation}
    g(u) = \begin{cases}
        1+2(u-m_1)&\quad u_r < u < M_{1}\\
        1+2(u-m_2)  &\quad M_1 < u < M_2 \\
        \dots \\
        1+2(u-m_i) &\quad M_{i-1} < u< M_i \\
        \dots \\
        1+2(u-m_k) &\quad M_{k-1} < u < u_l \leq M_k.
    \end{cases}
\end{equation}
By classic theory on conservation laws the solution to such PDE consists of a finite number of rarefaction waves. Indeed, \eqref{eq: pdeg} can be rewritten as follows (see \cite[Section~3.1]{DAFLIBRO} for equivalence of the two formulations in the case of non-classical solutions),
\begin{equation}\label{eq: flux}
    u_t+f(u)u_x=0,
\end{equation} where $f=1/g$ is piecewise decreasing, hence the flux is piecewise concave. Since $u_l>u_r$, by computing $f^{-1}(x/t)$ we get, \begin{equation}\label{eq: uRP}
    u(x,t) = \begin{cases}
        u_l & \quad \frac{x}{t}< \frac{1}{1+2(u_l-m_k)} \\ 
        \frac{1}{2}\left(\frac{t}{x}-1\right)+m_k  &\quad \frac{1}{1+2(u_l-m_k)} < \frac{x}{t} < \frac{1}{1+2(M_{k-1}-m_k)}\\
        M_{k-1} &\quad \frac{1}{1+2(M_{k-1}-m_k)} < \frac{x}{t} < \frac{1}{1+2(M_{k-1}-m_{k-1})}\\
        \frac{1}{2}\left(\frac{t}{x}-1\right)+m_{k-1}  &\quad \frac{1}{1+2(M_{k-1}-m_{k-1})} < \frac{x}{t} < \frac{1}{1+2(M_{k-2}-
m_{k-1})}\\
        \dots\\
        \dots\\
        M_1 &\quad \frac{1}{1+2(M_1-m_2)} < \frac{x}{t} < \frac{1}{1+2(M_1-m_1)}\\
        \frac{1}{2}\left(\frac{t}{x}-1\right)+m_1  &\quad \frac{1}{1+2(M_1-m_1)} < \frac{x}{t} < \frac{1}{1+2(u_r-m_1)}\\
        u_r &\quad \frac{1}{1+2(u_r-m_1)}<\frac{x}{t}
    \end{cases}
\end{equation}

\begin{figure}
        \centering
        \begin{tikzpicture}[scale =0.43]
        
        \draw[dashed,gray] (0.5,1.5)--(5,1.5) node[right,  yshift = -3pt]{$M_1$};

        \draw[dashed,gray] (0.5,0.5)--(0.5,-5) node[below]{$m_1$};

        \node[anchor = west] at (-2,2) {$\xi_r$};
        \node[anchor = west] at (1,3.2) {$\xi_{z^*}$};
    
        \draw[-,gray] (7,7)--(-7,-7);
        \draw[-,thick] (6,6)--(-6,-6)--(-6,6)--(6,6);
        \draw[-] (-6,6)--(-4,4)--(-2,4)--(-2,3)--(0,3)--(0,1.5)--(0.5,1.5)--(0.5,0.5) node[below right]{$(u_r,u_r)$};
        \draw[dash dot] (-6,6)--(-4,4)--(-2,4)--(-2,3)--(0,3)--(0,2.5)--(2.5,2.5);

        \filldraw[black] (2.5,2.5) circle (2.5pt) node[ yshift = -3pt, right]{$(u_l,u_l)$};

        \filldraw[black] (0.5,0.5) circle (2.5pt);

        \draw[gray, ->, xshift= 16cm, yshift = -6cm] (0,-1)--(0,13);
        \draw[gray, ->, xshift= 16cm, yshift = -6cm] (-2,0)--(6,0);
        \draw[-,xshift= 16cm, yshift = -6cm] (0,0)--(2,12);
        \draw[dashed, xshift= 16cm, yshift = -6cm] (0,0)--(0,12);
        \draw[-,xshift= 16cm, yshift = -6cm] (0,0)--(3,12);
        \draw[-,xshift= 16cm, yshift = -6cm] (0,0)--(4,12);
        \draw[-,xshift= 16cm, yshift = -6cm] (0,0)--(5,5);
        \fill[xshift= 16cm, yshift = -6cm,pattern=north east lines, pattern color=black, line width=0.5mm] (0,0)--(5,5)--(5,12)--(4,12);
        \fill[xshift= 16cm, yshift = -6cm, pattern=north east lines, pattern color=black, line width=0.5mm] (0,0)--(2,12)--(3,12);

        \fill[xshift= 18cm, yshift = -6cm,pattern=north east lines, pattern color=black, line width=0.5mm] (5,3)--(5,4)--(4,4)--(4,3)--(5,3);
        \draw[black,xshift= 18cm, yshift = -6cm] (5,3)--(5,4)--(4,4)--(4,3)--(5,3) node[above right]{rarefaction};
        \node[anchor = west] at (19,-4) {$(u_r,z_r)$};
        \node[anchor = west] at (12,0) {$(u_l,z_l)$}; 
        \node[anchor = west] at (15.65,6.2) {\tiny{$(u_l,z^*)$}};
        
        \end{tikzpicture}
        \caption{ Here we denote by $z^*$ the final configuration obtained starting from $z_r$ with the input $u$ increasing from $u_r$ to $u_l$; on the left the relationship between $u_l,u_r,\xi_{r}$ and $\xi_{z^*}$ (represented by the dot-dashed line); on the right the couple $(u,z)$ solution to the Riemann problem consisting of a union of rarefaction waves, separated by constant pieces and a possible vertical discontinuity for $z$.}
        \label{fig: rarefazioni}
\end{figure}
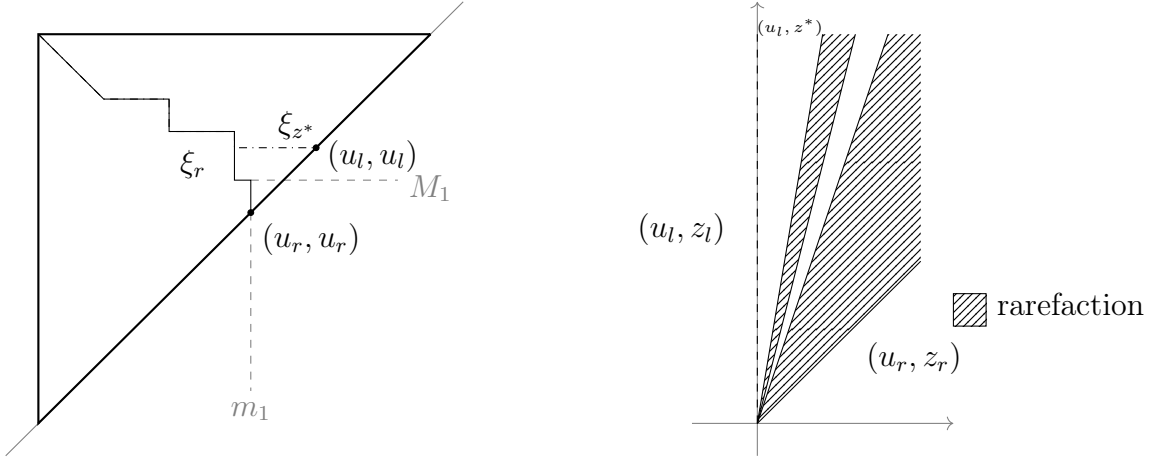
Once we get $u$, $z(x,t;\p)=[h^{\p}(u(x,\cdot),z_0(x;\p))](t)$ and  $w=[\HM(u,z_0)]$, can be easily computed and both will be given by a finite number of rarefaction waves, constant pieces and a possible discontinuity at $\{x=0\}$. See Figure \ref{fig: rarefazioni} for an example of a solution to the Riemann problem.

\begin{rmk}
    By setting $ w = \int_\T z\,d\p$, the couple $(u,w)$ constructed above satisfies the weak formulation \eqref{eq: hweaksol} associated to \eqref{eq: hpde}, \eqref{eq: datiR} for any $T>0$. Moreover, the relationship $w = [\HM(u,z_0)]$ holds strongly for $x\not=0,$ as $u(x,\cdot)$ is continuous.
\end{rmk}

If instead $u_l>M_N$ then length of the segment when $M_N\leq u\leq u_l$ is equal to $2u$ hence the change of area is $w_t=4u_t$. Hence, with a slight change in $w_t$, 
\begin{equation}
    w_t = \begin{cases}
        2(u-m_1) u_t &\quad u_r < u < M_{1}\\
        2(u-m_2) u_t &\quad M_1 < u < M_2 \\
        \dots \\
        2(u-m_i)u_t &\quad M_{i-1} < u< M_i \\
        \dots \\
        4uu_t &\quad M_{N} < u < u_l,
    \end{cases}
\end{equation} we can compute similarly as above $u,z$ and $w$. 

\textbf{\textit{Case $\mathbf{u_l<u_r}$}}. This case is symmetric to the previous one. Indeed, by following the previous reasoning, one can show that in this case $f$ appearing in \eqref{eq: flux} is instead piecewise increasing. However, since $u_l<u_r$, the resolution of the Riemann problem yields to a solution $u$ (consequently also $z$ and $w$) still composed by a finite number of rarefaction wave.

The following lemma is used to prove next Proposition \ref{prop: vartot}.
\begin{lem}\label{lemma: zvar1}
    Consider an initial configuration $z_0\in \tilde{\mathcal S}$ and a monotone evolution $u : [0,T]\to \R$. Given $z(\cdot):[0,T]\to \tilde S$ the configuration associated to the evolution $z(\cdot\,;\p)=[h^\p(u,z_0)](\cdot)$, then \begin{equation}
        {Var}_{[0,T]}(z) = d(z(T),z_0),
    \end{equation} where ${Var}_{[0,T]}(z)$, is as defined in Definition \ref{def: zL1BV}.
\end{lem}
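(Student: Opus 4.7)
The plan is to reduce the metric-space variation to a fiberwise computation over the Preisach plane, and then to exploit a simple observation about individual Relays driven by a monotone input.

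First I would recall the definition from the Appendix: for a function with values in $(\tilde{\mathcal S},d)$,
\[
\mathrm{Var}_{[0,T]}(z)=\sup\sum_{i=1}^{n} d\bigl(z(t_i),z(t_{i-1})\bigr),
\]
the supremum being taken over partitions $0=t_0<t_1<\cdots<t_n=T$. Using the definition $d(z_1,z_2)=\int_{\T}|z_1(\p)-z_2(\p)|\,d\p$ and Tonelli (non-negative integrands), for any such partition
\[
\sum_{i=1}^{n} d\bigl(z(t_i),z(t_{i-1})\bigr)=\int_{\T}\sum_{i=1}^{n}\bigl|z(t_i;\p)-z(t_{i-1};\p)\bigr|\,d\p.
\]

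The key step — and the only one with content — is the observation that for each fixed $\p=(\p_1,\p_2)\in\T$ the scalar function $t\mapsto z(t;\p)=[h^\p(u,z_0(\p))](t)$ has at most one jump in $[0,T]$. Indeed, if $u$ is non-decreasing, the Relay $\bar h^\p$ can only switch from $-1$ to $+1$ (when $u$ crosses $\p_2$ from below), and once in state $+1$ a non-decreasing input cannot trigger any further switch; the non-increasing case is symmetric. Hence $z(\cdot;\p)$ is itself monotone in time and takes at most two values, so for any partition
\[
\sum_{i=1}^{n}\bigl|z(t_i;\p)-z(t_{i-1};\p)\bigr|=\bigl|z(T;\p)-z_0(\p)\bigr|.
\]

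Combining the two displays gives, for every partition,
\[
\sum_{i=1}^{n} d\bigl(z(t_i),z(t_{i-1})\bigr)=\int_{\T}\bigl|z(T;\p)-z_0(\p)\bigr|\,d\p=d\bigl(z(T),z_0\bigr),
\]
a value independent of the partition. Taking the supremum yields $\mathrm{Var}_{[0,T]}(z)=d(z(T),z_0)$, as claimed. The only non-trivial ingredient is the single-switch property of a Relay under a monotone input, which is immediate from the very definition of $h^\p$; everything else is bookkeeping and Tonelli.
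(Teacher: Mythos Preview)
Your proof is correct and takes a somewhat different route from the paper's. The paper establishes the three-point additivity
\[
d\bigl(z(t_3),z(t_1)\bigr)=d\bigl(z(t_3),z(t_2)\bigr)+d\bigl(z(t_2),z(t_1)\bigr)
\]
directly at the level of the metric $d$, by identifying disjoint strips $R_1,R_2\subset\T$ on which successive configurations can differ; once this additivity is in hand, every partition sum telescopes to $d(z(T),z_0)$. You instead descend immediately to the fiberwise level: each individual Relay output $z(\cdot;\p)$ is monotone under a monotone input, so its scalar variation over any partition equals $|z(T;\p)-z_0(\p)|$, and Tonelli reassembles these into the integrated statement. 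Both arguments ultimately rest on the same fact (a monotone input triggers at most one switch per Relay), but your version bypasses the explicit geometric bookkeeping in the Preisach plane and is a bit more direct, while the paper's formulation ties in visually with the strip/region pictures that are reused in the proof of Proposition~\ref{prop: vartot}.
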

\begin{proof}
    It is sufficient to prove that for every $t_1<t_2<t_3$ we have that \[d(z(t_3),z(t_1))=d(z(t_3),z(t_2))+d(z(t_2),z(t_1)).\] Suppose for example that $u$ is increasing. Denote by $u_i$ and $z_i$ respectively $u(t_i)$ and $z(t_i)$ for $i=1,2,3.$ We have that $z_1,z_2$ differ only in the strip $R_1=[-a,u_2]\times [u_1,u_2] \cap \T$, see Figure \ref{fig: lemmavar}. Hence \[d(z_1,z_2)=\int_{R_1} |z_1-z_2| ~d\p.\] We also have a region $R_2=[-a,u_3]\times[u_2,u_3] \cap \T,$ on which $z_2$ differs from $z_3$. Moreover $z_1$ and $z_3$ differ on $[-a,u_3]\times[u_1,u_3]\cap \T $ which is the disjoint union of $R_1$ and $R_2$ because of the monotonicity of $u$ and semigroup property \eqref{eq: semigroup}, see again \ref{fig: lemmavar}. Hence \[\begin{split} d(z_1,z_3)&=\int_{R_1} |z_1-z_3| ~d\p + \int_{R_2} |z_1-z_3| ~d\p \\&= \int_{R_1} |z_1-z_2| ~d\p + \int_{R_2} |z_2-z_3| ~d\p=d(z_1,z_2)+d(z_2,z_3).\end{split}\] 
\end{proof}

\begin{figure}
        \centering
        \begin{tikzpicture}[scale=0.23]

        \node[above] at (-6.2,6) {$z_1$};
        \node[above] at (9.8,6) {$z_2$};
        \node[above] at (25.8,6) {$z_3$};

        \draw[-, gray] (1.3,1.3)--(-6,1.3)--(-6,2)--(2,2)--cycle;
        \fill[pattern=north east lines, pattern color=gray, line width=0.5mm] (1.3,1.3)--(-6,1.3)--(-6,2)--(2,2)--cycle;
        \draw[-,gray] (3.5,3.5)--(-6,3.5)--(-6,2)--(2,2)--cycle;
        \fill[pattern=horizontal lines, pattern color=gray, line width=0.5mm] (3.4,3.4)--(-6,3.4)--(-6,2)--(2,2)--cycle;
    
        \draw[-,gray] (7,7)--(-7,-7);
        \draw[-,thick] (6,6)--(-6,-6)--(-6,6)--(6,6);
        \draw[-] (-6,6)--(-4,4)--(-2,4)--(-2,3)--(0,3)--(0,1.5)--(0.5,1.5)--(0.5,1.3)--(1.3,1.3) node[below right]{$(u_1,u_1)$};
        \filldraw[black] (1.3,1.3) circle (3.5pt);
        
        \draw[-,gray,xshift=16cm] (1.3,1.3)--(-6,1.3)--(-6,2)--(2,2)--cycle;
        \fill[pattern=north east lines, pattern color=gray, line width=0.5mm,xshift=16cm] (1.3,1.3)--(-6,1.3)--(-6,2)--(2,2)--cycle;
        \draw[-,gray,xshift=16cm] (3.5,3.5)--(-6,3.5)--(-6,2)--(2,2)--cycle;
        \fill[pattern=horizontal lines, pattern color=gray, line width=0.5mm,xshift=16cm] (3.4,3.4)--(-6,3.4)--(-6,2)--(2,2)--cycle;
        \draw[-,gray,xshift=16cm] (7,7)--(-7,-7);
        \draw[-,thick,xshift=16cm] (6,6)--(-6,-6)--(-6,6)--(6,6);
        \draw[-,xshift=16cm] (-6,6)--(-4,4)--(-2,4)--(-2,3)--(0,3)--(0,2)--(2,2) node[below right]{$(u_2,u_2)$};
        \filldraw[black,xshift=16cm] (2,2) circle (3.5pt);

        \draw[-,gray,xshift=32cm] (1.3,1.3)--(-6,1.3)--(-6,2)--(2,2)--cycle;
        \fill[pattern=north east lines, pattern color=gray, line width=0.5mm,xshift=32cm] (1.3,1.3)--(-6,1.3)--(-6,2)--(2,2)--cycle;
        \draw[-,gray,xshift=32cm] (3.5,3.5)--(-6,3.5)--(-6,2)--(2,2)--cycle;
        \fill[pattern=horizontal lines, pattern color=gray, line width=0.5mm,xshift=32cm] (3.4,3.4)--(-6,3.4)--(-6,2)--(2,2)--cycle;
        \draw[-,gray,xshift=32cm] (7,7)--(-7,-7);
        \draw[-,thick,xshift=32cm] (6,6)--(-6,-6)--(-6,6)--(6,6);
        
        \draw[-,xshift=32cm] (-6,6)--(-4,4)--(-2,4)--(-2,3.5)--(3.5,3.5) node[below right]{$(u_3,u_3)$};
        \filldraw[black,xshift=32cm] (3.5,3.5) circle (3.5pt);

        \draw[black,xshift=32cm] (3,-3.3) circle (25pt) node[right]{$\, R_1$};
        \fill[pattern=north east lines, pattern color=gray, line width=0.5mm,xshift=32cm] (3,-3.3) circle (25pt);

        \draw[black,xshift=32cm] (3,-6) circle (25pt) node[right]{$\,R_2$};
        \fill[pattern=horizontal lines, pattern color=gray, line width=0.5mm,xshift=32cm] (3,-6) circle (25pt);

        \end{tikzpicture}
        
        \caption{The regions $R_1$ and $R_2$ in the proof of Lemma \ref{lemma: zvar1}}
        \label{fig: lemmavar}
\end{figure}
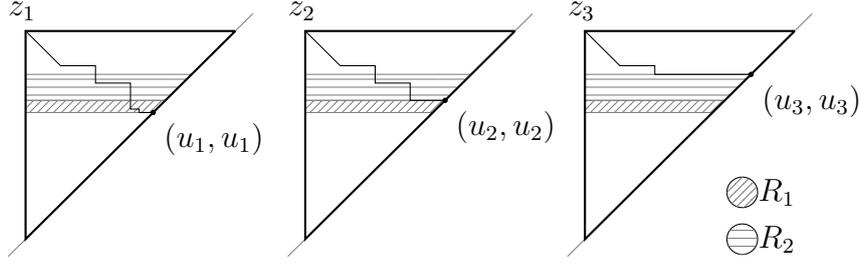

\begin{prop}\label{prop: vartot}
    Let $u$ and $z$ be solutions to the Riemann problem with initial data $u_0$ and $z_0$ we have that \begin{equation}
        Var(u(\cdot,t))=Var(u_0) \quad\text{and} \quad Var(z(\cdot,t))=Var(z_0)
    \end{equation}
    for every fixed $t\in [0,+\infty).$
\end{prop}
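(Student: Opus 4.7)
The plan is to compute both variations directly from the explicit Riemann solution of Section~\ref{S4}. We focus on Case 1 ($u_l > u_r$); Case 2 is handled by the same argument with the roles of $u_l, u_r$ interchanged and the monotone input taken in the opposite direction.

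From formula~\eqref{eq: uRP}, at any fixed $t > 0$ the function $x \mapsto u(x, t)$ is monotonically non-increasing in $x$: it equals $u_l$ for $x < \lambda_l t$ with $\lambda_l = 1/(1 + 2(u_l - m_k))$, then decreases through the alternating rarefaction arcs and the constant plateaus $M_{k-1}, \ldots, M_1$, and equals $u_r$ beyond the right edge of the fan. Hence $Var(u(\cdot, t)) = u_l - u_r = Var(u_0)$.

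For $z$, let $z^*$ denote the Preisach configuration obtained from $z_r$ by a monotone input rising from $u_r$ to $u_l$. For $x < 0$ the left initial data propagate along characteristics, so $z(x, t) = z_l$. For $0 < x < \lambda_l t$ one still has $u(x, t) = u_l$, but the Preisach state is now determined by the time history at that $x$, which (by rate-independence) is the monotone rise $u_r \to u_l$ starting from $z_r$; hence $z(x, t) = z^*$. Therefore $z(\cdot, t)$ exhibits a single jump of size $d(z_l, z^*)$ at $x = 0$, is constant equal to $z^*$ on $(0, \lambda_l t)$, undergoes a continuous monotone evolution on the fan parametrized by $u(x, t)$ decreasing from $u_l$ to $u_r$, and is constant equal to $z_r$ beyond. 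By Lemma~\ref{lemma: zvar1} applied to the corresponding time-monotone input, the variation through the fan equals $d(z^*, z_r)$, giving $Var(z(\cdot, t)) = d(z_l, z^*) + d(z^*, z_r)$.

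The remaining task is the triangle identity $d(z_l, z^*) + d(z^*, z_r) = d(z_l, z_r)$, which by the integral definition of $d$ is equivalent to the pointwise identity $|z_l(\p) - z_r(\p)| = |z_l(\p) - z^*(\p)| + |z^*(\p) - z_r(\p)|$ for a.e.\ $\p = (\p_1, \p_2) \in \T$. Since all three values lie in $\{-1, +1\}$, the identity is automatic whenever $z_l(\p) \neq z_r(\p)$. When $z_l(\p) = z_r(\p)$, the compatibility $(u_l, z_l(\p)), (u_r, z_r(\p)) \in \bar{\LL}_\p$ prevents the Relay at $\p$ from switching under the monotone rise $u_r \to u_l$: if the common value is $+1$ then $u_r \geq \p_1$, so the increasing input stays above $\p_1$; if it is $-1$ then $u_l \leq \p_2$, so it stays below $\p_2$. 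In either subcase $z^*(\p) = z_l(\p) = z_r(\p)$ and the identity collapses to $0 = 0$. The only non-routine step of the whole proof is precisely this triangle identity, whose validity depends on the hysteresis compatibility conditions ruling out the pathological configurations where $z^*$ could land on the wrong leaf of the Relay with respect to a coinciding pair $z_l(\p) = z_r(\p)$.
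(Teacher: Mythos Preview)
Your proof is correct and follows the same overall architecture as the paper: monotonicity of $u(\cdot,t)$ gives the first equality, Lemma~\ref{lemma: zvar1} reduces $Var(z(\cdot,t))$ to $d(z_l,z^*)+d(z^*,z_r)$, and what remains is the triangle identity $d(z_l,z^*)+d(z^*,z_r)=d(z_l,z_r)$.

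The only substantive difference is in how that identity is proved. The paper argues geometrically on the Preisach plane: it identifies the strip $R_1=[-a,u_l]\times[u_r,u_l]\cap\T$ as the only region where $z_r$ and $z^*$ can differ (and observes that $z^*=z_l=+1$ there since $R_1$ lies below level $u_l$), and the strip $R_2=[-a,u_l]\times[u_l,a]\cap\T$ as the region where $z_l$ and $z^*$ can differ (where $z^*=z_r$), then splits the integral for $d(z_l,z_r)$ over $R_1\cup R_2$. You instead argue pointwise at the level of a single Relay: for $\p$ with $z_l(\p)\neq z_r(\p)$ the $\{-1,+1\}$-valued identity is automatic, and for $\p$ with $z_l(\p)=z_r(\p)$ the compatibility constraints on $(u_l,z_l(\p))$ and $(u_r,z_r(\p))$ prevent the monotone rise $u_r\to u_l$ from ever crossing the relevant threshold, so $z^*(\p)$ coincides with the common value. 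Your route is a bit more elementary and avoids the strip bookkeeping; the paper's version makes the Preisach-plane geometry (and Figure~\ref{fig: lemmavar2}) more visible. Both are short and equivalent in strength.
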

\begin{proof}
    First equality is trivial since for every fixed $t$, $u(\cdot,t)$ takes value from $u_l$ to $u_r$ in a monotone way, so $ Var(u(\cdot,t))=|u_l-u_r|$. \par Let us denote by $z^*$ the configuration in the half-plane $x>0$ after all the rarefaction waves. We can see $z^*$ as the result of the hysteresis process with an input passing from $u_r$ to $u_l$ in a monotone way, starting from an initial configuration equal to $z_r$. Then, by exploiting the previous Lemma \ref{lemma: zvar1}, is easy to see that \[Var(z(\cdot,t))=d(z_l,z^*)+d(z^*,z_r).\] Now we claim that \[d(z_l,z^*)+d(z^*,z_r)=d(z_l,z_r)=Var(z_0).\] Indeed, $z_r$ and $z^*$ may differ only in the strip $R_1=[-a,u_l]\times[u_r,u_l]\cap \T$, where certainly $z^*$ and $z_l$ are both equal to $+1$ since $R_1$ is below the input data $u_l$, see Figure \ref{fig: lemmavar2}. They may only differ on the strip $R_2=[-a,u_l]\times[u_l,a]$ on which however $z^*$ and $z_r$ agree. Instead $z_l$ and $z_r$ differ on $R_1\cup R_2$. See again Figure \ref{fig: lemmavar2}. Hence \[\begin{split} d(z_l,z_r)&=\int_{R_1} |z_l-z_r| ~d\p + \int_{R_2} |z_l-z_r| ~d\p \\&= \int_{R_1} |z^*-z_r| ~d\p + \int_{R_2} |z_l-z^*| ~d\p=d(z^*,z_l)+d(z_r,z^*).\end{split}\] 
\end{proof}
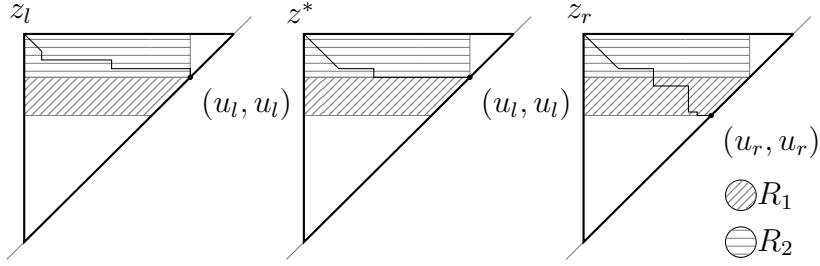
\begin{figure}
        \centering
        \begin{tikzpicture}[scale=0.23]

        \node[above] at (25.8,6) {$z_r$};
        \node[above] at (9.8,6) {$z^*$};
        \node[above] at (-6.2,6) {$z_l$};

        \draw[-,gray,xshift=32cm] (1.3,1.3)--(-6,1.3)--(-6,3.5)-- (3.5,3.5)--cycle;
        \fill[pattern = north east lines, pattern color = gray, line width = 0.5mm,xshift=32cm] (1.3,1.3)--(-6,1.3)--(-6,3.5)-- (3.5,3.5)--cycle;
        \draw[-,gray,xshift=32cm] (-6,3.5)-- (3.5,3.5)--(3.5,6)--(-6,6)--cycle;
        \fill[pattern = horizontal lines, pattern color = gray, line width = 0.5mm,xshift=32cm] (-6,3.5)-- (3.5,3.5)--(3.5,6)--(-6,6)--cycle;

        \draw[-,gray,xshift=32cm] (7,7)--(-7,-7);
        \draw[-,thick,xshift=32cm] (6,6)--(-6,-6)--(-6,6)--(6,6);
        \draw[-,xshift=32cm] (-6,6)--(-4,4)--(-2,4)--(-2,3)--(0,3)--(0,1.5)--(0.5,1.5)--(0.5,1.3)--(1.3,1.3) node[below right]{$(u_r,u_r)$};
        \filldraw[black,xshift=32cm] (1.3,1.3) circle (3.5pt);

        \draw[-,gray,xshift=16cm] (1.3,1.3)--(-6,1.3)--(-6,3.5)-- (3.5,3.5)--cycle;
        \fill[pattern = north east lines, pattern color = gray, line width = 0.5mm,xshift=16cm] (1.3,1.3)--(-6,1.3)--(-6,3.5)-- (3.5,3.5)--cycle;
        \draw[-,gray,xshift=16cm] (-6,3.5)-- (3.5,3.5)--(3.5,6)--(-6,6)--cycle;
        \fill[pattern = horizontal lines, pattern color = gray, line width = 0.5mm,xshift=16cm] (-6,3.5)-- (3.5,3.5)--(3.5,6)--(-6,6)--cycle;
        \draw[-,gray,xshift=16cm] (7,7)--(-7,-7);
        \draw[-,thick,xshift=16cm] (6,6)--(-6,-6)--(-6,6)--(6,6);
        
        \draw[-,xshift=16cm] (-6,6)--(-4,4)--(-2,4)--(-2,3.5)--(3.5,3.5) node[below right]{$(u_l,u_l)$};
        \filldraw[black,xshift=16cm] (3.5,3.5) circle (3.5pt);

       \draw[-,gray] (1.3,1.3)--(-6,1.3)--(-6,3.5)-- (3.5,3.5)--cycle;
        \fill[pattern = north east lines, pattern color = gray, line width = 0.5mm] (1.3,1.3)--(-6,1.3)--(-6,3.5)-- (3.5,3.5)--cycle;
        \draw[-,gray] (-6,3.5)-- (3.5,3.5)--(3.5,6)--(-6,6)--cycle;
        \fill[pattern = horizontal lines, pattern color = gray, line width = 0.5mm] (-6,3.5)-- (3.5,3.5)--(3.5,6)--(-6,6)--cycle;
        \draw[-,gray] (7,7)--(-7,-7);
        \draw[-,thick] (6,6)--(-6,-6)--(-6,6)--(6,6);
        
        \draw[-] (-6,6)--(-5,5)--(-5,4.5)--(-1,4.5)--(-1,4)--(3.5,4)--(3.5,3.5) node[below right]{$(u_l,u_l)$};
        \filldraw[black] (3.5,3.5) circle (3.5pt);

        \draw[black, xshift = 32cm] (3,-3.3) circle (25pt) node[right]{$\, R_1$};
        \fill[pattern=north east lines, pattern color=gray, line width=0.5mm, xshift = 32cm] (3,-3.3) circle (25pt);

        \draw[black, xshift = 32cm] (3,-6) circle (25pt) node[right]{$\,R_2$};
        \fill[pattern=horizontal lines, pattern color=gray, line width=0.5mm, xshift = 32cm] (3,-6) circle (25pt);

        \end{tikzpicture}
        \caption{The configuration $z_l,z^*,z_r$ respectively on the left, in the middle and on the right; we highlight the region $R_1$ on which $z_r$ and $z^*$ differ and the region $R_2$ on which $z^*$ and $z_l$ differ.}
        \label{fig: lemmavar2}
\end{figure}


\section{The general initial data Cauchy problem}\label{S5}

Exploiting the wave-front tracking method (\cite{AB3}, \cite{HH}), we construct a solution for the Cauchy problem \eqref{eq: hpde}. \par We start with initial data $u_0$ and $z_0$ piecewise constant, with a finite number of discontinuities. Taking inspiration from \cite[Chapter~6]{AB3}, we consider $u_0$ with values in the set $A_n:=\{ k\,2^{-n} \,|\, k\in \Z\}$, hence we denote the initial data as $u_0^{(n)}$. We know from previous Section \ref{S4} that if we solve the Riemann problem with $u_l,u_r\in A_n$ and general value $z_l, z_r$ we end up with a solution $u$ consisting of rarefaction waves connecting constant data $u_l, M_{k-1},\dots, u_r$, see \eqref{eq: uRP}. To ensure that also $M_i\in A_n$ we need to consider only initial configuration $z_0$ taking values in the set $\tilde{\mathcal S}_L^{(n)}$. We recall that such set, defined at the end of Section \ref{S2}, is the set of the configuration $z\in \tilde{\mathcal S}_L$ such that the corresponding graphs $\xi_z$ consist of a finite number of segments with vertices belonging to the set $Q_n= (A_n \times A_n) \cap \T.$ Considering $u_0(x)\in A_n$ and $z_{0}\in \tilde{\mathcal S}_L^{(n)}$ ensures that, after solving the Riemann problem, we still have $u(x,t)\in A_n$ and $z(x,t)\in \tilde{\mathcal S}_L^{(n)}$ for almost every $(x,t).$\par
Consider then the Cauchy problem with data $u^{(n)}_0: \R \to A_n, z^{(n)}_0: \R \to \tilde{\mathcal S}_L^{(n)}$ piecewise constant with a finite number of discontinuities and compatible, i.e. $(u_0^{(n)} (x), z_0^{(n)} (x ; \p))\in \LL_\p$ for almost every $x$ and $\p\in\T$. At time $t=0$ we solve a finite number of Riemann problems so we get a solution $u_n$, defined for small times, which is a finite collection of rarefaction waves. Since $z_0^{(n)}(x) \in \tilde{\mathcal S}_L^{(n)}$ then we have that the rarefactions connect data from the set $A_n$. Hence, at each rarefaction wave, we have left datum $j2^{-n}$ and right datum $(j+m)2^{-n}$ for some $j,m \in \Z$, so we can split the rarefaction in $|m|$ (non-entropical) shocks connecting $(j+i)2^{-n}$ and $(j+i+1)2^{-n}$ (we refer to \cite{garpic} for this kind of approach). In this way we also split the rarefaction waves of $z_n$ and $w_n$. The shock speed is then given by the generalized \RH conditions \eqref{eq: hrhcond}. As a result, these shocks are  discontinuities for both $u_n,z_n$ and hence $w_n$, whereas, in reference to Section \ref{S4}, the vertical ones are only for $z_n$ and $w_n$.

Then by the wave-front tracking procedure, we extend $u_n$ for larger times, and hence also $z_n$ and $w_n$. \par
We now analyze the possible wave interactions. Since at time $0$ we split the rarefaction waves by shocks of strength $2^{-n}$, then if two discontinuities of $u_n$ meet at some point $(x_1,t_1)$, with $t_1>0$ we have two possibilities: \begin{enumerate}
    \item one wave connects $j 2^{-n}$ and $(j+1) 2^{-n}$ and the other $(j+1) 2^{-n}$ and $j 2^{-n}$. So the new left and right data for $u_n$ are equal at $(x_1,t_1)$, hence we don't have discontinuities in $u_n$ after the interaction. We may only generate a new vertical discontinuity for $w_n$.
    \item one wave connects $j 2^{-n}$ and $(j+1) 2^{-n}$ and the other $(j+1) 2^{-n}$ and $(j+2) 2^{-n}$. However, such waves would not cross each other as the slope of the first one is greater due to the generalized \RH conditions \eqref{eq: hrhcond}. 
\end{enumerate}
We also notice that if we have more than two waves interacting, then, reasoning as in point $2$, the values separated by these waves must oscillate between $j 2^{-n}$ and $(j+1)2^{-n}$. Hence, after the interaction at most one shock is generated. In conclusion, since we start with a finite number of discontinuities for $u_n$ (each one of the finite quantity of rarefactions is split in a finite number of shocks), the number of discontinuities for $u_n$ stays finite, without increasing, implying a finite number of interactions. Finally, a wave of $u_n$ may interact with a vertical wave of $z_n$ at most once, and in that case no further waves of $u_n$ are generated, so we can conclude that also the number of this type of interactions is finite.\par 
We just proved the following statement. \begin{lem}\label{lemma: ndisc}
    The total number of discontinuities of $u_n$ is non-increasing and the number of interactions between discontinuity wave of $u_n$ and $w_n$ is finite. 
\end{lem}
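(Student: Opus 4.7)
The plan is to organize the discussion preceding the statement into a clean two-stage argument: first a local analysis of what happens at a single interaction, then a global finiteness bookkeeping.

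First I would establish the structural invariant that every $u_n$-shock present at any positive time connects left and right states in $A_n$ differing by exactly $2^{-n}$. At $t=0$ this holds by construction, since each rarefaction fan of the Riemann solutions from Section \ref{S4} is split into unit shocks. I would then verify that the invariant is preserved under each of the two interaction configurations listed before the statement, so that it propagates for all time and restricts the combinatorics that can occur at later interactions.

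Next, for a generic interaction at $(x_1, t_1)$ with $t_1 > 0$ involving $k\geq 2$ incoming $u_n$-shocks, I would formalize the case distinction already sketched. The crucial observation is that two consecutive incoming $u_n$-shocks with the same orientation, say of the form $(j\,2^{-n}, (j+1)\,2^{-n})$ and $((j+1)\,2^{-n}, (j+2)\,2^{-n})$, cannot collide: by the generalized Rankine--Hugoniot formula \eqref{eq: hrhcond} combined with the piecewise structure of $f = 1/g$ derived in Section \ref{S4}, the first one travels strictly faster than the second, and symmetrically on the descending side. Hence every actual collision corresponds to an oscillating walk on $A_n$ between two neighboring levels $j\,2^{-n}$ and $(j+1)\,2^{-n}$, and a direct count gives at most one surviving outgoing $u_n$-shock (none if the extreme left and right states coincide, exactly one otherwise). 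This proves the non-increase of the number of $u_n$-discontinuities at each interaction.

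From here the finiteness claims follow by bookkeeping. Since $u_0^{(n)}$ has finitely many jumps and each initial Riemann problem produces finitely many unit shocks, the total number of $u_n$-discontinuities is finite at $t=0$ and never grows, so there can be only finitely many interactions involving two $u_n$-waves. For interactions between a $u_n$-shock and a purely vertical $z_n$-front I would argue via the Riemann analysis of Section \ref{S4} that the outcome is at most one outgoing $u_n$-shock and that no new vertical $w_n$-front is produced which could later meet the same $u_n$-shock again; so each pair consisting of a $u_n$-shock and a vertical $z_n$-front meets at most once, and combined with the global finiteness of both families this bounds the total number of $u_n$-$w_n$ interactions. The main obstacle I anticipate is the speed-monotonicity assertion underlying the non-crossing of consecutive monotone unit shocks: it is treated as immediate from \eqref{eq: hrhcond}, but in fact requires unpacking the piecewise-continuous form of $f$ carefully and keeping track of how prior collisions with vertical $z_n$-fronts have modified the $w$-values flanking a given $u_n$-shock. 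Once that monotonicity is established rigorously, the rest is a discrete counting argument.
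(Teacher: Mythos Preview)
Your proposal is correct and follows essentially the same approach as the paper: the paper's proof is precisely the discussion immediately preceding the lemma (the two-case analysis of colliding unit shocks, the oscillation argument for multiple incoming fronts, and the observation that a $u_n$-wave meets each vertical $z_n$-front at most once), and you have simply reorganized that discussion into a cleaner structural-invariant-then-bookkeeping format. The one point you flag as a potential obstacle---the speed-monotonicity of consecutive monotone unit shocks via \eqref{eq: hrhcond}---is exactly what the paper also treats as ``immediate,'' so your proposal neither adds nor omits anything substantive relative to the original.
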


So we can construct $u_n,z_n$ and also $w_n$ for each time $t>0$ having the following quite classical properties.

\begin{lem}\label{lemma: ph1} 
    Let $T>0$, $u_0^{(n)},z_0^{(n)}$ piecewise constant with a finite number of pieces, such that $(u_0^{(n)}(x),z_0^{(n)}(x;\p))\in\LL_\p$ for almost every $x$ and $\p\in \T$, $Var(u_0^{(n)})=C$ and $z_0^{(n)}(x)\in \tilde{\mathcal S}_L^{(n)}$ for some $L>0.$ Consider also $u_n: \R \times [0,T)\to \R $,   $z_n: \R \times [0,T)\to \tilde{\mathcal S}_{2\max(L,C/T)}^{(n)} $ and $w_n= \int_\T z_n \, d\p$ constructed via the wave-front tracking method starting with initial data $u_0^{(n)},z_0^{(n)}$. Then: \begin{enumerate}[i)]
    \item The couple $(u_n,w_n)$ satisfies the weak formulation of the PDE \eqref{eq: hweaksol} for any $T>0$, moreover $z_n(\p) =[h^\p(u_n,z_0^{(n)})]$ and $w_n=[\HM(u_n,z_0^{(n)})]$ hold, for almost every $x,t$ and $\p$.
    \item For fixed $t\in [0,+\infty)$ \begin{equation}\label{eq: lemmaph0}
        Var(u_n(\cdot,t))\leq Var (u_0^{(n)}(\cdot)), \quad Var(z_n(\cdot,t))\leq Var(z_0^{(n)}(\cdot)).
    \end{equation} 
    \item For $t,t'\in [0,+\infty)$ the following estimates hold \begin{equation}\label{eq: lemmaph1}
        \int_{-\infty}^{+\infty} |u_n(x,t)-u_n(x,t')|~dx \leq Var(u_0^{(n)}(\cdot))|t-t'|
    \end{equation} and \begin{equation}\label{eq: lemmaph2}
        \int_{-\infty}^{+\infty} d(z_n(x,t),z_n(x,t'))z~dx \leq Var(z^{(n)}_0(\cdot))|t-t'|.
    \end{equation}
\end{enumerate}
\end{lem}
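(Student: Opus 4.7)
The plan is to verify the three items in turn, combining the Riemann-problem analysis of Section \ref{S4} with an induction on the interaction points produced by the wave-front tracking procedure. Because the construction of $u_n, z_n, w_n$ is built strip-by-strip between consecutive interaction times $0 = t_0 < t_1 < \dots < t_K$ (finite by Lemma \ref{lemma: ndisc}), each item reduces to a local statement on one such strip plus a matching argument across interactions.

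For item (i), on every strip $[t_k, t_{k+1})$ the function $u_n$ is piecewise constant with jumps that are either propagating shocks satisfying the Rankine-Hugoniot condition \eqref{eq: hrhcond} by construction, or stationary $z$-vertical waves for which $\Delta u = 0$ forces $s = 0$ and hence \eqref{eq: hrhcond} trivially holds. The standard integration-by-parts argument then gives the weak formulation \eqref{eq: hweaksol} on the strip, and the contributions at each interaction time cancel because interactions occupy isolated points of the $(x,t)$-plane. The Preisach identity $z_n(x,\cdot;\rho) = [h^\rho(u_n(x,\cdot), z_0^{(n)}(x;\rho))]$ is propagated by induction on $k$: on the first strip it follows from the resolution of the initial Riemann problems (Section \ref{S4}); across an interaction it is preserved by the semigroup property \eqref{eq: semigroup}, since at $t = t_k$ the left and right traces of $z_n$ are precisely the Relay outputs reached by $u_n(x,\cdot)$ up to time $t_k$, and the new local Riemann problem restarts with these as initial data. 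Integrating over $\T$ yields $w_n = [\HM(u_n, z_0^{(n)})]$.

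For item (ii), Proposition \ref{prop: vartot} applied to each local Riemann problem at $t = 0+$ gives $Var(u_n(\cdot, 0+)) = Var(u_0^{(n)})$ and $Var(z_n(\cdot, 0+)) = Var(z_0^{(n)})$. Between interactions, the discontinuity structure of $u_n$ and $z_n$ is rigid and the variations are constant. At each interaction we invoke the case analysis preceding Lemma \ref{lemma: ndisc}: either two opposite $u$-shocks cancel (strictly decreasing $Var(u_n)$ while at most decreasing $Var(z_n)$), or a $u$-shock meets a vertical $z$-wave, in which case the post-interaction Riemann problem is again solved by Section \ref{S4} and a further application of Proposition \ref{prop: vartot} to the resulting states shows neither variation increases. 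Chaining these facts over the finitely many $t_k$ proves \eqref{eq: lemmaph0}.

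For item (iii), the key observation is a uniform speed bound: along every propagating shock $\Delta u$ and $\Delta w$ share sign, because the Preisach operator is monotone along monotone inputs, so \eqref{eq: hrhcond} gives $s = \Delta u/(\Delta u + \Delta w) \in [0,1]$, while vertical $z$-waves have speed $0$. Between two consecutive interaction times $\tau_k < \tau_{k+1}$ the standard area-swept-by-fronts estimate then gives
\begin{equation*}
\int_{-\infty}^{+\infty} |u_n(x,\tau_{k+1}) - u_n(x,\tau_k)|\, dx \leq \sum_i |s_i||\Delta u_i|(\tau_{k+1} - \tau_k) \leq Var(u_n(\cdot,\tau_k))\,(\tau_{k+1} - \tau_k),
\end{equation*}
and an analogous bound with $d(z_n^-, z_n^+)$ replacing $|\Delta u_i|$ (and summing only over shocks and vertical waves, the latter contributing zero by $s_i = 0$) yields the corresponding estimate for $z_n$. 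Partitioning an arbitrary interval $[t,t']$ along the finitely many interaction times, using the triangle inequality in $L^1$ (respectively in the metric $d$), and bounding each $Var(u_n(\cdot,\tau_k))$ by $Var(u_0^{(n)})$ and each $Var(z_n(\cdot,\tau_k))$ by $Var(z_0^{(n)})$ via item (ii), delivers \eqref{eq: lemmaph1} and \eqref{eq: lemmaph2}. The main delicacy is the simultaneous tracking of $u_n$-shocks and vertical $z$-waves through interactions, for which the speed bound $s \in [0,1]$ and the case analysis of Lemma \ref{lemma: ndisc} are the decisive inputs; once these are in place, the remaining estimates are of classical wave-front tracking type, adapted to the metric space of Preisach configurations.
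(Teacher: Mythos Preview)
Your proposal is correct and follows essentially the same route as the paper's own proof, which is extremely terse: the paper dispatches (i) with ``true by construction,'' (ii) with ``direct consequence of Proposition~\ref{prop: vartot},'' and explicitly omits (iii) as standard. Your write-up is simply a careful unpacking of those sentences---the strip-by-strip induction across the finitely many interaction times, the semigroup property for propagating the Relay/Preisach relation, the speed bound $s\in[0,1]$ from \eqref{eq: hrhcond} and the area-swept-by-fronts estimate---all of which are exactly the classical ingredients the paper alludes to when it refers the reader to \cite{AB3}, \cite{HH}, \cite{BFMS}.
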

\begin{proof}
    First we notice that $z(x,t)\in \tilde{\mathcal S}_{2\max(L,C/T)}^{(n)}$ is a consequence of Proposition \ref{prop: Ltilda}.
    $i)$ is true by construction, indeed any piecewise constant function satisfying the \RH condition is a weak solution. Also the hysteresis relationship holds by construction. $ii)$ is a direct consequence of Proposition \ref{prop: vartot}. In $iii)$ estimates \eqref{eq: lemmaph1} and \eqref{eq: lemmaph2} are quite standard, we omit the proof of them. (One can refer to \cite{AB3} or \cite{HH} for the proof in classical setting or to the paper \cite{BFMS} for a more detailed proof in the case with hysteresis).
\end{proof}

We now deal with the general initial data Cauchy problem \eqref{eq: hpde}. In the sequel, by $L^1(\mathbb{R};\tilde{\cal S}_L)$ (see Definition \ref{def: zL1BV}) we mean $\xi_v$ as ``zero" point in $\tilde{\cal S}_L$.

\begin{teo}\label{teo: esistenza}
    Let us fix $T>0$, and consider $u_0 \in BV(\R)\cap L^1(\R)$ and $z_0\in BV(\R;\tilde{\mathcal S}_L)\cap L^1(\R; \tilde{\mathcal S}_L)$, with $\xi_z(x)$ being a union of a finite number of segments for every $x$ and with $(u_0(x),z_0(x;\p))\in \LL_p$ for almost every $x$ and $\p \in \T$. Then there exists a couple $u,w\in C^0([0,T]; L^1_{loc}(\R))$ weak solution to the Cauchy problem \eqref{eq: hpde} (see Definition \ref{def: weak}). 
\end{teo}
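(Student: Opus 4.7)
The plan is a three-step wave-front tracking argument: discretize the initial data, invoke Lemma \ref{lemma: ph1} to obtain approximate solutions with uniform bounds, and pass to the limit both in the weak formulation of the PDE and in the measure-based characterization of the Preisach relationship.

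\emph{Step 1 (Approximation of the initial data).} Since $u_0\in BV(\R)\cap L^1(\R)$, I would approximate $u_0$ by a sequence $u_0^{(n)}:\R\to A_n$ of piecewise constant functions with finitely many jumps, so that $u_0^{(n)}\to u_0$ in $L^1(\R)$ and $Var(u_0^{(n)})\leq Var(u_0)$. For the initial configuration, using Lemma \ref{lemma: propPreis4} I can approximate $z_0(x)$, for each $x$, by some $z_0^{(n)}(x)\in\tilde{\mathcal S}_L^{(n)}$ with $d(z_0^{(n)}(x),z_0(x))\leq c_1 2^{-n}$, making the approximation simultaneous with the jumps of $u_0^{(n)}$ so that compatibility $(u_0^{(n)}(x),z_0^{(n)}(x;\p))\in\LL_\p$ is preserved and the total variation $Var(z_0^{(n)})$ remains bounded by (a constant times) $Var(z_0)$.

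\emph{Step 2 (Approximate solutions and compactness).} Lemma \ref{lemma: ph1} yields, for each $n$, functions $u_n$, $z_n$, $w_n=\int_\T z_n\,d\p$ satisfying the weak PDE, the pointwise hysteresis relation $z_n(\cdot;\p)=[h^\p(u_n,z_0^{(n)})]$, uniform BV bounds in space, and Lipschitz-in-time estimates with values in $L^1$ (resp.\ $L^1(\R;\tilde{\mathcal S})$). By the classical Helly-type compactness, up to a subsequence $u_n\to u$ in $L^1_{loc}(\R\times[0,T])$ and pointwise a.e., with $u\in C^0([0,T];L^1_{loc}(\R))\cap BV$. By the metric-valued version of Helly's theorem stated in Appendix \ref{A1}, combined with the compactness of $\tilde{\mathcal S}_{L'}$ (Remark after Definition \ref{def:S}, with $L'=2\max(L,C/T)$), I extract a further subsequence so that $z_n(x,t)\to z(x,t)$ in $(\tilde{\mathcal S},d)$ for a.e.\ $(x,t)$, and consequently $w_n\to w:=\int_\T z\,d\p$ in $L^1_{loc}$.

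\emph{Step 3 (Passage to the limit).} The weak formulation \eqref{eq: hweaksol} is linear in $(u_n+w_n,u_n)$, so it passes to the limit directly from the $L^1_{loc}$ convergence. The inequalities in \eqref{eq: hdis} and the constraint $|z(x,t;\p)|=1$ pass to the a.e.\ limit from Lemma \ref{lemma: ph1}$(i)$. What remains — and this is the genuinely hard step — is the hysteresis relationship, which I would not hope to recover pointwise in time from the pre-limit (since taking a pointwise limit of Relay outputs for BV inputs is delicate). Instead I would use the measure-based formulation of Propositions \ref{prop: Relayweak} and \ref{prop: preisweak}: each $z_n(x,\cdot;\p)$ satisfies the integral inequality \eqref{eq: weakhis} against $\tilde u_n$. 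Integrating against a test $\T'\subset\T$, multiplying by $u_n$ the PDE and using $u_n^2/2$ identities, I would combine the terms to obtain the global energy-type inequality \eqref{eq: genweakhis} at the approximate level. The weak-$\ast$ lower semicontinuity of $z\mapsto\Psi_\p(z;(0,t))$ (essentially a total-variation functional against the constant weights $\p_1,\p_2$), together with the strong $L^2$ convergence of $u_n$ to $u$ coming from the uniform $BV$ and $L^\infty$ bounds, allows me to pass to the limit and obtain \eqref{eq: genweakhis} for the limit $(u,z)$. The inequalities \eqref{eq: hdis} at the limit, combined with \eqref{eq: genweakhis}, are the required weak hysteresis condition of Definition \ref{def: weak}.

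The main obstacle, as anticipated in the Introduction (item ii), is the passage to the limit in the nonlinear, history-dependent Preisach relation when the inputs are only $BV$; the measure characterization of Proposition \ref{prop: preisweak} and the compactness of $(\tilde{\mathcal S}_{L'},d)$ are precisely what make this step tractable. Once \eqref{eq: hdis} and \eqref{eq: genweakhis} are established for the limit $(u,z)$, continuity in time with values in $L^1_{loc}$ follows from the uniform Lipschitz estimate \eqref{eq: lemmaph1}--\eqref{eq: lemmaph2}, the initial conditions are attained by the $L^1$-convergence at $t=0$, and the proof is complete.
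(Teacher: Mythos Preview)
Your proposal is correct and follows essentially the same route as the paper's proof: discretize the data via Lemma~\ref{lemma: propPreis4} and the BV approximation lemmas, invoke Lemma~\ref{lemma: ph1} and the metric-space Helly theorem on the compact set $\tilde{\mathcal S}_{L'}$ for compactness, pass to the limit in \eqref{eq: hweaksol} and \eqref{eq: hdis} by $L^1$/a.e.\ convergence, and obtain \eqref{eq: genweakhis} by combining the integrated Preisach inequality with the PDE and the lower semicontinuity of the total-variation part of $\Psi_\p$. The paper carries out exactly this program, making explicit two points you only sketch: the ``$u_n^2/2$ identity'' is in fact the one-sided estimate $\int_{(0,t)}\tilde u_n\,d(\partial_t u_n)\ge\tfrac12(u_n(t)^2-u_0^{(n)2})$ valid for piecewise constant functions (not an equality), and the lower semicontinuity of $\Psi_\p$ is obtained by splitting it as $\tfrac{\p_2-\p_1}{2}|\partial_t z_n|+\tfrac{\p_2+\p_1}{2}\partial_t z_n$, where the first term has nonnegative weight on $\T$ and is handled by lower semicontinuity of the mass while the second passes to the limit directly.
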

\begin{proof}
    Combining the general theory of bounded variation functions (see \cite[Chapter~2]{AB3}), its adaptation to function with values in a metric space (see Lemmas \ref{lemma: BV1} and \ref{lemma: BV2}) and the approximation Lemma \ref{lemma: propPreis4} we can find the sequences $u_0^{(n)}$ and $z_0^{(n)} : \R \to \tilde{\mathcal S}_L$ of right-continuous, piecewise constant functions such that:   
     \begin{itemize}
        \item For almost every $x$, $u_0^{(n)}(x) \in A_n=\{k\,2^{-n}\,|\, k\in \mathbb Z\}$, $z_0^{(n)}\in \tilde{\mathcal S}_L^{(n)}\subset \tilde{\mathcal S}_L$ and they are compatible with each other; 
        \item $Var(u_0^{(n)})\leq Var(u_0):=C$ and $Var(z_0^{(n)})\leq Var(z_0)$;
        \item $ \lim_{n\to \infty} || u_0-u_n^{(n)}||_\infty =0$ and $\lim_{n\to \infty} ||d(z_0,z_0^{(n)})||_\infty = 0 $; 
    \end{itemize}
    Moreover, proceeding as proved in \cite[Chapter~3]{AB3}, since $u_0\in L^1(\R)$ and $z_0\in L^1(\R;\tilde{\mathcal S}_L)$, we can suppose also $u_0^{(n)}, z_0^{(n)}$ to converge respectively in $L^1(\R)$ to $u_0$ and in $L^1(\R;\tilde{\mathcal S}_L)$ to $z_0$.
    \par We can use $u_0^{(n)}$ and $z_0^{(n)}$ as initial data for our PDE and, by the wave-front tracking algorithm described above, construct the couple $u_n,z_n$ such that by setting \[w_n(x,t):=\int_{\T} z_n(x,t;\p) ~d\p\] then $(u_n,w_n)$ solves the PDE weakly. Notice that by Lemma \ref{lemma: propPreis1}, $z(x,t)\in \tilde{\mathcal S}_{L'}$ with $L'=2 \max\{L,C/T\}$ which is a compact subset of $\tilde{\mathcal S}$. Due to the regularity of the initial data $u_0$ and $z_0$, we can apply Theorem \ref{teo: BV3}, both in $\tilde S_{L'}$ and in $\mathbb{R}$, and Lemma \ref{lemma: ph1}, to get, by standard wave-front tracking limit procedure, the existence of two limits $u\in C^0([0,T];L^1(\R))$ and $z \in C^0([0,T)]; L^1(\R;\tilde{\mathcal S}_{L'}))$ such that, at least for subsequences, $u_n \to u$ in $L^1(\R\times (0,T))$, and $z_n\to z$ in $L^1(\R\times (0,T); \tilde{\mathcal S}_{L'})$. 
    By setting $w_n=\int_\T z_n \, d\p$ and $w=\int_\T z\, d\p$ we have \begin{equation}\begin{split}
        \int_0^T \int_\R |w-w_n| ~dxdt&= \int_0^T \int_\R \bigg|\int_\T z-z_n ~d\p \bigg| ~dxdt\\
        &\leq \int_0^T \int_\R 
        \int_\T |z-z_n|~d\p dxdt = \int_0^T \int_\R d(z,z_n)~dxdt
        \end{split}
    \end{equation} that is $w_n\to w$ in $L^1(\R \times (0,T))$. Moreover \begin{equation}
        \begin{split}
            \int_\R |w(x,t)-w(x,s)| ~ds &= \int_\R \bigg| \int_\T z(x,t;\p)-z(x,s;\p) ~d\p\bigg| ~ds\\ &\leq  \int_\R \int_\T |z(x,t;\p)-z(x,s;\p)| d\p~dx  \\ &= \int_\R d(z(x,t),z(x,s))~dx \leq C |t-s|,
        \end{split}
    \end{equation} that is $w\in C^0([0,t];L^1(\R))$.
    It is clear that $(u,w)$ satisfies the weak formulation \eqref{eq: hweaksol}. Moreover since \begin{equation}
        \int_0^T\int_{\mathbb{R}} \int_\T |z-z_n|~d\p dx dt\to 0
    \end{equation} then we have almost everywhere convergence for $(x,t,\p)\in[0,T]\times\mathbb{R}\times\T$ (up to a subsequence), so almost everywhere $|z|=1$ and \eqref{eq: hdis} holds.\par
    We only need to prove \eqref{eq: genweakhis}. Since for fixed $n$, $w_n = [\HM(u_n,z_0^{(n)})]$ for almost every $x$, we can also fix $t$ such that \eqref{eq: preisweak1} holds for the couple $(u_n(x,\cdot~),w_n(x,\cdot~))$ for almost every $x$. Integrating then this inequality on $\R$ we get \begin{equation}\label{eq: dis1}
        \int_{\T}\int_\R \int_{(0,t)} \tilde{u}_n d\left(\frac{\partial z_n}{\partial t} (x,\cdot~;\p)\right) ~dx d\p \geq \int_{\T}\int_\R \Psi_\p(z_n(x,\cdot~;\p);(0,t)) ~dxd\p ,
    \end{equation} where $\tilde{u}_n$ identifies the right continuous (w.r.t. to the time variable) representative of $u_n$ and $\left(\frac{\partial z_n}{\partial t} (x,\cdot~;\p)\right)$ the measure associated to the distribution $z'_n(x,\cdot~;\p)$, seen as a function of time with $x$ and $\p$ fixed. Now from a distributional point of view, for every test function $\phi$ with compact support in $\R\times (0,t)$ we have that (the last equality is the standard definition of a measure as integral of parametrized measures)
    \begin{equation}\label{eq: integraldistr1}
        \begin{split}
            \left\langle \frac{\partial w_n}{\partial t},\phi \right\rangle & = -\int_\R \int_{(0,t)} \frac{\partial \phi}{ \partial t} w_n ~dtdx \\ & =  -\int_\R \int_{(0,t)} \int_{\T} \frac{\partial \phi}{ \partial t} z_n ~d\p dtdx = -\int_{\T} \int_\R   \int_{(0,t)} \frac{\partial \phi}{ \partial t} z_n ~dtdxd\p \\ & = \int_{\T}  \int_\R \int_{(0,t)} \phi ~d\left(\frac{\partial z_n}{\partial t} (x, \cdot~;\p)\right) dx d\p = \left\langle \int_{\T} \frac{\partial z_n}{ \partial t} ~d\p, \phi \right\rangle,
        \end{split}
    \end{equation} but $w_n$ is also a $BV$ function from $\R \times (0,T)$ to $\R$ (since is a finite sum of constant pieces) so \begin{equation}\label{eq: integraldistr2}
        \left\langle \frac{\partial w_n}{\partial t},\phi \right\rangle = \int_\R \int_{(0,t)} \phi\, d\left(\frac{\partial w_n}{ \partial t} (x, \cdot) \right)~dx.
    \end{equation}
    Hence by \eqref{eq: integraldistr1}, \eqref{eq: integraldistr2} we get 
    $\partial w_n/\partial t=\int_{\T}(\partial z_n/\partial t)d\p$ as measures on $\R \times (0,t)$. Hence \eqref{eq: dis1} becomes \begin{equation}\label{eq: dis2}
       \int_\R \int_{(0,t)} \tilde{u}_n \,d\left(\frac{\partial w_n}{\partial t} (x,\cdot~)\right) ~dx  \geq \int_{\T}\int_\R \Psi_\p(z_n(x,\cdot~;\p);(0,t)) ~dxd\p.
    \end{equation} From the weak formulation \eqref{eq: hweaksol}, we get
    \begin{equation}\label{eq: ug2}
        \frac{\partial w_n}{\partial t}=-\frac{\partial u_n}{\partial t}-\frac{\partial u_n}{\partial x}
    \end{equation} in a distributional sense, which also holds in the sense of measures as $u_n$ is in $BV(\R \times (0,T))$. From \eqref{eq: dis2} and \eqref{eq: ug2} we get \begin{multline}\label{eq: dis3}
         -\int_\R \int_{(0,t)} \tilde{u}_n ~d\left(\frac{\partial u_n}{\partial t} (x,\cdot)\right) dx  -\int_{(0,t)}\int_\R  \tilde{u}_n ~d\left(\frac{\partial u}{\partial x}(\cdot,t)\right) dt\\ \geq \int_{\T} \int_\R \Psi_\p(z_n(x,\cdot~;d\p);(0,t)) ~d\p dx.
   \end{multline}
   Since for every fixed $x$, $\Tilde{u}_n(x,\cdot)_{|_{[0,t]}}=\sum_{i=1}^{N(x,t)} \Tilde{u}_n^{(i)}(x) \1 _{[t_{i-1},t_i)}$  then \begin{equation}\label{eq: misuraun}
   \int\limits_{(0,t)}\left(\Tilde{u}_n(x,t)\right)d \left(\frac{\partial u_n}{\partial t}(x,\cdot)\right) = \sum_{i=1}^{N(x,t)-1}\Tilde{u}_n^{(i+1)}(x) (\Tilde{u}_n^{(i+1)}(x)-\Tilde{u}_n^{(i)}(x)). \end{equation} Just neglecting the discontinuity points of $u_0^{(n)}$ and points $x$ such that $u_n$ is discontinuous in $(x,t)$, it is, for a.e. $x$, \[\Tilde{u}_n^{(1)}(x)=\lim_{s\to 0^+} u_n(x,s)=u_0^{(n)}(x), \quad \Tilde{u}_n^{(N(x,t))}(x)=\lim_{s\to t^-} u_n(x,s)=u_n(x,t).\] So by the standard inequality \[ \sum_{i=1}^{N-1} \Tilde{u}_n^{(i+1)}(x) (\Tilde{u}_n^{(i+1)}(x)-\Tilde{u}_n^{(i)}(x)) \geq \frac{1}{2} ((\Tilde{u}_n^{(N)})^2-(\Tilde{u}_n^{(1)})^2)\] we get from \eqref{eq: misuraun} \begin{equation}\label{eq: disu1}
       \int_\R \int_{(0,t)} \tilde{u}_n ~d\left(\frac{\partial u_n}{\partial t} (x,\cdot)\right) dx  \geq \frac{1}{2}\int_\R u_n(x,t)^2 -u_0^{(n)}(x)^2~dx.
   \end{equation}
   Similarly for fixed $t$, $\Tilde{u}_n(\cdot,t)=\sum_{i=1}^{N(t)} \Tilde{u}_n^{(i)}(t) \1 _{[x_{i-1},x_i]}$, which is left continuous, so by the same reasoning as above \begin{equation}\label{eq: disu2}
        \int_{(0,t)} \int_\R\tilde{u}_n ~d\left(\frac{\partial u_n}{\partial x} (\cdot,t)\right) dt  \geq \frac{1}{2}\int_{(0,t)} u_n(+\infty,t)^2 -u_n(-\infty,t)^2~ds=0,
   \end{equation} as $u_n(+\infty,t)=u_n(-\infty,t)=0$ since for almost $u_n(\cdot,t)\in L^1(\R)\cap BV(\R)$. Hence by \eqref{eq: dis3}, \eqref{eq: disu1} and \eqref{eq: disu2} we conclude that \begin{equation}\label{eq: dis4}
        \frac{1}{2}\int_\R \left(u_n(x,t)^2 -u_0^{(n)}(x)^2\right)~dx + \int_{\T} \int_\R \Psi_\p(z_n(x,\cdot~;d\p);(0,t)) ~d\p dx \leq 0,     
   \end{equation} which is \eqref{eq: genweakhis} for fixed $n$. In order to perform the limit as $n\to+\infty$, by the definition of $\Psi_\p$, we first write  \begin{equation}\begin{split}\label{eq: hweak2}
    \int_{\T} \int_\R& \Psi_\p(z_n(x,\cdot~;d\p);(0,t)) ~d\p dx= \\&= \int_{\T} \int_\R \left[\int_{(0,t)} \p_2 d \left(\frac{\partial z_n}{\partial t}^+ \right) - \int_{(0,t)} \p_1 d \left(\frac{\partial z_n}{\partial t}^-\right) \right]~ dx d\p \\ &=  \int_{\T} \int_\R \left[\int_{(0,t)} \frac{\p_2-\p_1}{2} d \left(\bigg|\frac{\partial z_n}{\partial t}\bigg| \right) + \int_{(0,t)} \frac{\p_2+\p_1}{2} d \left(\frac{\partial z_n}{\partial t}\right)\right] ~dx d\p \\ & =\int_{\T} \frac{\p_2-\p_1}{2}  \bigg|\frac{\partial z_n}{\partial t}\bigg|(\R \times (0,t);\p)~ d\p \\ & \quad+ \int_{\T} \int_{\R} \frac{\p_2+\p_1}{2} (z_n(x,t;\p)-z_0^{(n)}(x;\p) )~ dx d\p.
        \end{split}
   \end{equation} 
 The first term of the left-hand side of \eqref{eq: dis4} and the last term of the right-hand side of \eqref{eq: hweak2} respectively converge to 

   \[
\frac{1}{2}\int_\R \left(u(x,t)^2 -u_0(x)^2\right)\,dx,\ \  \int_{\T} \int_{\R} \frac{\p_2+\p_1}{2} (z(x,t;\p)-z_0(x;\p) )~ dx d\p
   \]
because of convergence in $L^1(\mathbb{R})$ and equiboundedness. Using \eqref{eq: dis4} and \eqref{eq: hweak2} and the above convergences, we can also infer \begin{equation}
       \limsup_{n\to \infty} \int_{\T} \frac{\p_2-\p_1}{2}  \bigg|\frac{\partial z_n}{\partial t}\bigg|(\R \times (0,t);\p)~ d\p  <\infty
   \end{equation} which is equivalent to saying that the mass of the sequence of measures $((\p_2-\p_1)/2) \partial z_n / \partial t:=((\p_2-\p_1)/2) \partial z_n / \partial t(\cdot;\rho)d\rho$ on the set $(\R \times (0,t) \times \T)$ is equibounded, as $(\p_2-\p_1)/2\geq 0$ on $\T$. So, up to a subsequence, there exists a measure weak star limit of $((\p_2-\p_1)/2)\partial z_n /\partial t$ which must then coincide with $((\p_2-\p_1)/ 2)\partial z / \partial t$  by the convergence of $z_n$ to $z$ in $L^1(\R\times(0,t)\times \T).$ Finally, by the lower-semicontinuity of the mass 
   
   \[
   \begin{split}
   \int_{\T} \frac{\p_2-\p_1}{2}  \bigg|\frac{\partial z}{\partial t}\bigg|(\R \times (0,t);\p)~ d\p &= \frac{\p_2-\p_1}{2}\bigg|\frac{\partial z}{\partial t} \bigg|(\R \times (0,t) \times \T)\\&\leq \liminf_{n\to \infty} \frac{\p_2-\p_1}{2}\bigg|\frac{\partial z_n}{\partial t}\bigg|(\R \times (0,t)\times \T)\end{split}
   \] 

   \noindent
   and, taking the liminf in \eqref{eq: dis4}, recalling once again \eqref{eq: hweak2}, we infer our desired condition \eqref{eq: genweakhis}.
\end{proof}

\section{Uniqueness}\label{S6}
We finally show that the solution constructed in Section \ref{S5} is the unique entropy solution to \eqref{eq: hpde}. Following \cite{AVH1}, by entropy solution here we mean a pair $(u,w)$ such that $w=\int_\T z \, d\p$ and the following entropy condition is satisfied \begin{equation}\label{eq: entropycond}
        \int_{\R}\int_0^T \left[|u-k|+\int_\T |z-\hat{z}|\, d\p\right] \phi_t+|u-k|\phi_x \,dt\,dx \geq0
    \end{equation} for every positive test function $\phi \in C^{\infty}( \R \times [0,+\infty))$ with compact support in $\R\times (0,+\infty)$, every $k\in \R$ and every function $\hat{z}:\T \to \{-1,+1\}, \hat{z}\in \tilde{\mathcal S}$ such that $(k,\hat{z}(\p))\in\LL_\p.$ \par 
Let us first check that the solution constructed for the Riemann problem is coherent with this definition.

\begin{prop}\label{prop: RPentropy}
    Consider $(u,w)$ with $w=\int_\T z \, d\p$ solution to the Riemann problem \eqref{eq: hpde}, \eqref{eq: datiR}, then it is an entropy solution. 
\end{prop}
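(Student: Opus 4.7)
The plan is to verify the integral inequality \eqref{eq: entropycond} by exploiting the explicit structure of the Riemann solution from Section \ref{S4}. Recall that $u$ is continuous on $\R\times(0,T]$, being a finite concatenation of constant states and rarefaction fans centered at the origin, whereas $z$ (and hence $w$) may exhibit a stationary vertical jump along $\{x=0\}$ where nonetheless $u(0^-,t)=u(0^+,t)$. Set $\eta(u,z):=|u-k|+\int_\T|z-\hat z(\p)|\,d\p$ and $q(u):=|u-k|$; integration by parts shows that \eqref{eq: entropycond} is equivalent to the distributional inequality $\eta_t+q_x\leq 0$ on $\R\times(0,T)$.

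For the region away from $\{x=0\}$, I would use the PDE in the form $u_t+u_x=-w_t=-\int_\T z_t(\cdot;\p)\,d\p$. Multiplying by $\operatorname{sgn}(u-k)$ and formally carrying $\operatorname{sgn}(\cdot-\hat z(\p))$ inside the time derivative of $|z-\hat z(\p)|$ yields
\begin{equation*}
\eta_t+q_x=\int_\T\bigl[\operatorname{sgn}(z-\hat z(\p))-\operatorname{sgn}(u-k)\bigr]\,z_t(\cdot;\p)\,d\p,
\end{equation*}
to be read in the sense of measures, since for a.e.\ $x$ and every $\p$ the map $t\mapsto z(x,t;\p)$ is a $\pm 1$-valued BV function jumping only when $u$ crosses $\p_1$ or $\p_2$. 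I would then verify, by inspecting the four cases obtained by crossing the switch direction with $\hat z(\p)=\pm 1$, that the bracketed integrand is $\leq 0$ at every switch, for a.e.\ $\p\in\T$ (the null set $\{\p:\p_1=k\text{ or }\p_2=k\}$ being discarded). For instance, at an up-switch $u=\p_2$ with $\hat z(\p)=-1$, the compatibility $(k,-1)\in\LL_\p$ forces $k<\p_2=u$, so $\operatorname{sgn}(u-k)=+1$, matching the effective $+1$ contributed by $|z-\hat z|$ jumping from $0$ to $2$; the three remaining subcases are strictly analogous. The down-switch case at $u=\p_1$ is handled symmetrically, bearing in mind that there $z_t$ is a negative measure so the required sign is reversed.

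For the stationary jump along $\{x=0\}$, the \RH relation \eqref{eq: hrhcond} forces shock speed $s=0$ since $u_-=u_+=u_l$, so the distributional jump contribution to $\eta_t+q_x$ along the shock is $(-s[\eta]+[q])\delta_{\{x=0\}}=[q]\delta_{\{x=0\}}$, and $[q]=|u_+-k|-|u_--k|=0$ by the continuity of $u$ across the line. Thus no extra shock-admissibility constraint arises, regardless of the jump $[\eta]=\int_\T(|z^*-\hat z|-|z_l-\hat z|)\,d\p$ produced by the change of hysteresis state. Combining with the previous step yields $\eta_t+q_x\leq 0$ in $\D'(\R\times(0,T))$, which is precisely \eqref{eq: entropycond}.

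The main technical obstacle is the measure-theoretic justification of the first step: for a single $\p$, the measure $z_t(\cdot;\p)$ is concentrated on a one-dimensional switch curve in $(x,t)$, while $\int_\T|z-\hat z(\p)|\,d\p$ is continuous in $(x,t)$ thanks to the averaging over $\p$. Fubini then allows one to exchange the $\T$-integration with the integration against the test function $\phi$, so that the a.e.-$\p$ pointwise sign of $\bigl[\operatorname{sgn}(z-\hat z(\p))-\operatorname{sgn}(u-k)\bigr]$ on each switch curve is promoted to the global sign required for \eqref{eq: entropycond}.
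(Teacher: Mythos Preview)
Your argument is correct and gives a cleaner, Kruzhkov-style alternative to the paper's proof. The paper proceeds by an explicit three-way case split on the position of $k$ relative to $u_l,u_r$: in each case it identifies the sign of $|u-k|$ on the rarefaction region, tracks geometrically on the Preisach triangle how $\int_\T|z-\hat z|\,d\p$ changes (obtaining either $(\int_\T|z-\hat z|\,d\p)_t=-w_t$ or $\leq w_t$), and then integrates by parts region by region. Your approach instead reduces everything to the single identity $\eta_t+q_x=\int_\T\bigl[-\hat z(\p)-\operatorname{sgn}(u-k)\bigr]\,z_t(\cdot;\p)\,d\p$ (this is what your ``$\operatorname{sgn}(z-\hat z)$'' really means, since $|z-\hat z|=1-\hat z\,z$ for $z,\hat z\in\{-1,+1\}$) and a $\p$-by-$\p$ sign check driven directly by the compatibility constraint $(k,\hat z(\p))\in\LL_\p$. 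This makes the mechanism transparent: the constraint forces $k<\p_2$ when $\hat z(\p)=-1$ and $k>\p_1$ when $\hat z(\p)=+1$, which is exactly what is needed at up- and down-switches respectively. One small remark: the two ``diagonal'' subcases (up-switch with $\hat z=+1$, down-switch with $\hat z=-1$) are not determined by compatibility alone, but there the bracket has a definite sign regardless of $\operatorname{sgn}(u-k)$, so the conclusion still holds; you may want to say this explicitly rather than calling all four subcases ``strictly analogous''. The paper's route, by contrast, is more elementary (no measure products) and makes visible when equality holds, which is used in the subsequent remark.
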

\begin{proof}
    Suppose $u_l>u_r$. The case $u_l<u_r$ is similar. We have the following cases:\par
    \textbf{Case }$\boldsymbol{ u_r<u_l \leq k}$: For $x>0$, we have that $|u-k|$ is equal to $-u+k$ for almost every $(x,t)$ because $u$ consists of a family of rarefaction waves connecting values from $u_r$ to $u_l$ in a monotone way (see Figure \ref{fig: rarefazioni}). The configuration $z$ passes instead from the initial configuration $z_r$ to a configuration $z^*$, according to that monotone input $u$. By doing so we have a region $A\subset \T$ in the strip $([-a, u_l]\times [u_r,u_l]) \cap \T$ where, along the monotone evolution of $u$, $z$ changes from $-1$ to $+1$ (in Figure \ref{fig: lemmavar2}-right, $A$ is $R_1\cap\{\rho\ \mbox{``above the graph"}\}$). Since $(k,\hat{k}(\p))\in \LL_\p$ we know that in that strip $\hat{z}$ is certainly $+1$, hence in $A$, $|z-\hat{z}|$ changes from $2$ to $0$, and hence (recall $w=\int_\T z \,d\p$) 
    \begin{equation}\label{eq: -wt}
    \left(\int_\T |z-\hat{z}|\, d\p\right)_t = -w_t.
    \end{equation}
    For $x<0$ both $u$ and $z$ are constant. So by splitting the integral on the left hand side of \eqref{eq: entropycond} into the integral on the two regions $x<0, x>0$, by integrating by parts, by recalling that $u$ is continuous on $\{x=0\}$, and by \eqref{eq: -wt} we conclude that \eqref{eq: entropycond} holds as an equality.\par   
    \textbf{Case }$\boldsymbol{k\leq u_r<u_l}$: For $x<0$ both $u$ and $z$ are constant. For $x>0,$ $|u-k|=u-k$ and again $z$ changes from $-1$ to $+1$ in $A\subset \T$, hence also $|z-\hat{z}|$ changes only in $A$. However, now we only know that certainly $\hat{k} = -1$ in the region $\p_2 \geq k$ so $|z-\hat{z}|$ passes from $0$ to $2$ in the region $A\cap(\p_2 \geq k)$. In the rest of $A$ either it remains constant or it passes from $0$ to $2$ or from $2$ to $0$, in any case we deduce the bound \begin{equation}\label{eq: proofpropcaso2}
       \left(\int_\T |z-\hat{z}|\, d\p\right)_t \leq w_t.
   \end{equation} Then starting from the left hand side of \eqref{eq: entropycond}, splitting the integral over $x<0,x>0$, integrating by parts and using this last inequality \eqref{eq: proofpropcaso2}, we get \eqref{eq: entropycond}.\par
   \textbf{Case }$\boldsymbol{ u_r<k<u_l}$: For $x<0$, both $u,z$ are constant so we focus on $x>0,$ where again, we have that $z$ passes from $-1$ to $+1$ in $A\subset \T$. We need to subdivide the integral into two parts: one where $u>k$ and one where $u<k$. As we have a rarefaction wave type solution (see Figure \ref{fig: rarefazioni}-right) we have that the two regions are divided by the set $\{u=k\}$ which is either a half line or it lies between two half lines. Below $\{u=k\}$ we have $|u-k|=-u+k$ and the change of $z$ occurs only in $A \cap \{\p_2 \leq k\}$ on which $\hat{z}=-1$. This means again \[\left(\int_\T |z-\hat{z}|\,d\p\right)_t = -w_t.\] In the region above instead $|u-k|=u-k$ and \[\left(\int_\T |z-\hat{z}|\, d\p\right)_t\leq w_t.\] So by subdividing the integral into these parts, using the integration by parts formula and the last two equations we get \eqref{eq: entropycond}. Notice that both $|u-k|$ and $\int_\T |z-\hat{z}|\,d\p$ are continuous functions for $x>0$ hence the boundary terms in the integration by parts formula on $\{u=k\}$ cancel out.  
\end{proof}

\begin{rmk}
    Note that, even if the solution to our Riemann problem is just the union of rarefaction waves, without shocks, \eqref{eq: entropycond} may be strict, despite to the classical scalar case, where, for rarefaction waves only, the entropy condition always gives $0.$ 
\end{rmk}

We now prove the main theorem of this section.

\begin{teo}
    Given the initial conditions $u_0\in L^1(\R)\cap BV(\R),z_0\in L^1(\R;\tilde{\mathcal S}_L)\cap BV(\R;\tilde{\mathcal S}_L)$, let $(u,w)$ with $w=\int_\T z\, d\p$ be the solution to the PDE constructed via the wave-front tracking method. Then such couple is an entropy solution. 
\end{teo}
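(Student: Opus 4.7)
The plan is to verify an approximate entropy inequality for the wave-front tracking approximations $(u_n,z_n)$ with a defect that vanishes as $n\to\infty$, and then pass to the limit using the $L^1$ convergences established in the proof of Theorem \ref{teo: esistenza}. Fix $k\in\R$, $\hat z\in\tilde{\mathcal S}$ with $(k,\hat z(\p))\in\LL_\p$ for a.e.\ $\p$, and a nonnegative $\phi\in C^\infty_c(\R\times(0,+\infty))$. Define
\[
I_n(\phi):=\int_\R\int_0^T\Bigl[\Bigl(|u_n-k|+\int_\T|z_n-\hat z|\,d\p\Bigr)\phi_t+|u_n-k|\phi_x\Bigr]dt\,dx,
\]
and $I(\phi)$ analogously for $(u,z)$. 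The goal is to show $I_n(\phi)\ge -\varepsilon_n$ with $\varepsilon_n\to 0$ and $I_n(\phi)\to I(\phi)$, which together imply $I(\phi)\ge 0$.

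The convergence $I_n(\phi)\to I(\phi)$ follows directly from Theorem \ref{teo: esistenza}: $|u_n-k|\to|u-k|$ in $L^1_{\mathrm{loc}}$ because $u_n\to u$ in $L^1$, while
\[
\int_0^T\!\!\int_\R\Bigl|\int_\T\bigl(|z_n-\hat z|-|z-\hat z|\bigr)d\p\Bigr|dx\,dt\le\int_0^T\!\!\int_\R d(z_n,z)\,dx\,dt\to 0,
\]
and the compact support of $\phi$, together with its boundedness, controls the rest.

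For the lower bound, we use that $(u_n,z_n)$ is piecewise constant on a polygonal decomposition of $\R\times(0,T)$ with finitely many discontinuity curves (Lemma \ref{lemma: ndisc}). Integrating by parts on each polygonal region, $I_n(\phi)$ becomes a finite sum of line integrals along the fronts, each contribution being the entropy production of the corresponding jump, which by the Rankine--Hugoniot condition \eqref{eq: hrhcond} relates the speed $s'$ to the jumps of $u_n$ and $w_n$. Purely vertical $z_n$-fronts, along which $u_n$ is continuous, contribute nonnegatively by the case-by-case analysis already carried out in the proof of Proposition \ref{prop: RPentropy}. The remaining fronts are the $2^{-n}$-sized shocks that replace the exact rarefaction fans of the Riemann problems solved at $t=0$ and at all subsequent interactions. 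Comparing Riemann-problem by Riemann-problem with the exact solution from Section \ref{S4}, Proposition \ref{prop: RPentropy} guarantees that the exact fan satisfies the entropy inequality, and each discrete $2^{-n}$-shock in its piecewise-constant approximation introduces an entropy defect of order $O(2^{-2n})$ uniformly in time (the analogue of the classical Oleinik-type estimate for a piecewise-concave flux, including the contribution of $\int_\T|z_n-\hat z|\,d\p$, whose jump across a front is controlled by the Lebesgue measure of the portion of the Preisach plane swept by the monotone variation of $u_n$). Since each rarefaction is split into $O(2^n)$ such shocks and the total number of Riemann problems solved over $[0,T]$ is finite by Lemma \ref{lemma: ndisc}, the cumulative defect satisfies $\varepsilon_n=O(2^{-n})\to 0$. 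Combining the two steps gives $I(\phi)=\lim_n I_n(\phi)\ge -\lim_n\varepsilon_n=0$.

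The main obstacle is precisely the estimate of the entropy defect at a single $2^{-n}$-shock lying inside the approximation of a rarefaction fan. Unlike in the classical convex-flux scalar case, the entropy here contains the nonlocal hysteresis term $\int_\T|z_n-\hat z|\,d\p$, and its variation across the small shock must be tracked using the structural description of the Preisach output as the sweep of the graph $\xi_{z_n}$ on the Preisach plane developed in Section \ref{S2}, together with the admissibility constraint $(k,\hat z(\p))\in\LL_\p$, mirroring the three-case analysis ($u_l\le k$, $k\le u_r$, $u_r<k<u_l$) carried out in the proof of Proposition \ref{prop: RPentropy}.
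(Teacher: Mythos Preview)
Your overall strategy---prove an approximate entropy inequality for $(u_n,z_n)$ with a vanishing defect $\varepsilon_n$ and then pass to the $L^1$ limit---is exactly the paper's, and your convergence argument $I_n(\phi)\to I(\phi)$ is correct. The gap is in the defect estimate.

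You claim that \emph{each} $2^{-n}$-shock contributes an entropy defect of order $O(2^{-2n})$, by analogy with the classical Oleinik-type estimate. But you do not prove this, and in the hysteresis setting it is not automatic: the entropy contains the nonlocal term $\int_\T|z_n-\hat z|\,d\p$, whose jump across a front is only controlled by $O(2^{-n})$ (the area swept on the Preisach plane), so the crude bound at a single front is $O(2^{-n})$, not $O(2^{-2n})$. Combined with the total number of fronts, which is of order $\mathrm{Var}(u_0^{(n)})\cdot 2^{n}=O(2^{n})$ (the ``finite'' from Lemma~\ref{lemma: ndisc} is \emph{not} uniform in $n$), the crude bound yields a defect of order $O(1)$, which does not vanish. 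You yourself identify this refined per-shock estimate as ``the main obstacle'' and leave it open; as written, the argument does not close.

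The paper avoids this obstacle altogether by classifying the fronts according to the position of $k$ relative to the one-sided values $u_n(x_i(t)\pm,t)$. Vertical fronts contribute exactly zero (since $\Delta u_i=0$ and $x_i'=0$). For fronts with $k\le\min\{u_n^+,u_n^-\}$ or $k\ge\max\{u_n^+,u_n^-\}$, an argument parallel to the first two cases of Proposition~\ref{prop: RPentropy} together with the Rankine--Hugoniot relation \eqref{eq: hrhcond} shows the contribution is \emph{nonnegative}. Only the ``bad'' fronts with $k$ strictly between $u_n^+$ and $u_n^-$ can produce a negative term, and for those the crude $O(2^{-n})$ bound suffices. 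The decisive observation is that along each rarefaction fan the values are monotone, so the condition $u_n^+<k<u_n^-$ (or the reverse) is met by \emph{at most one} front per fan; the number of fans is $O(n)$ by the construction of $u_0^{(n)},z_0^{(n)}$, whence the total defect is $O(n\,2^{-n})\to 0$. In short, rather than sharpening the per-shock estimate to $O(2^{-2n})$, the paper sharpens the \emph{count} of fronts that can actually contribute negatively.
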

\begin{proof}
    Let us fix $n\in\N$, $k\in\R$, a configuration $\hat{k}$ as in the definition of entropy condition and a positive test function $\phi$. Let $(u_n,w_n)$ with $w_n=\int_\T z_n \, d\p$ be the solution to the approximating Cauchy problem with initial data $u^{(n)}_0,z_0^{(n)}$ constructed as in the proof of the previous Theorem \ref{teo: esistenza}. 
    We are going to prove that $(u_n,w_n)$ in an ``almost entropy condition" in the sense that \eqref{eq: entropycond} is satisfied up to an infinitesimal error in the right-hand side (see \eqref{eq: finaleentrop}). Passing to the limit as $n\to+\infty$ we will get the conclusion.
    
    Both $(u_n,w_n)$ and $z_n$ are constant on a finite number of triangles $T_j$. For each fixed time $t$ we have a family of points $x_i(t)$, $i=1,\dots, N(t)$ such that $x_i(t)< x_{i+1}(t)$ on which $u_n$ or $w_n$ have a jump point. Integrating by parts, the left side of \eqref{eq: entropycond} can be rewritten as
    \begin{equation}\label{eq: sommadisc}
        \begin{split}
        \sum_{i=1}^{N(t)}&\int_{0}^{T} \left[\left( \Delta u_i(t;k)+\Delta z_i(t;\hat{z}) \right) x_i'(t) - \Delta u_i(t;k)\right] \phi(x_i(t),t) \,dt
        \end{split} 
    \end{equation}
    where \[\Delta u_i(t;k):=|u(x_i(t)+,t)-k|-|u(x_i(t)-,t)-k|,\]
    \[\Delta z_i(t;\hat{z}):=\int_\T |z(x_i(t)+,t;\p)-\hat{z}(\p)|\,d\p-\int_\T |z(x_i(t)-,t;\p)-\hat{z}(\p)|\,d\p.\]
    We analyze all the possible different kind of discontinuities:  \par 
    
    $\boldsymbol{x'_i(t)=0:}$ vertical discontinuities which are only discontinuities for $z$ and $w$, hence, $\Delta u_i=0$, which means that they do not contribute in the sum in \eqref{eq: sommadisc}. \par
    $\boldsymbol{u(x_i(t)+,t)<u(x_i(t)-,t)\leq k:}$ in this case $\Delta u_i(t;k)=-u(x_i(t)+)+u(x_i(t)-)$ and by arguing as in the first case of the proof of Proposition \ref{prop: RPentropy} we can check that $\Delta z_i(t;\hat{z})= -w(x_i(t)+)+w(x_u(t)-)$. Now, since $x'(t)$ satisfies the \RH condition \eqref{eq: hrhcond} then also this contribution of these type of discontinuities in the sum in \eqref{eq: sommadisc} is equal to $0$.\par $\boldsymbol{k\leq u(x_i(t)+,t)<u(x_i(t)-,t)}$: here instead $\Delta u_i(t;k)=u(x_i(t)+)-u(x_i(t)-)$ and by doing the same reasoning as in the second case of the proof of Proposition \ref{prop: RPentropy} we can check that $\Delta z_i(t;\hat{z})\geq w(x_i(t)+)-w(x_u(t)-)$. Now, since $x'(t)$ satisfies the \RH condition \eqref{eq: hrhcond} it is easy to check that the contribution in \eqref{eq: sommadisc} of discontinuities belonging to this case  is positive.\par$\boldsymbol{ u(x_i(t)-,t)<u(x_i(t)+,t)\leq k}$ or $\boldsymbol{k\leq u(x_i(t)-,t)<u(x_i(t)+,t):}$ these cases are similar to the previous two ones and they at most give a positive  contribution to the sum in \eqref{eq: sommadisc} \par $\boldsymbol{u(x_i(t)+,t)<k<u(x_i(t)-,t)}$ or
    $\boldsymbol{u(x_i(t)-,t)<k<u(x_i(t)+,t):}$ discontinuities that belong to this case are the ones that contribute to the entropy error, indeed their contribution to the sum in \eqref{eq: sommadisc} may be negative. However we can estimate $\Delta u_i(t;k) \geq -2^{-n}$, $\Delta z_i(t;\hat{z}) \geq -C\, 2^{-n}$, $0\leq x'_i(t) \leq 1$ hence \begin{equation}
        \left( \Delta u_i(t;k)+\Delta z_i(t;\hat{z}) \right) x_i'(t) - \Delta u_i(t;k) \geq - \tilde{C}\, 2^{-n}
    \end{equation} for some $\tilde{C}\geq0$. \par
    Then by the previous analysis of the different kind of discontinuities, if we consider only the indices $i_j$ with $j=1,\dots, \tilde{N}(t)$ for which we might have negative contribution we can conclude from \eqref{eq: sommadisc} that, for some $\tilde{K}\geq 0$: \begin{equation}\begin{split}\label{eq: finaleentrop}
        \int_{0}^{+\infty} &\int_{-\infty}^{+\infty} \left[|u_n-k|+\int_\T |z_n-\hat{z}|\, d\p\right] \phi_t+|u_n-k|\phi_x \,dx\,dt \\ &\geq - \tilde{C}\, 2^{-n}\sum_{j=1}^{N(t)} \int_0^{+\infty } \phi(x_i(t),t)\,dt \geq - K \, 2^{-n} \, \tilde{N}(t) \geq -\tilde{K} \, n 2^{-n}
    \end{split}
    \end{equation} where the last inequality is due to the fact that  $``\boldsymbol{u(x_i(t)+,t)<k<u(x_i(t)-,t)}$ or
    $\boldsymbol{u(x_i(t)-,t)<k<u(x_i(t)+,t)}"$ can only occur in one discontinuity per rarefaction wave, so at time $0$ that number of occurrence is of the same order of $n$ (see the construction of $u_0^{(n)},z_0^{(n)}$ in the the proof of Theorem \ref{teo: esistenza}), and it may only decrease as time increases as stated in Lemma \ref{lemma: ndisc}. \par
    Taking the limit in \eqref{eq: finaleentrop} we conclude the proof. 
\end{proof}

As a consequence of the next theorem, we then deduce that the solution constructed via the wave-front tracking method is also the only entropy solution.

\begin{teo}
    Consider the Cauchy problems with initial conditions $u_0^1,z_0^1$ and $u_0^2,z_0^2$ respectively, where $u_0^i\in L^1(\R)\cap BV(\R)$, $z_0^i \in L^1(\R;\tilde{\mathcal S}_L)\cap BV(\R; \tilde{\mathcal S}_L) $. Let us denote by $(u_1,w_1), w_1=\int_\T z_1\, d\p$ and $(u_2,w_2), w_2=\int_\T z_2\,d\p$ two entropy solutions to the respective problems. Then it holds that \begin{equation}
        \int\limits_{-\infty}^{+\infty} (|u_1-u_2|(x,t)+\int\limits_\T|z_1-z_2|\,d\p\,(x,t))~dx \leq \int\limits_{-\infty}^{+\infty}(|u_0^1-u_0^2|+\int\limits_\T|z_0^1-z_0^2|\,d\p)~dx, 
    \end{equation} for almost every $t\in [0,T]$.
\end{teo}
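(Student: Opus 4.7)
The plan is to carry out the classical Kruzhkov doubling of variables, adapted here to the hysteretic setting. First I would write the entropy condition \eqref{eq: entropycond} for $(u_1,w_1)$ using the choice $k=u_2(y,s)$ and $\hat{z}(\cdot)=z_2(y,s;\cdot)$; this is admissible because condition ii) of Definition \ref{def: weak} guarantees $|z_2(y,s;\p)|=1$ and $(u_2(y,s),z_2(y,s;\p))\in\bar{\LL}_\p$ for a.e.\ $y,s,\p$, so that $(k,\hat{z}(\p))\in\bar{\LL}_\p$. Symmetrically, I would write the entropy condition for $(u_2,w_2)$ with $k=u_1(x,t)$ and $\hat{z}(\cdot)=z_1(x,t;\cdot)$. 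Summing the two inequalities produces the symmetric Kruzhkov relation where $|u_1-u_2|$ and the hysteretic comparator $\int_\T|z_1-z_2|\,d\p$ appear on both sides thanks to the symmetry $|z_1-z_2|=|z_2-z_1|$.

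Next I would plug in a test function of the doubled form
\[
\phi(x,t,y,s)=\psi\!\left(\tfrac{x+y}{2},\tfrac{t+s}{2}\right)\delta_\varepsilon(x-y)\,\delta_\varepsilon(t-s),
\]
with $\psi\in C^\infty_c(\R\times(0,T))$ nonnegative and $\delta_\varepsilon$ a standard mollifier. Integrating the summed inequality over $(x,t,y,s)$ and sending $\varepsilon\to 0^+$, using that $u_i\in C^0([0,T];L^1(\R))$ and $z_i\in C^0([0,T];L^1(\R;\tilde{\mathcal S}_{L'}))$ from Theorem \ref{teo: esistenza}, yields the Kruzhkov-type inequality
\[
\int_0^T\!\!\int_\R\!\left[|u_1-u_2|+\int_\T|z_1-z_2|\,d\p\right]\psi_t+|u_1-u_2|\psi_x\,dx\,dt\ge 0.
\]

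Then I would exploit the fact that the characteristic speed in the equivalent formulation \eqref{eq: flux} is $f(u)=1/g(u)\in(0,1]$, so propagation speed is bounded by $1$. I would choose $\psi$ approximating the indicator of a truncated backward cone $\{(x,t):|x|\le R+(t_2-t),\ t_1\le t\le t_2\}$, mollified in time. The contribution on the slanted lateral sides is nonpositive (since $|u_1-u_2|$ appears with coefficient $\psi_t+\psi_x\le 0$ there), whereas the top and bottom contributions, after passing to the limit in the time mollifier, deliver the desired inequality on the cone's cross-sections. Finally, letting $R\to+\infty$ (using $u_i\in L^1(\R)$ and the analogous integrability of $z_i$) and then $t_1\to 0^+$, $t_2\to t$, gives the global contraction estimate in the statement.

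The main obstacle is the careful limit passage in the hysteretic term under doubling: unlike the standard scalar case where the Kruzhkov comparator is a scalar constant $k$, here $\hat{z}$ is a configuration on the Preisach plane, and the admissible choice $\hat{z}=z_i(\cdot,\cdot;\cdot)$ must be mollified together with $u_i$ while preserving the compatibility $(k,\hat{z}(\p))\in\bar{\LL}_\p$ at the doubled level. This is handled by Fubini between the Preisach variable $\p\in\T$ and the physical variables $(x,t)$, combined with the $L^1(\R;\tilde{\mathcal S}_{L'})$ continuity in time already exploited in Theorem \ref{teo: esistenza}, which lets one commute the $\T$-integration with the mollifier and take the $\varepsilon\to 0$ limit termwise.
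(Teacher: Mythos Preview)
Your proposal is correct and follows exactly the approach the paper intends: the paper's own proof consists of a single sentence referring to Kruzhkov's doubling of variables and to Visintin's adaptation in \cite{AVH1} for the Relay case, and your sketch fills in precisely those details for the Preisach setting. One minor remark: you invoke the $C^0([0,T];L^1)$ regularity of $z_i$ ``from Theorem \ref{teo: esistenza}'', but the comparison statement is for \emph{arbitrary} entropy solutions, not only the wave-front tracking one; this is harmless, since Kruzhkov's method only needs $L^1_{loc}$ integrability and yields the estimate for a.e.\ $t$, as stated.
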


\begin{proof}
    Adapting the standard method of doubling variables by Kruzkov \cite{Krukov}, in \cite{AVH1} an analogous result is proven for the case of delayed-Relay hysteresis. The adaptation to our case with Preisach hysteresis is not difficult. 
\end{proof}


\appendix
\section{BV functions with values in metric spaces}\label{A1}

In this appendix we state the main properties we used in the article about functions $f: \R \to E$ of bounded variation and with $E$ a metric space with some compactness properties. The proofs are omitted but they can be done in a similar way, as in the case $E=\R$, by exploiting compactness properties of $E.$ For these proofs we refer to \cite[Chapter~3]{AB3} and to \cite{AFP} for a general reference to metric space valued BV functions.\par 
Suppose we have a metric space $(E,d)$ and that it is complete. Consider a function $z: \R \to E$ and fix a point $z_v \in E$ as ``zero".

\begin{defi}\label{def: zL1BV}
    We say that $z\in L^1(\R;E)$ if \begin{equation}
        \int_{\R} d(z(x),z_v)~dx < +\infty.
    \end{equation}\par
    We say that $z\in BV(\R;E)$ if there exists a representative $v$ of it, i.e. $z=v$ almost everywhere, such that \begin{equation}\label{eq: bv}
    \sup\left\{ \sum_{i=0}^{N-1} d(v(x_i),v(x_{i+1})) \quad \bigg| \quad -\infty=x_0<x_1<\dots<x_N=\infty\right\} < +\infty.
    \end{equation}
\end{defi}
The total variation of $z$ is defined as the minimum value of \eqref{eq: bv} among all representatives of $z$. Such minimum is achieved and from now on when we say $z\in BV$ we will assume that $z$ is one of the representatives with the smallest total variation (see \cite{AFP}).

\begin{lem}\label{lemma: BV1}
    Let $z\in BV(\R,E)$ where $E$ is a complete metric space. Then for every $x\in \R$, the left and right limits of $z$ at $x$ exist.
\end{lem}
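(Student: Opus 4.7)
The plan is to reduce the existence of one-sided limits of $z$ to the existence of one-sided limits of a real-valued monotone function, and then to use completeness of $E$ to promote a Cauchy condition into convergence.

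First I would introduce the cumulative variation function $V:\R\to[0,+\infty)$ defined by
\[
V(x):=\sup\left\{\sum_{i=0}^{N-1} d(z(x_i),z(x_{i+1}))\ \Big|\ -\infty<x_0<x_1<\cdots<x_N=x\right\},
\]
which is well-defined and bounded above by the total variation of $z$. A standard additivity argument on partitions shows that $V$ is non-decreasing on $\R$, and that for every $a<b$ one has $d(z(a),z(b))\le V(b)-V(a)$. Because $V$ is a bounded monotone real function, its left and right limits exist at every point $x\in\R$; denote them $V(x^-)$ and $V(x^+)$.

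Next I would fix $x\in\R$ and show that $z$ admits a right limit at $x$. Given $\varepsilon>0$, choose $\delta>0$ such that $V(y)-V(x^+)<\varepsilon$ whenever $x<y<x+\delta$. Then for any $x<y_1<y_2<x+\delta$ the inequality $d(z(y_1),z(y_2))\le V(y_2)-V(y_1)<\varepsilon$ shows that for every sequence $y_n\downarrow x$ the sequence $(z(y_n))$ is Cauchy in $(E,d)$. By completeness of $E$, the Cauchy sequence converges, and the limit is independent of the chosen sequence by a standard interlacing argument (merging two sequences into one still yields a Cauchy sequence, hence a single limit). This produces a well-defined right limit $z(x^+):=\lim_{y\downarrow x}z(y)$. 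The left limit is obtained by the symmetric argument, using $V(x^-)$ instead of $V(x^+)$.

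I do not expect a genuine obstacle: the only subtle point is that, unlike in the real-valued setting where monotonicity of $z$ itself gives the one-sided limits directly, here one must pass through the scalar variation $V$ and then invoke completeness of $E$ to convert the Cauchy estimate into actual convergence. Provided the representative of $z$ is chosen so that \eqref{eq: bv} is finite (which is built into Definition \ref{def: zL1BV} as used in the statement), the argument is routine and does not require any additional structure on $E$ beyond completeness.
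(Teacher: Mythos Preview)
Your argument is correct and is precisely the standard reduction to the real-valued case that the paper alludes to (the paper omits the proof, noting only that it proceeds as for $E=\R$). Defining the cumulative variation $V$, using its monotonicity to get one-sided limits, and then invoking completeness of $E$ to upgrade the resulting Cauchy condition to convergence is exactly the intended route; no compactness of $E$ is needed for this particular lemma.
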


It can also be shown that right or left continuous representatives of $z$ minimize \eqref{eq: bv}, and thus they are called ``good" representatives and can be chosen in order to compute the total variation of $z$.

\begin{lem}\label{lemma: BV2}
    Let $z\in BV(\R,E)$ where $E$ is a complete metric space. Then for every $\varepsilon>0$, there exists a piecewise constant function $v$ such that \begin{equation}
        Var(v)\leq Var(z) \quad \text{and} \quad ||d(z,v)||_{L^\infty(\R)}\leq \varepsilon.
    \end{equation}
    If $z\in L^1(\R;E)$, then $v$ can be chosen in a way such that it also holds
    \begin{equation}
        ||d(z,w)||_{L^1(\R)}\leq \varepsilon.
    \end{equation}
\end{lem}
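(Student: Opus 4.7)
The plan is to approximate $z$ by a step function whose jump set is a carefully chosen finite partition, then balance the mesh size against the variation-increment so that both estimates hold simultaneously.

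First I would replace $z$ with its right-continuous representative (whose existence comes from Lemma \ref{lemma: BV1}) and, without relabelling, work with this version; this does not increase $Var(z)$. I would introduce the cumulative variation function $V(x):=\sup\{\sum_{i=0}^{N-1}d(z(x_i),z(x_{i+1})) : x_0<\dots<x_N=x\}$, which is bounded, nondecreasing, right-continuous, satisfies $V(-\infty)=0$, $V(+\infty)=Var(z)$, and has at most finitely many jumps greater than any fixed threshold. A standard Cauchy argument, using $d(z(x),z(y))\le|V(x)-V(y)|$ together with completeness of $E$, shows that the limits $z(\pm\infty)$ exist in $E$.

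For the first assertion, I would pick a partition $-\infty=x_0<x_1<\dots<x_{N-1}<x_N=+\infty$ such that $V(x_{i+1}^-)-V(x_i)\le\varepsilon$ on every interval $[x_i,x_{i+1})$ (with the convention $V(x_0)=0$, $V(x_N^-)=Var(z)$); this is possible since $V$ is bounded and has only finitely many jumps exceeding $\varepsilon/2$, which I force to be partition points. Define $v(x)=z(x_i)$ on $[x_i,x_{i+1})$, with $z(x_0):=z(-\infty)$. Then for $y\in[x_i,x_{i+1})$,
\[
d(z(y),v(y))=d(z(y),z(x_i))\le V(y)-V(x_i)\le V(x_{i+1}^-)-V(x_i)\le\varepsilon,
\]
which gives the $L^\infty$ bound. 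Telescoping along the partition,
\[
Var(v)=\sum_{i=0}^{N-1}d(z(x_i),z(x_{i+1}))\le Var(z)
\]
by the very definition of total variation applied to the sequence $z(x_0),\dots,z(x_N)$.

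For the second assertion I exploit that $z\in L^1(\R;E)$ forces $z(\pm\infty)=z_v$, since otherwise $d(z,z_v)$ would have a non-zero limit at some infinity and fail to be integrable. I modify the construction by using a finite partition $x_0<x_1<\dots<x_N$ and setting $v=z_v$ on $(-\infty,x_0)\cup[x_N,+\infty)$ and $v(x)=z(x_i)$ on $[x_i,x_{i+1})$. The jumps of $v$ produce the polygonal length $d(z_v,z(x_0))+\sum_{i=1}^{N-1}d(z(x_{i-1}),z(x_i))+d(z(x_{N-1}),z_v)$, which, since $z_v=z(\pm\infty)$, is bounded by $Var(z)$ by using the partition $-\infty,x_0,\dots,x_{N-1},+\infty$. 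I would choose $x_0$ sufficiently negative and $x_N$ sufficiently positive so that simultaneously $V(x_0)\le\varepsilon$, $Var(z)-V(x_N^-)\le\varepsilon$ and $\int_{(-\infty,x_0)\cup(x_N,+\infty)}d(z,z_v)\,dx\le\varepsilon/2$; the first two give the $L^\infty$ and $L^1$ tail estimates, since $d(z(x),z_v)\le V(x_0)$ on $(-\infty,x_0)$ and analogously on the right. Then I subdivide $[x_0,x_N]$ into intervals on each of which $V(x_{i+1}^-)-V(x_i)\le\delta$ with $\delta:=\min\bigl(\varepsilon,\varepsilon/(2(x_N-x_0))\bigr)$; summing $\int_{x_i}^{x_{i+1}}d(z,z(x_i))\,dx\le(x_{i+1}-x_i)\delta$ yields at most $\varepsilon/2$ on the interior.

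The main obstacle is the bookkeeping that guarantees $Var(v)\le Var(z)$ after the two artificial jumps back to $z_v$ at the tail boundaries; this is where the identification $z(\pm\infty)=z_v$ (forced by $L^1$-integrability) is essential, as it lets the polygonal length be viewed as a legitimate variation-sum for $z$. A secondary point of care is ensuring that the big jumps of $V$ are placed at partition points so that the $L^\infty$ estimate $d(z,v)\le V(x_{i+1}^-)-V(x_i)$ is not spoiled; once this is done, the two required bounds decouple cleanly into a tail estimate (controlled by $z\to z_v$) and an interior estimate (controlled by fineness of the partition).
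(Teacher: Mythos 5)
The paper omits the proof of this lemma, citing \cite[Chapter~3]{AB3} and \cite{AFP}, so there is no in-text argument to compare against; judged on its own merits, your proof is correct and follows the standard strategy used in those references: pass to a good (right-continuous) representative, introduce the cumulative variation $V$, and build the piecewise-constant approximant by cutting $\R$ into finitely many intervals on which $V$ increases by at most a prescribed amount, placing the finitely many large jumps of $V$ at partition points so the greedy subdivision terminates. The two structural observations you make are exactly the ones that need care: that the polygonal variation of the step function is a legitimate partition-sum for $z$ (hence $Var(v)\le Var(z)$), and that $L^1$-integrability forces $z(\pm\infty)=z_v$, which is what lets the artificial return to $z_v$ on the two tails count against $Var(z)$ rather than add to it. Your balancing of $\delta=\min\bigl(\varepsilon,\varepsilon/(2(x_N-x_0))\bigr)$ against a tail-truncation of mass $\varepsilon/2$ cleanly decouples the $L^\infty$ and $L^1$ estimates. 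One small point worth making explicit: in the second construction you should also declare the jumps of $V$ in $[x_0,x_N]$ exceeding $\delta$ (not just $\varepsilon$) to be partition points, so that the greedy refinement with threshold $\delta$ actually terminates; with that remark the argument is complete.
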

Regarding the compactness result, we need not only $E$ to be complete but also compact. 
\begin{teo}\label{teo: BV3}
    Consider a sequence of functions $z_n: \R \to E$, with $E$ compact metric space, such that \begin{equation}
        Var(z_n)\leq C, \quad ||d(z_n,z_v)||_{L^\infty(\R)}\leq M. 
    \end{equation}
    Then there exists a subsequence $z_{n_k}$ and $z\in BV(\R; E)$ such that \begin{equation} \label{eq: teohelly1}
        \lim_{k\to \infty} z_{n_k}(x)=z(x), \quad Var(z)\leq C, \quad ||d(z,z_v)||_{L^\infty(\R)} \leq M.
    \end{equation}
    Moreover if it also holds that \begin{equation}
    \lim_{n\to\infty}||d(z_n,z)||_{L^1_{loc}(\R)} =0.
    \end{equation}
\end{teo}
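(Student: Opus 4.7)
The strategy is a metric-space adaptation of the classical Helly selection argument: I would combine the Helly theorem for bounded monotone real-valued functions (applied to the total variation functions $V_n$) with a diagonal extraction on a countable dense set (using compactness of $E$) and then upgrade pointwise convergence on a dense set to pointwise convergence on almost all of $\R$ by exploiting completeness of $E$.

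More concretely, for each $n$ I would set $V_n(x):=\mathrm{Var}(z_n;(-\infty,x])$, which is non-decreasing with $V_n(+\infty)\le C$. By the classical Helly theorem, pass to a subsequence (not relabelled) so that $V_n(x)\to V(x)$ for every $x\in\R$, with $V$ non-decreasing and bounded by $C$. Let $D$ be the at most countable set of discontinuity points of $V$ and fix a countable dense set $Q\subset\R$ with $D\subset Q$. By the compactness of $E$ and a standard diagonal extraction, further extract a subsequence $z_{n_k}$ such that $z_{n_k}(q)$ converges in $E$ for every $q\in Q$. The key quantitative fact I would invoke repeatedly is the inequality
\begin{equation*}
d(z_n(x),z_n(y))\le |V_n(y)-V_n(x)|,\qquad x<y,
\end{equation*}
which is immediate from the definition of $V_n$.

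For $x\in\R\setminus D$, $V$ is continuous at $x$; given $\varepsilon>0$ I pick $q,q'\in Q$ with $q<x<q'$ and $V(q')-V(q)<\varepsilon/3$, and combining the above inequality with $V_{n_k}\to V$ and the Cauchy property of $z_{n_k}(q)$ yields that $(z_{n_k}(x))$ is Cauchy in $E$. Completeness gives a limit $z(x):=\lim_k z_{n_k}(x)$; for $x\in D$ the limit already exists by the diagonal extraction. To check $\mathrm{Var}(z)\le C$, for any finite partition whose nodes lie in $\R\setminus D$ (the good-representative convention after Definition \ref{def: zL1BV} lets me ignore countably many points without loss) I pass to the limit termwise,
\begin{equation*}
\sum_i d(z(x_i),z(x_{i+1}))=\lim_k\sum_i d(z_{n_k}(x_i),z_{n_k}(x_{i+1}))\le\liminf_k \mathrm{Var}(z_{n_k})\le C,
\end{equation*}
and the bound $\|d(z,z_v)\|_\infty\le M$ passes to the limit by continuity of $d$. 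Finally, $L^1_{\mathrm{loc}}$-convergence on any $[-R,R]$ follows from pointwise convergence combined with the uniform bound $d(z_{n_k}(x),z(x))\le 2M$ via dominated convergence.

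The main obstacle I expect is the careful handling of the discontinuity set $D$ of $V$ together with the subtleties of the ``good representative'' convention: one must make sure that the extracted subsequence actually converges at these at most countably many exceptional points (which is exactly why $D$ is folded into $Q$ before the diagonal extraction) and that the limit $z$, once redefined on a negligible set if necessary, is still a $BV$-representative with the correct total variation bound. Everything else is a routine combination of classical Helly, dominated convergence, and the completeness/compactness hypotheses on $E$.
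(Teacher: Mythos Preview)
The paper does not actually prove this theorem: the appendix explicitly states that ``the proofs are omitted but they can be done in a similar way, as in the case $E=\R$, by exploiting compactness properties of $E$,'' and points to \cite[Chapter~3]{AB3} and \cite{AFP}. Your proposal is precisely the standard Helly selection argument adapted to a compact metric target (extract via the monotone variation functions $V_n$, diagonalize on a countable dense set using compactness of $E$, propagate to all continuity points of $V$ via the inequality $d(z_n(x),z_n(y))\le |V_n(y)-V_n(x)|$, and finish with dominated convergence), which is exactly the argument those references carry out in the scalar case; so your approach is correct and coincides with what the paper intends.
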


\bigskip

{\bf Acknowledgement.} The authors were partially supported by the GNAMPA-INdAM project 2025: Analisi e controllo di modelli evolutivi con fenomeni non locali.


\printbibliography

\end{document}